\newcommand{\bx}{\boldsymbol{x}}
\newcommand{\bnu}{\boldsymbol{\nu}}
\newcommand{\bX}{\boldsymbol{X}}
\newcommand{\bomega}{\boldsymbol{\omega}}
\newcommand{\kernel}{\mathcal{K}}
\newcommand{\smooth}{\mathcal{N}^{(h)}}
\DeclareMathOperator*{\argmin}{arg\,min}
\newcommand{\loss}{\mathcal{L}}
\newcommand{\lossh}{\mathcal{L}^{(h)}}
\newcommand{\lossnh}{\mathcal{L}^{(h,n)}}
\newcommand{\profilenh}{\widetilde{\mathcal{L}}^{(h,n)}}
\newcommand{\profileh}{\widetilde{\mathcal{L}}^{(h)}}
\newcommand{\btheta}{\boldsymbol{\theta}}
\newcommand{\bpsi}{\boldsymbol{\psi}}
\newcommand{\bSigma}{\boldsymbol{\Sigma}}
\newcommand{\bup}{\boldsymbol{\upsilon}^{(n)}}
\newcommand{\buptop}{\boldsymbol{\upsilon}^{(n)\top}}
\newcommand{\bUp}{\boldsymbol{\Upsilon}^{(n)}}
\newcommand{\bUptop}{\boldsymbol{\Upsilon}^{(n)}}
\newcommand{\bpi}{\boldsymbol{\pi}}
\newcommand{\bt}{\boldsymbol{t}}
\newcommand{\bmap}{\boldsymbol{\texttt{l}}}
\newcommand{\hpi}{\widehat{\bpi}^{(h,n)}}
\newcommand{\hpsi}{\widehat{\bpsi}^{(h,n)}}
\newcommand{\hpsipi}{\widehat{\bpsi}^{(h,n,\bpi)}}
\newcommand{\tpsipi}{ {\bpsi}^{(h,\bpi)}}
\newcommand{\tpsipis}{ {\bpsi}^{(h,\bpi^\star)}}
\newcommand{\score}{s}
\newcommand{\bscore}{\boldsymbol{\score}}
\newcommand{\effscore}{\tilde{\ell}}
\newcommand{\beffscore}{\boldsymbol{\effscore}}
\newcommand{\befffisher}{\boldsymbol{\Sigma}}
\newcommand{\beffscores}{\beffscore_{\bpi^\star,\bpsi^\star}}
\newcommand{\befffishers}{\befffisher_{\bpi^\star,\bpsi^\star}}
\newcommand{\befffishersinv}{\befffishers^{  -1}}
\newcommand{\noisy}{A}
\newcommand{\Gn}{\mathbb{G}_n}
\newcommand{\Pn}{\mathbb{P}_n}
\newtheorem{theorem}{Theorem} 
\newtheorem{assumptions}{Assumption}
\newtheorem{proposition}{Proposition}
\newtheorem{lemma}{Lemma}
\newtheorem{remark}{Remark}
\newcommand{\Eg}{\mathbb{E}_{g^\star}}
\newcommand{\KL}{\operatorname{KL}}
\newcommand{\bLambda}{\boldsymbol{\Lambda}}
\newcommand{\bG}{\boldsymbol{G}}
\title{Rates of Convergence of Maximum Smoothed Log-Likelihood  Estimators for Semi-Parametric Multivariate Mixtures}
\author[A]{Marie Du Roy de Chaumaray}
\author[B]{Michael Levine}
\author[C]{Matthieu Marbac}
\affil[A]{Univ. Rennes, CNRS, IRMAR-UMR 6625, F-35000 Rennes, France}
\affil[B]{Department of Statistics, Purdue University, 150 N. University
St., West Lafayette, IN 47907, USA}
\affil[C]{Université Bretagne Sud, UMR CNRS 6205, LMBA, F-56000 Vannes, France.}
\begin{document}
\maketitle

\begin{abstract}
Theoretical guarantees are established for a standard estimator in a semi-parametric finite mixture model, where each component density is modeled as a product of univariate densities under a conditional independence assumption. The focus is on the estimator that maximizes a smoothed log-likelihood function, which can be efficiently computed using a majorization-minimization algorithm. This smoothed likelihood applies a nonlinear regularization operator defined as the exponential of a kernel convolution on the logarithm of each component density. Consistency of the estimators is demonstrated by leveraging classical M-estimation frameworks under mild regularity conditions. Subsequently, convergence rates for both finite- and infinite-dimensional parameters are derived by exploiting structural properties of the smoothed likelihood, the behavior of the iterative optimization algorithm, and a thorough study of the profile smoothed likelihood. This work provides the first rigorous theoretical guarantees for this estimation approach, bridging the gap between practical algorithms and statistical theory in semi-parametric mixture modeling.  \\
\textbf{Keywords}: Empirical process; 
Finite mixture model; 
Majorization-minimization algorithm; 
Rate of convergence; 
Semi-parametric mixture
\end{abstract}

\section{Introduction}
Finite mixture models are commonly used to perform clustering since they model heterogeneity in populations in a rather natural way \citep{McL00,Fruhwirth2019handbook}. 
In this framework, a standard definition of a cluster corresponds to the subset of individuals generated by the same mixture component (see \citet{hennig2010methods} and \citet{baudry2010combining} for several extensions, and \citet{hennig2015true} for a discussion on cluster definitions).
A finite mixture model is characterized by three main components: the number of mixture components, the mixing proportions, and the component-specific distributions.
The initial developments in this area focused on parametric mixture models, which posit a specific parametric form for the component distributions. Among them, the Gaussian mixture model \citep{Ban93}—in which each component is assumed to follow a Gaussian distribution—is widely regarded as the canonical example.
To address the bias that may arise from misspecified parametric assumptions, semi-parametric mixture models were subsequently introduced, relaxing the parametric constraints on the component distributions.
Among these semi-parametric approaches, two classes of models are particularly prominent (see \citep{ChauveauSurveys2015} for a comprehensive review).
The first, tailored to univariate data, assumes that the components are symmetric and belong to a common location family \citep{bordes2006semiparametric,hunter2007inference,butucea2014semiparametric}.
The second, applicable to multivariate data, assumes that the component distributions can be represented as products of univariate densities \citep{hall2003nonparametric}.

In this paper, we consider the semi-parametric mixture model that makes no assumptions on the component distribution except that it is defined as a product of univariate densities.  
Specifically, we assume that the observed data are a random vector $\bX_i = (X_{1}, \ldots, X_{J})^\top \in \mathcal{X}$ following a $K$-component semi-parametric mixture distribution with the density
\begin{equation}\label{eq:model}
 g_{\bpi,\bpsi}(\bx)=\sum_{k=1}^K \pi_k \psi_k(\bx),    
\end{equation}
where the density of component $k$ is defined as a product of $J$ univariate densities such that
\begin{equation}\label{eq:model2}
\psi_k(\bx) =\prod_{j=1}^J \psi_{k,j}(x_{j}),
\end{equation}
where $\bpi = (\pi_1, \dots, \pi_K)^\top$ denotes the vector of mixing proportions, and $\bpsi$ denotes the collection of univariate densities $\psi_{k,j}$, which constitute infinite-dimensional parameters. This model relies on a conditional independence assumption across variables given the latent component, which significantly simplifies estimation by reducing the complexity of the component distributions. This structural constraint often leads to improved empirical performance, as it limits the number of parameters to be estimated \citep{hand2001idiot}.
A classical setting in which this conditional independence assumption is justified is the repeated measures framework with random effects, where the subject-specific random effect is replaced by a component-specific latent effect. Hence, the model defined by \eqref{eq:model}–\eqref{eq:model2} has been widely applied in various domains, including behavioral sciences \citep{CloggBook1995}, econometrics \citep{HuJofEco2013}, and sociology \citep{HagenaarsBook2002}.

Several studies have addressed the issue of identifiability for the model defined by \eqref{eq:model}–\eqref{eq:model2}. \cite{kasahara2014non} show that the number of components $K$ is identifiable under the condition that, for at least two distinct indices $j$, the set of functions $\{\psi_{1,j}, \ldots, \psi_{K,j}\}$ is linearly independent. This in turn requires that $J \geq 2$. However, such conditions do not guarantee identifiability of the model parameters themselves—namely, the finite-dimensional parameters $\bpi$ and the infinite-dimensional component densities $\bpsi$—which calls for stronger assumptions. The first identifiability results for the parameters of the model \eqref{eq:model}–\eqref{eq:model2} were established by \cite{hall2003nonparametric} in the case of two-component mixtures (i.e., $K=2$). More generally, \cite{allmanAOS09} proved that the parameters are identifiable when the sets $\{\psi_{1,j}, \ldots, \psi_{K,j}\}$ are linearly independent for at least three distinct values of $j$, which implies that $J \geq 3$. Following the standard approach in the literature on such models, we adopt these identifiability assumptions throughout the paper (see Assumptions~\ref{ass:controlvariancetrue}).

Various theoretical results concerning the model  \eqref{eq:model}–\eqref{eq:model2} have been established by considering discretization of the data. In this context, sufficient conditions for the identifiability of the model parameters can be derived as consequences of the identifiability of latent class models for categorical data, as shown in \cite{allmanAOS09}. \cite{hettmansperger2000almost} proved the asymptotic normality of the maximum likelihood estimator of the mixing proportions when the original data are transformed into binary variables (see also \cite{cruz2004semiparametric}). \cite{kasahara2014non} introduced an estimator for the number of components based on discretized data. However, this estimator is only consistent for a lower bound of the true number of components. To address this limitation, \cite{kwon2021estimation} extended the approach by incorporating an integral operator, thereby obtaining a consistent estimator of the true number of components. More recently, \cite{du2024full} proposed a likelihood-based method using a discretization scheme in which the number of bins increases with the sample size. Their approach yields a consistent estimator of the number of components and additionally allows for variable selection. These discretization-based methods can be interpreted as projection techniques onto function spaces spanned by indicator functions. In a broader projection-based framework—but under the simplifying assumption that the univariate densities within each component are identical (i.e., $\psi_{k,1} = \ldots = \psi_{k,J}$)—\cite{bonhomme2016non} constructed a two-step estimator for the infinite-dimensional parameters of model \eqref{eq:model}–\eqref{eq:model2}. 
Still within the projection framework, but without imposing any assumptions beyond those required by \cite{allmanAOS09} for identifiability, \cite{bonhomme2016estimating} proposed an estimator based on multilinear decompositions of multiway arrays that is  both consistent and asymptotically normal. Despite its generality and mathematical elegance, this approach does not aim to estimate the component densities directly. Instead, it focuses on recovering the latent structure through low-rank tensor decompositions, which limits its usefulness in settings where inference on the component distributions themselves is required. In addition, the approach operates within a high-dimensional algebraic framework involving large multiway arrays, which can lead to substantial computational challenges.  

As an alternative to projection-based methods, likelihood-based approaches and their extensions can also be considered. In this context, one of the earliest strategies for estimating both the finite- and infinite-dimensional parameters was to employ an EM-like algorithm \cite{benaglia2009like}. While this algorithm is straightforward to implement, it lacks theoretical guarantees and does not satisfy the ascent property typically expected of EM procedures. To address these limitations, \cite{levine2011maximum} proposed a majorization–minimization (MM) algorithm (see \cite{hunter2004tutorial,lange2016mm}) that maximizes a smoothed version of the log-likelihood. The smoothed log-likelihood function corresponds to the standard log-likelihood evaluated at a smoothed version of each component density. When applied to a given density, the smoothing operator is defined as the exponential of the convolution between a kernel with bandwidth $h$ and the logarithm of that density. This algorithm enjoys a desirable descent property, is easy to implement, and is available through the \texttt{R} package \texttt{mixtools} \citep{mixtools}. Building upon this framework, \cite{zhu2016theoretical} reformulated the objective function in terms of a penalized, smoothed Kullback–Leibler divergence. They established a refined monotonicity property for the algorithm and proved the existence of a solution to the associated optimization problem. However, despite these algorithmic developments, no theoretical guarantees are currently available regarding the statistical properties  of the estimator produced by this approach.

In this paper, we provide theoretical guarantees for the estimator that maximizes the smoothed log-likelihood. We begin by establishing the consistency of both the finite- and infinite-dimensional parameter estimators (see Theorem~\ref{thm:consistence}), using standard arguments from M-estimation theory. We then focus on deriving convergence rates for these estimators. To this end, we first characterize the convergence rate of the infinite-dimensional estimators in terms of the sample size, the bandwidth parameter used for smoothing, and the convergence rate of the finite-dimensional estimators (see Theorem~\ref{thm:rateL1}). We then derive a bound on the convergence rate of the finite-dimensional parameters themselves (see Theorem~\ref{thm:norma}), thereby obtaining an overall control of the convergence rates for all estimators. The proof of Theorem~\ref{thm:rateL1} leverages structural properties of the objective function, notably its convexity when the finite-dimensional parameters are held fixed. It also relies on key algorithmic properties, in particular an inequality that links the value of the objective function at two successive iterations to the $L_1$-distance between the infinite-dimensional estimates obtained at those iterations (see Lemma~\ref{lem:inequality}, which can be viewed as an extension of \cite[Corollary 3.3]{zhu2016theoretical}). Theorem~\ref{thm:norma} is established through an analysis of the semi-parametric profile smoothed likelihood, where the infinite-dimensional parameters are treated as nuisance parameters and profiled out. In our setting, it turns out that the presence of these nuisance parameters degrades the standard convergence rate of the finite-dimensional estimators. To capture this phenomenon, we extend the quadratic expansion of the profile smoothed likelihood developed by \cite{murphy2000profile}, showing explicitly how the smoothing inherent in our objective function affects the asymptotic behavior (see Proposition~\ref{thm:convparam}).

The rest of the paper is organized as follows. 
Section~\ref{sec:framework} introduces the multivariate mixtures of products of univariate densities. 
Section~\ref{sec:computation} presents the estimation framework   using the smoothed log-likelihood. 
discusses computational aspects and establishes the consistency of the estimator. 
Section~\ref{sec:prop} gives properties on the mapping functions defined by the estimation algorithm. 
Section~\ref{sec:nprate} presents the theoretical convergence rates for the estimators of the component density based on the bandwidth, the sample size and the convergence rate of the estimator of the proportions.
Section~\ref{sec:rate} presents the theoretical convergence rates for the  estimator of the proportions. and thus the convergence rate for both the finite-dimensional parameters and the nonparametric component densities. 
Section~\ref{sec:simulation} illustrates the finite-sample performance of the proposed estimator through numerical simulations. 
Finally, Section~\ref{sec:conclusion} concludes with a discussion and potential directions for future work.

\section{Mixture model of products of univariate densities}\label{sec:framework}
Let $\bX=(X_{1},\ldots,X_{J})^\top$ be a random variable defined on  the space $\mathcal{X}=\mathcal{X}_1\times \ldots\times\mathcal{X}_J$ where each $\mathcal{X}_j$, $1\le j\le J$  is a compact.   We consider $\mathcal{G}_K$, the family of mixture models defined by
$$
\mathcal{G}_K=\{g_{\bpi,\bpsi}: \bpi\in \mathcal{S}_K,\, \bpsi\in\Psi_K(\mathcal{X})\},
$$
where $g_{\bpi,\bpsi}$ is the density of a $K$ component mixture model defined by \eqref{eq:model}-\eqref{eq:model2}, where  $\bpi=(\pi_1,\ldots,\pi_K)^\top$ is the finite-dimensional parameter composed of the vector of proportions defined on  the simplex 
$$\mathcal{S}_K=\left\{\bpi=(\pi_1,\ldots,\pi_K)\in\mathbb{R}^K,\, 0\leq \pi_k, \sum_{k=1}^K \pi_k=1\right\},$$ 
and where  $\bpsi=(\psi_{1,1}, \ldots,\psi_{K,1},\psi_{1,2},\ldots,\psi_{K,J})$ groups the infinite-dimensional parameters defined on $\Psi_K(\mathcal{X})$ with
$$
\Psi_K(\mathcal{X}) = [\Psi(\mathcal{X}_1)\times\ldots\times\Psi(\mathcal{X}_J)]^K.
$$
Let $L_2(\mathcal{X}_j)$ be a set of square integrable univariate density functions defined on $\mathcal{X}_j$
 In the following, we assume that $\mathcal{X}_j$ is compact and that the space of the univariate density functions of each component is defined as
$$
\Psi(\mathcal{X}_j) = \{ \psi_{k,j}\in L_2(\mathcal{X}_j),\, 0<\psi\leq C_1,   \|\ln \psi\|_{L_2}\leq C_2, \|(\ln \psi)^{''}\|_{L^{\infty}}\leq C_3\}.
$$ 
Here, we assume that $\mathcal{X}_j$ is compact  in order to avoid some additional technical arguments in the proof. However, at the end of the article, we explain how the results can be extended to the case where $\mathcal{X}_j$ is the real line. In addition, the arguments used in the proofs still hold if $\psi$ is equal to zero on a set of null Lebesgue measure. 

Any relabeling of the mixture components yields the same observed distribution, so the model parameters are only identifiable up to label switching.
To avoid these issues, we consider that the vector of proportions $\bpi$ belongs to the restriction of the simplex   $\mathcal{S}_K^r$ such that its elements are in non-decreasing order leading that
$$
\mathcal{S}_K^r=\left\{\bpi\in\mathcal{S}_K,  \pi_k\leq \pi_{k+1}\right\}.
$$
The set of all the parameters is defined as $$\Theta_K = \mathcal{S}_K^r \times \Psi_K(\mathcal{X}).$$

We assume that observations arise independently from a mixture model defined by \eqref{eq:model}-\eqref{eq:model2} with parameters $(\bpi^\star,\bpsi^\star)$ that belong to the parameter space $\Theta_K$ and we denote the true density $$g^\star:=g_{(\bpi^\star,\bpsi^\star)}.$$
We aim to give theoretical guarantees on an estimator of $(\bpi^\star,\bpsi^\star)$ that belongs to $\Theta_K $ and that is computed from a $n$-sample composed of $n$ independent copies of $\bX$ denoted by $\bX_1,\ldots,\bX_n$. 
To ensure the identifiability of the parameters $(\bpi^\star,\bpsi^\star)$, we assume that $g^\star$ satisfies the following assumptions. Indeed, as a direct consequence of Theorem~8 in \cite{allmanAOS09}, the following Assumptions~\ref{ass:controlvariancetrue} ensure that the parameters $(\bpi^\star,\bpsi^\star)$ are strictly identifiable up to label swapping. 
\begin{assumptions}\label{ass:controlvariancetrue}
\begin{enumerate}
\item Each proportion $\pi_k^\star$ is strictly positive. \label{ass:prop}
\item There exists at least three values of $j\in\{1,\ldots,J\}$ such that the set of functions $\{\psi_{1,j}^\star,\ldots,\psi_{K,j}^\star\}$ is composed of linearly independent functions. \label{ass:indptlin}
\item All the proportions are different: that $\pi_k^\star\neq\pi_\ell^\star$ if $k\neq\ell$.
\end{enumerate}
\end{assumptions}
By Theorem 8 in \cite{allmanAOS09}, the Assumptions.\ref{ass:controlvariancetrue}.1 and Assumptions.\ref{ass:controlvariancetrue}.2 ensure identifiability of the parameters up to label switching. To address this issue, we impose both the simplex constraint on the proportions $\mathcal{S}_K^r$ and an ordering constraint that ensures the proportions are pairwise distinct.  These restrictions allow us to simplify the notation throughout the paper, while still covering models with equal mixture proportions. In such cases, the label switching problem could alternatively be handled by imposing an ordering on the distributions of one observed variable—whose component densities are linearly independent—at the cost of losing the product structure of the parameter space for the component densities. Another approach would be to refrain from imposing any ordering constraints and instead define distances between true parameters and their estimators by minimizing over all possible permutations of component labels. However, both alternatives lead to heavier notation. For the sake of clarity and conciseness, we therefore chose to impose ordering constraints on the proportions which is a usual approach \cite{hunter2007inference,butucea2014semiparametric}.

\section{Estimation by maximizing the smoothed log-likelihood}\label{sec:computation}
\subsection{Smoothing operator and loss functions}
The estimation of the parameters in the mixture model defined by \eqref{eq:model}–\eqref{eq:model2} cannot be directly performed through log-likelihood maximization, as the model involves an infinite-dimensional parameter $\bpsi$. This difficulty can be circumvented by introducing a smoothing operator based on a kernel function. Let $\kernel$ denote a kernel density on the real line. We define the product kernel $\kernel(\bx) = \prod_{j=1}^J \kernel(x_j)$ and its rescaled version $\kernel_h(\bx) = h^{-J} \prod_{j=1}^J \kernel(x_j/h) = \prod_{j=1}^J \kernel_h(x_j)$ for a given bandwidth $h > 0$. Throughout, we use bold notation in the argument to indicate a rescaled multivariate kernel $\kernel_h(\bx)$, and regular font to denote a rescaled univariate kernel $\kernel_h(x)$. The kernel is assumed to satisfy standard regularity conditions.
\begin{assumptions}\label{ass:controlvariancekernel}
\begin{enumerate}
    \item The kernel function $\kernel$ is a symmetric,   square-integrable, continuous  density function of order $2$ that admits a derivative $\kernel'$ that has a finite $L_2$-norm. In other words, $\int K(u)\,du=1$, $\int u\kernel(u)\,du=0$, $\int u^{2}\kernel(u)\,du\ne 0$ and $\int (\kernel'(u))^2du<\infty$.
    \item There exists $b_1(h)$ and $b_2(h)$ two positive reals such that $b_1(h)\leq \kernel_h(u-v)\leq b_2(h)$\label{ass:boundkernel}
    \item There exists $L_h>0$ such that $|\kernel_h(x) - \kernel_h(y) |\leq L_h|x-y|$ for any $x,y$. \label{ass:lipschitzkernel}
    \item The kernel  a Gaussian or sub-Gaussian kernel with constant $\kappa$.
\end{enumerate}
\end{assumptions}

For any $J$-variate density function $\rho$, we consider the nonlinear smoothing operator $\smooth$ defined as
$$
\smooth \rho (\bx) =     \exp \int_{\mathcal{X}}   \kernel_h(\bx - \boldsymbol{y}) \ln \rho(\boldsymbol{y}) d\boldsymbol{y} ,
$$
where $h>0$ is a positive bandwidth. 
Note that $\smooth$ is a multiplicative operator in the following sense: for any function $\psi_{k}$ we have $$\smooth\psi_{k}(\bx)=\prod_{j=1}^{J}\smooth_j\psi_{k,j}(x_j),$$ with
$$\smooth_j\psi_{k,j}(x_j):= \exp[ (\kernel_h \star \ln \psi_{k,j})(x_j)],$$
where $\star$ denotes the convolution product such that $$ (\kernel_h \star \ln \psi_{k,j})(x_j)=\int_{\mathcal{X}_j}\kernel_{h}(x_{j}-u)\ln\psi_{k,j}(u)du.$$
Due to Jensen's inequality, although $\smooth_j\psi_{k,j}(x_j)$ is a positive function, its integral $\int \smooth_j\psi_{k,j}(x_j)\,dx_j\leq1$. Thus, the result of such a smoothing is a ``subdensity", not a true density.
 Starting from parameter $(\bpi,\bpsi)$ and applying the nonlinear smoothing operator $\smooth$ with bandwidth $h$ on each component of the mixture $g_{\bpi,\bpsi}$, provides the subdensity $f^{(h)}_{\bpi,\bpsi}$ defined as
$$
f^{(h)}_{\bpi,\bpsi}(\bx)=\sum_{k=1}^K \pi_k   \smooth \psi_{k}(\bx).
$$

From the smoothing operator defined with any bandwidth $h>0$, as suggested by \cite{levine2011maximum}, we consider the following loss function 
\begin{equation}\label{eq:loss}
\loss^{(h)}(\bpi,\bpsi) = \int_{\mathcal{X}} g^\star(\bx) \ln \frac{g^\star(\bx)}{f^{(h)}_{\bpi,\bpsi}(\bx)} d\bx.
\end{equation}
This loss function can be interpreted as a sum of the (generalized) Kullback-Leibler divergence and an additional term:
$$
\loss^{(h)}(\bpi,\bpsi) =\KL(g^\star,f^{(h)}_{\bpi,\bpsi})+\int f^{(h)}_{\bpi,\bpsi}(\bx)\,d\bx-1,
$$
where, for any two non-negative function $a(\bx)$ and $b(\bx)$, the (generalized) Kullback-Leibler divergence is defined as
$$
\KL(a,b)=\int_\mathcal{X}\left[ a(\bx) \ln \frac{a(\bx)}{b(\bx)} + b(\bx) - a(\bx)\right] d\bx.
$$
In addition, we extend the definition of the loss function at $h=0$ by
\begin{equation}\label{eq:loss0}
\loss^{(0)}(\bpi,\bpsi)=\int_{\mathcal{X}} g^\star(\bx) \ln \frac{g^\star(\bx)}{g_{\bpi,\bpsi}(\bx)} d\bx.
\end{equation}
The following lemma establishes the order of the biases caused by the smoothing of the target density $g_{\bpi,\bpsi}$ and the loss function.
\begin{lemma}\label{lem:taylor}
Under Assumptions~\ref{ass:controlvariancekernel}, the properties of $\Theta_K$ ensures that
    $$
\sup_{(\bpi,\bpsi)\in\Theta_K} \|g_{\bpi,\bpsi} - f^{(h)}_{\bpi,\bpsi}\|_\infty=O(h^2)
$$ 
and
 $$
\sup_{(\bpi,\bpsi)\in\Theta_K} |\lossh(\bpi,\bpsi) - \loss^{(0)}(\bpi,\bpsi)|=O(h^2).
$$ 
\end{lemma}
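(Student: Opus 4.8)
The plan is to trace both statements back to a single estimate: the bias of the kernel smoother applied to $\ln\psi_{k,j}$ is uniformly $O(h^2)$. Concretely, set $\delta_{k,j}(x):=(\kernel_h\star\ln\psi_{k,j})(x)-\ln\psi_{k,j}(x)$. After the substitution $u=x-hv$ the convolution becomes $\int\kernel(v)\ln\psi_{k,j}(x-hv)\,dv$; a second-order Taylor expansion of $\ln\psi_{k,j}$, together with the order-$2$ properties $\int\kernel=1$, $\int v\kernel(v)\,dv=0$ and the finite second moment $\int v^2\kernel(v)\,dv<\infty$ from Assumption~\ref{ass:controlvariancekernel}, cancels the constant and linear terms and leaves only the quadratic remainder, bounded by $\tfrac{h^2}{2}\|(\ln\psi_{k,j})''\|_\infty\int v^2\kernel(v)\,dv$. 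Since $\|(\ln\psi_{k,j})''\|_{L^\infty}\le C_3$ on $\Psi(\mathcal{X}_j)$, this yields $\sup_x|\delta_{k,j}(x)|\le C\,h^2$ with $C$ depending only on $C_3$ and the kernel, hence uniformly over $\Theta_K$.

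For the first claim I would reduce the multivariate bound to the univariate one. Writing $g_{\bpi,\bpsi}-f^{(h)}_{\bpi,\bpsi}=\sum_k\pi_k(\psi_k-\smooth\psi_k)$ and using $\sum_k\pi_k=1$, it suffices to bound $\|\psi_k-\smooth\psi_k\|_\infty$ uniformly. Telescoping the products $\psi_k=\prod_j\psi_{k,j}$ and $\smooth\psi_k=\prod_j\smooth_j\psi_{k,j}$ expresses the difference as a sum of $J$ terms, each a product of factors of the two types times a single univariate difference $\psi_{k,j}-\smooth_j\psi_{k,j}$. Now $0<\psi_{k,j}\le C_1$, and combining $\|\ln\psi_{k,j}\|_{L_2}\le C_2$ with $\|(\ln\psi_{k,j})''\|_\infty\le C_3$ via a standard interpolation inequality on the compact interval $\mathcal{X}_j$ shows $\|\ln\psi_{k,j}\|_\infty$ is uniformly bounded, so both $\psi_{k,j}$ and $\smooth_j\psi_{k,j}=\exp(\kernel_h\star\ln\psi_{k,j})$ lie in a fixed interval $[m_0,M_0]$ with $m_0>0$. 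Finally $\psi_{k,j}-\smooth_j\psi_{k,j}=\psi_{k,j}\bigl(1-e^{\delta_{k,j}}\bigr)$ and $|1-e^{\delta_{k,j}}|\le e\,|\delta_{k,j}|$ for $h$ small, so each univariate difference is $O(h^2)$ and the telescoped sum is $O(h^2)$, uniformly.

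For the second claim I would compute $\lossh(\bpi,\bpsi)-\loss^{(0)}(\bpi,\bpsi)=\int_{\mathcal{X}}g^\star(\bx)\ln\frac{g_{\bpi,\bpsi}(\bx)}{f^{(h)}_{\bpi,\bpsi}(\bx)}\,d\bx$, the $g^\star\ln g^\star$ terms cancelling. The uniform lower bound $m_0>0$ propagates (again using $\sum_k\pi_k=1$) to $g_{\bpi,\bpsi}\ge m$ and $f^{(h)}_{\bpi,\bpsi}\ge m$ for a fixed $m>0$, so the mean value theorem gives $|\ln g_{\bpi,\bpsi}-\ln f^{(h)}_{\bpi,\bpsi}|\le m^{-1}|g_{\bpi,\bpsi}-f^{(h)}_{\bpi,\bpsi}|$. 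Bounding the integrand by $m^{-1}\|g_{\bpi,\bpsi}-f^{(h)}_{\bpi,\bpsi}\|_\infty$ and using $\int g^\star=1$ reduces the second statement to the first, giving $O(h^2)$ uniformly over $\Theta_K$.

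The step requiring the most care is the uniform control of $\delta_{k,j}$ over the \emph{whole} compact domain $\mathcal{X}_j$, including near its endpoints. Because $\int_{\mathcal{X}_j}\kernel_h(x-u)\,du$ deviates from $1$ within $O(h)$ of the boundary, a naive interior Taylor expansion leaves an $O(1)$ boundary contribution to $\delta_{k,j}$, which would spoil the claimed $O(h^2)$; obtaining the uniform rate therefore hinges on exploiting the regularity of $\ln\psi_{k,j}$ (its uniformly bounded second derivative) to absorb the boundary region — precisely the point that, as the authors indicate, turns into a genuine technical difficulty once $\mathcal{X}_j$ is replaced by the real line. All remaining steps are uniform in $(\bpi,\bpsi)$ thanks to the fixed constants $C_1,C_2,C_3$ defining $\Psi(\mathcal{X}_j)$.
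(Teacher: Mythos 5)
Your core estimate is exactly the paper's: a second-order Taylor expansion of $\ln\psi_{k,j}$ under the kernel, made uniform over $\Theta_K$ by $\|(\ln\psi_{k,j})''\|_{L^\infty}\leq C_3$ and the finite second moment of $\kernel$, giving $\kernel_h\star\ln\psi_{k,j}=\ln\psi_{k,j}+O(h^2)$ uniformly. Your telescoping treatment of the first claim is a cosmetic variant of the paper's route, which instead uses the multiplicative identity $\smooth_j\psi_{k,j}=\psi_{k,j}\exp\bigl(h^2\iota_{k,j}\bigr)$ and multiplies across coordinates to get $f^{(h)}_{\bpi,\bpsi}=g_{\bpi,\bpsi}\bigl(1+\sum_k\omega^{(0)}_{\bpi,\bpsi,k}\tau^{(h)}_{\bpi,\bpsi,k}\bigr)$ with $\|\tau^{(h)}_{\bpi,\bpsi,k}\|_\infty=O(h^2)$ uniformly, where $\omega^{(0)}_{\bpi,\bpsi,k}\in[0,1]$ are the unsmoothed posterior weights. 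Where you genuinely diverge is the second claim: you derive a uniform positive lower bound on $g_{\bpi,\bpsi}$ and $f^{(h)}_{\bpi,\bpsi}$ (via an interpolation bound on $\|\ln\psi_{k,j}\|_\infty$) and then apply the mean value theorem to the logarithm, whereas the paper writes $\ln\bigl(f^{(h)}_{\bpi,\bpsi}/g_{\bpi,\bpsi}\bigr)=\ln\bigl(1+\sum_k\omega^{(0)}_{\bpi,\bpsi,k}\tau^{(h)}_{\bpi,\bpsi,k}\bigr)$ and uses $|\ln(1+u)|\leq 2|u|$ for $|u|\leq 1/2$, exploiting that the weights sum to one. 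The paper's route buys robustness: the ratio is uniformly close to $1$ regardless of how small the densities are, so no lower bound and no interpolation lemma are needed — an advantage in view of the extension to $\mathcal{X}_j=\mathbb{R}$, where your sup-norm lower bound is unavailable. Your route is nonetheless valid on compacts (the paper itself uses the fact that elements of $\Psi(\mathcal{X}_j)$ are bounded away from zero in later proofs), so I would count it as a correct alternative with an extra, avoidable ingredient.

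One caution about your closing paragraph. You correctly identify the mass deficit $1-\int_{\mathcal{X}_j}\kernel_h(x-u)\,du$ near the endpoints as the delicate point, but your proposed remedy — absorbing it through the bounded second derivative of $\ln\psi_{k,j}$ — does not work: the deficit multiplies $\ln\psi_{k,j}(x)$ itself, an $O(1)$ quantity that no smoothness assumption removes, so $\delta_{k,j}$ is genuinely of order one within an $O(h)$ neighborhood of the boundary unless $\ln\psi_{k,j}$ happens to vanish there. You should know, however, that the paper's own proof performs the change of variables and Taylor expansion as if every point were interior and never addresses this boundary effect either, so on this point the two arguments stand or fall together; a genuine fix would require renormalizing the kernel near the boundary, extending $\ln\psi_{k,j}$ outside $\mathcal{X}_j$, or restricting the supremum to points whose distance to the boundary dominates $h$.
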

 
Note that Lemma~\ref{lem:taylor} implies that $\lim_{h\to 0^+} \loss^{(h)}(\bpi,\bpsi)=\loss^{(0)}(\bpi,\bpsi)$. 
We define $(\bpi^{(h)},\bpsi^{(h)})$ as a the minimizer of $\loss^{(h)}(\bpi,\bpsi) $ with respect to its parameters.  
Note that, due to the smoothing, $(\bpi^{(h)},\bpsi^{(h)})$ is not equal to $(\bpi^\star,\bpsi^\star)$ in general. However, under Assumption~\ref{ass:controlvariancetrue} that ensures the parameter identifiability, we have $\lim_{h \to 0}(\bpi^{(h)},\bpsi^{(h)})=(\bpi^\star,\bpsi^\star)$. We consider $\bX_1,\ldots,\bX_n$ an observed sample composed $n$ independent observations drawn from $g^\star$. To perform the estimation of the parameters, we consider an empirical version of the loss function defined for any $h>0$ by
$$
\loss^{(h,n)}(\bpi,\bpsi) = \frac{1}{n} \sum_{i=1}^n \ln \frac{g^\star(\bX_i)}{f^{(h)}_{\bpi,\bpsi}(\bX_i)}.
$$
The parameter estimation is performed by minimizing $\loss^{(h,n)}(\bpi,\bpsi)$ with respect to $(\bpi,\bpsi)$ which is equivalent to maximizing the  smoothed log-likelihood (\emph{i.e.,} the log-likelihood function computed with the subdensity function $f^{(h)}_{\bpi,\bpsi}$). Denoting by $(\hpi,\hpsi)$ estimator that minimizes the empirical version of the loss function, we have
$$
(\hpi,\hpsi) = \argmin_{(\bpi,\bpsi) \in \Theta_K} \lossnh(\bpi,\bpsi).
$$

\subsection{Consistency of the estimator maximizing the smoothed log-likelihood}

The following lemma permits to control uniformly in $\bpsi$ the error term between $\lossnh(\bpi,\bpsi)$ and $\lossh(\bpi,\bpsi)$. 
\begin{lemma}\label{lem:deviation}
 Under Assumption~\ref{ass:controlvariancekernel}, the properties of $\Theta_K$ ensures that
 $$
\sup_{(\bpi,\bpsi) \in \Theta_K} | \lossnh(\bpi,\bpsi) -  \lossh(\bpi,\bpsi)|= O_\mathbb{P}(n^{-1/2}h^{-1/2}).
 $$
\end{lemma}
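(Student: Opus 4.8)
The plan is to recognize the quantity as the supremum of an empirical process indexed by $(\bpi,\bpsi)$ and to bound it through the bracketing maximal inequality, with all the $h$-dependence entering via the bracketing entropy of the smoothed densities. First I would write, with $\Pn f=\tfrac1n\sum_{i=1}^n f(\bX_i)$, $Pf=\int f(\bx)g^\star(\bx)\,d\bx$ and $\Gn=\sqrt n(\Pn-P)$,
$$
\lossnh(\bpi,\bpsi)-\lossh(\bpi,\bpsi)=(\Pn-P)\big[\ln g^\star\big]-(\Pn-P)\big[\ln f^{(h)}_{\bpi,\bpsi}\big].
$$
The first term does not depend on $(\bpi,\bpsi)$ and, since $\ln g^\star$ is bounded on the compact $\mathcal{X}$, is $O_\mathbb{P}(n^{-1/2})$, which the claimed rate absorbs. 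It therefore suffices to control $\sup_{(\bpi,\bpsi)\in\Theta_K}\big|\Gn[\ln f^{(h)}_{\bpi,\bpsi}]\big|$, i.e. the empirical process over the class $\mathcal{F}_h=\{\bx\mapsto\ln f^{(h)}_{\bpi,\bpsi}(\bx):(\bpi,\bpsi)\in\Theta_K\}$.

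Second, I would check that $\mathcal{F}_h$ has a bounded envelope uniform in $h$. Using the constraints defining $\Psi(\mathcal{X}_j)$ — namely $\psi\le C_1$ together with $\|\ln\psi\|_{L_2}\le C_2$ and $\|(\ln\psi)''\|_{L^{\infty}}\le C_3$, which on the compact $\mathcal{X}_j$ force $\|\ln\psi\|_\infty\le M$ for a constant $M$ independent of $h$ — and the fact that $\kernel_h$ is nonnegative and integrates to one, each univariate factor satisfies $e^{-M}\le\smooth_j\psi_{k,j}\le C_1$. Hence $e^{-MJ}\le\smooth\psi_k\le C_1^J$, and since $\bpi\in\mathcal{S}_K^r$ sums to one, $e^{-MJ}\le f^{(h)}_{\bpi,\bpsi}\le C_1^J$, so $\|\ln f^{(h)}_{\bpi,\bpsi}\|_\infty\le F_0$ for a constant $F_0$ independent of $(\bpi,\bpsi)$ and $h$.

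Third — and this is the crux — I would bound the bracketing entropy of $\mathcal{F}_h$, exploiting the product structure to keep the exponent of $1/\epsilon$ linear despite the $J$-dimensional domain. Each univariate smoothed density $\smooth_j\psi$ is $O(h^{-1})$-Lipschitz, since $\tfrac{d}{dx}\smooth_j\psi=\smooth_j\psi\cdot(\kernel_h'\star\ln\psi)$ with $|\kernel_h'\star\ln\psi|\le M\!\int|\kernel_h'|=Mh^{-1}\!\int|\kernel'|=O(h^{-1})$, so a bounded, $O(h^{-1})$-Lipschitz class on a compact interval has $\log N_{[]}(\epsilon,\|\cdot\|_\infty)\lesssim h^{-1}/\epsilon$. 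The decisive point is that brackets for the multivariate class are built by multiplying univariate brackets over $j$, summing the products over $k$ against brackets for $\bpi$ on the fixed-dimensional simplex, and finally composing with $\ln$, which is Lipschitz on $[e^{-MJ},C_1^J]$; because combining brackets multiplicatively adds log-cardinalities, this gives
$$
\log N_{[]}\big(\epsilon,\mathcal{F}_h,L_2(P)\big)\lesssim KJ\,h^{-1}/\epsilon,
$$
linear in $1/\epsilon$ rather than $(h^{-1}/\epsilon)^J$. This is where the conditional-independence/product structure is essential, and where I expect the main technical effort, both in the careful bracket construction and in the uniform Lipschitz control of the convolution on the compact domain (including boundary effects).

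Finally, this entropy bound yields $J_{[]}(F_0,\mathcal{F}_h,L_2(P))=\int_0^{F_0}\sqrt{\log N_{[]}(\epsilon,\mathcal{F}_h,L_2(P))}\,d\epsilon\lesssim\int_0^{F_0}\sqrt{KJ\,h^{-1}/\epsilon}\,d\epsilon=O(h^{-1/2})$. The bracketing maximal inequality (for instance van der Vaart and Wellner, Lemma~3.4.2) then gives $\mathbb{E}\,\|\Gn\|_{\mathcal{F}_h}\lesssim J_{[]}=O(h^{-1/2})$, so Markov's inequality yields $\sup_{(\bpi,\bpsi)}|\Gn[\ln f^{(h)}_{\bpi,\bpsi}]|=O_\mathbb{P}(h^{-1/2})$, that is $\sup_{(\bpi,\bpsi)}|(\Pn-P)[\ln f^{(h)}_{\bpi,\bpsi}]|=O_\mathbb{P}(n^{-1/2}h^{-1/2})$. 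Combining with the $O_\mathbb{P}(n^{-1/2})$ contribution of the $\ln g^\star$ term gives the stated bound.
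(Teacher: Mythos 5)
Your proof is correct and follows essentially the same route as the paper: both reduce the claim to the empirical process over the class of smoothed log-densities, establish a bracketing-entropy bound of order $1/(\varepsilon h)$ by combining the uniform bound with the $O(h^{-1})$ derivative control of $\smooth_j \psi_{k,j}$ (exploiting the product/mixture structure so the entropy stays linear in $1/\varepsilon$ rather than scaling with dimension $J$), and conclude via the entropy-integral maximal inequality together with Markov's inequality. The only differences are cosmetic: the paper phrases the entropy step through scaled Sobolev balls $\mathcal{W}^{1,2,r}$ with radius $r=O(h^{-1})$ rather than Lipschitz brackets (and bounds the derivative by Cauchy--Schwarz with $\|\kernel'\|_{L_2}\|\ln\psi\|_{L_2}$ instead of $\|\ln\psi\|_\infty\int|\kernel'_h|$), while you handle the centered empirical average of $\ln g^\star$ explicitly, a term the paper's final display leaves implicit.
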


From Lemmas~\ref{lem:taylor} and~\ref{lem:deviation}, sufficient conditions on the bandwidth can be derived to ensure that $\lossnh$ converges in probability to $\loss^{(0)}$ uniformly over $\Theta_K$, under Assumptions~\ref{ass:controlvariancekernel}. Combining this result with the parameter identifiability ensured by Assumption~\ref{ass:controlvariancetrue} allows us to establish the following theorem, which states the consistency of the estimators $(\hpi,\hpsi)$.
\begin{theorem}\label{thm:consistence}
    Under Assumptions~\ref{ass:controlvariancetrue} and \ref{ass:controlvariancekernel}, as $h$ tends to zero as $n$ tends to infinity and $nh$ tends to infinity, then $(\hpi,\hpsi)$ converges in probability to $(\bpi^\star,\bpsi^\star)$ leading that 
we have
$$
\|(\hpi,\hpsi) - (\bpi^\star,\bpsi^\star)\|_\infty=o_\mathbb{P}(1).
$$ 
\end{theorem}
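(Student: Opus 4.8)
The plan is to invoke the classical argmin consistency framework for M-estimators (e.g.\ van der Vaart, Theorem~5.7), whose two ingredients are uniform convergence of the empirical criterion to a deterministic limit and a well-separated minimum of that limit at the truth. First I would assemble the uniform convergence from the two preceding lemmas: by the triangle inequality, Lemma~\ref{lem:taylor} and Lemma~\ref{lem:deviation} yield
$$
\sup_{(\bpi,\bpsi)\in\Theta_K}\bigl|\lossnh(\bpi,\bpsi)-\loss^{(0)}(\bpi,\bpsi)\bigr|=O(h^2)+O_\mathbb{P}(n^{-1/2}h^{-1/2}).
$$
In the stated regime ($h\to 0$ and $nh\to\infty$) both terms vanish, so $\lossnh$ converges to $\loss^{(0)}$ uniformly over $\Theta_K$ in probability.

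Next I would show that $\loss^{(0)}$ has a well-separated minimum at $(\bpi^\star,\bpsi^\star)$. Since $g_{\bpi,\bpsi}$ and $g^\star$ are genuine densities, $\loss^{(0)}(\bpi,\bpsi)=\KL(g^\star,g_{\bpi,\bpsi})$ reduces to the ordinary Kullback--Leibler divergence, so $\loss^{(0)}\geq 0$ with equality iff $g_{\bpi,\bpsi}=g^\star$ almost everywhere, and $\loss^{(0)}(\bpi^\star,\bpsi^\star)=0$. The identifiability ensured by Assumption~\ref{ass:controlvariancetrue} forces this equality to hold only at $(\bpi,\bpsi)=(\bpi^\star,\bpsi^\star)$ (the label ordering being already fixed by $\mathcal{S}_K^r$), so the minimizer is unique. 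To upgrade uniqueness to well-separation, i.e.
$$
\inf_{\|(\bpi,\bpsi)-(\bpi^\star,\bpsi^\star)\|_\infty\geq\varepsilon}\loss^{(0)}(\bpi,\bpsi)>0 \quad\text{for every } \varepsilon>0,
$$
I would combine compactness of $\Theta_K$ with (lower semi-)continuity of $(\bpi,\bpsi)\mapsto\loss^{(0)}(\bpi,\bpsi)$ for the sup-norm. Compactness is exactly what the constraints defining $\Psi(\mathcal{X}_j)$ supply: on the compact $\mathcal{X}_j$, the bounds $\|\ln\psi\|_{L_2}\leq C_2$ and $\|(\ln\psi)''\|_{L^\infty}\leq C_3$ give, by standard interpolation/Sobolev embedding, uniform control of $\ln\psi$ and its first derivative, hence an equicontinuous uniformly bounded family, so Arzel\`a--Ascoli makes each $\Psi(\mathcal{X}_j)$ relatively compact in $C(\mathcal{X}_j)$ and the product $\Theta_K$ compact.

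Finally I would conclude by the usual argmin inequality. Since $(\hpi,\hpsi)$ minimizes $\lossnh$, one has $\lossnh(\hpi,\hpsi)\leq\lossnh(\bpi^\star,\bpsi^\star)$; uniform convergence then gives $\loss^{(0)}(\hpi,\hpsi)\leq\loss^{(0)}(\bpi^\star,\bpsi^\star)+o_\mathbb{P}(1)=o_\mathbb{P}(1)$, and well-separation forces $\|(\hpi,\hpsi)-(\bpi^\star,\bpsi^\star)\|_\infty=o_\mathbb{P}(1)$, as claimed.

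I expect the main obstacle to lie in the second step and, specifically, in the infinite-dimensional character of $\bpsi$: verifying compactness of $\Theta_K$ in the sup-norm from the smoothness constraints, and checking continuity of $\loss^{(0)}$ (the logarithm in the KL functional must be controlled, which is where the strict positivity $\psi>0$ together with the smoothness bounds is used). Producing well-separation rather than mere uniqueness is the delicate point, since it is precisely what couples the identifiability of Assumption~\ref{ass:controlvariancetrue} to the topology in which consistency is asserted.
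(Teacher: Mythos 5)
Your proposal is correct and follows essentially the same route as the paper: uniform convergence of $\lossnh$ to $\loss^{(0)}$ via the triangle inequality and Lemmas~\ref{lem:taylor}--\ref{lem:deviation}, compactness of $\Theta_K$ from Arzel\`a--Ascoli, identifiability from Assumption~\ref{ass:controlvariancetrue} giving uniqueness of the minimizer, and the two-sided argmin inequality. The only cosmetic difference is that you package the final step as well-separation of the minimum (van der Vaart's Theorem~5.7), whereas the paper runs the equivalent subsequence-extraction/contradiction argument directly; since well-separation is itself deduced from compactness, continuity and uniqueness by exactly that subsequence argument, the two presentations coincide.
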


 Next, we establish a convergence rate of the estimator  in three steps. First, profiling is introduced, as is standard for semi-parametric problems involving likelihood-based estimation. However, note that here the profiling is performed on the smoothed version of the loss functions. Second, Theorem~\ref{thm:rateL1} shows that the accuracy of the infinite-dimensional estimators depends on the bandwidth, the sample size, and the accuracy of the finite-dimensional estimators. Third, Theorem~\ref{thm:norma} demonstrates that efficient inference can be conducted for the finite-dimensional parameter.

\section{Mapping functions for the parameter estimation algorithm and profiling the loss functions}\label{sec:prop}
\subsection{Mapping functions for the parameter estimation algorithm}
 The minimizations of $\lossh$ and $\lossnh$ do not admit closed-form solutions. A standard approach is to use a Majorization-Minimization (MM) algorithm to minimize \eqref{eq:loss} with respect to the parameters $(\bpi,\bpsi)$ (see \cite{lange2016mm} for a general review of MM algorithms, and \cite{levine2011maximum} for their application to mixture models). Starting from an initial value of the parameters, the algorithm alternates between a Majorization step and a Minimization step.  Among the algorithm's properties, \cite{levine2011maximum} established its monotonicity, while \cite{zhu2016theoretical} proved the existence of solutions to the optimization problems associated with the minimization of both $\lossh$ and $\lossnh$. To compute $(\bpi^{(h)},\bpsi^{(h)})$, the minimizers of $\lossh$, the MM algorithm is initialized at some starting point $(\bpi^{[0]},\bpsi^{[0]})$ and iteratively updated until convergence. The two steps that compose each iteration of the algorithm can be combined into a single mapping from $\Theta_K$ to $\Theta_K$. Specifically, iteration $r$ of the algorithm produces an updated parameter $(\bpi^{[r]},\bpsi^{[r]})$ from the previous iterate $(\bpi^{[r-1]},\bpsi^{[r-1]})$ by 
$$
\pi^{[r]}_k= P^{(h)}_{k}[\bpi^{[r-1]},\bpsi^{[r-1]}]
$$
and
$$
\psi_{k,j}^{[r]} = M^{(h)}_{k,j}[\bpsi^{[r-1]};\bpi^{[r-1]},\bpi^{[r]}],
$$
with
$$
P^{(h)}_{k}[\bpi,\bpsi]=\int_{\mathcal{X}} g^\star(\bx) \omega_{\bpi,\bpsi,k}^{(h)}(\bx)d\bx,
$$
and for any $u\in\mathcal{X}_j$
$$
M^{(h)}_{k,j}[\bpsi;\bpi,\tilde\bpi](u) = \frac{1}{   \tilde\pi_k} \int_{\mathcal{X}} g^\star(\bx) \omega^{(h)}_{\bpi,\bpsi,k}(\bx)\frac{1}{h}\kernel\left(\frac{x_{j} - u}{h} \right) d\bx,
$$
where $\omega_{\bpi,\bpsi,k}^{(h)}(\bx)$ corresponds to a smoothed version of the posterior probabilities of classification that observation $\bx$ arise from component $k$ given the parameters $(\bpi,\bpsi)$ and the bandwidth $h$ that are defined by
\begin{equation}\label{eq:weights}
\omega_{\bpi,\bpsi,k}^{(h)}(\bx) = \frac{\pi_k  \smooth \psi_{k}(\bx)}{f^{(h)}_{\bpi,\bpsi}(\bx)}.
\end{equation}
In the following, we denote by $M^{(h)}[\bpsi;\bpi,\tilde\bpi]$ the collection of $M^{(h)}_{k,j}[\bpsi;\bpi,\tilde\bpi]$ for $k\in\{1,\ldots,K\}$ and $j\in\{1,\ldots,J\}$. Note that the algorithm only converges to local optima of the objective function. Hence, different starting points need to be considered.

 To compute $(\hpi,\hpsi)$, the minimizers of $\lossnh$, a similar MM algorithm to the one used for minimizing $\lossh$ is employed, where all quantities are replaced by their empirical counterparts. The algorithm starts from an initial value $(\bpi^{[0]},\bpsi^{[0]})$ and iterates until convergence. At each iteration $r$, the parameters $(\bpi^{[r]},\bpsi^{[r]})$ are updated from $(\bpi^{[r-1]},\bpsi^{[r-1]})$ in the same manner as in the optimization of $\lossh$, with the functions $P_{k}^{(h)}$ and $M^{(h)}_{k,j}$ replaced by their empirical counterparts 
$$
P_{k}^{(h,n)}[\bpi,\bpsi]=\frac{1}{n} \sum_{i=1}^n \omega_{\bpi,\bpsi,K}^{(h)}(\bX_i),
$$
and
$$
M_{k,j}^{(h,n)}[\bpsi;\bpi,\tilde\bpi](u) = \frac{1}{ n \tilde\pi_k} \sum_{i=1}^n\omega_{\bpi,\bpsi,K}^{(h)}(\bX_i)\frac{1}{h}\kernel\left(\frac{X_{i,j} - u}{h} \right).
$$
In the following, we denote by $M^{(h,n)}[\bpsi;\bpi,\tilde\bpi]$ the collection of $M^{(h,n)}_{k,j}[\bpsi;\bpi,\tilde\bpi]$ for $k\in\{1,\ldots,K\}$ and $j\in\{1,\ldots,J\}$. 

\subsection{Profiling the loss function}

Let $\profileh$ be the profiled version of $\lossh$ defined by
$$
\profileh(\bpi) = \lossh(\bpi,\tpsipi ),
$$
where $\tpsipi$ is the infinite-dimensional parameter that minimizes $\lossh$ with respect to $\bpsi$ for a fixed value of $\bpi$: 
\begin{equation} \label{eq:tpsipi}
\tpsipi = \argmin_{\bpsi\in\Psi_K(\mathcal{X})} \lossh(\bpi,\bpsi).
\end{equation}
Hence, by definition of $(\bpi^{(h)},\bpsi^{(h)})$, we have $\bpsi^{(h)} = \bpsi^{(h,\bpi^{(h)})}$ and $\bpi^{(h)}=\argmin_{\bpi\in\mathcal{S}_K} \profileh(\bpi)$.   Similarly, we defined $\profilenh$ as the profiled version of $\lossnh$ leading that
$$
\profilenh(\bpi) = \lossnh(\bpi,\hpsipi),
$$
where $\hpsipi$ is the infinite-dimensional parameter that minimizes $\lossnh$ with respect to $\bpsi$ for a fixed value of $\bpi$, leading that
\begin{equation}\label{eq:hpsipi}
\hpsipi = \argmin_{\bpsi\in\Psi_K(\mathcal{X})} \lossnh(\bpi,\bpsi).
\end{equation}
Hence, by definition of $(\hpi,\hpsi)$, we have $\hpsi = \widehat{\bpsi}^{(h,n,\hpi)}$ and $\hpi=\argmin_{\bpi\in\mathcal{S}_K}\profilenh(\bpi)$.  
Note that the computation of $\tpsipi$ and $\hpsipi$ can be computed via the MM algorithms described in the previous section, where the finite-dimensional parameters are not updated (\emph{i.e.,} $\pi_k^{[r]}=\pi_k$ for any iteration $r$). Hence, $\tpsipi$ and $\hpsipi$ are obtained by MM algorithms defined at iteration $r$ by
\begin{equation}\label{eq:Mh}
\bpsi^{[r]}=M^{(h)}_{\bpi}[\bpsi^{[r-1]}],
\end{equation}
and 
\begin{equation}\label{eq:Mhn}
\bpsi^{[r]}=M^{(h,n,\bpi)}[\bpsi^{[r-1]}],
\end{equation}
respectively, where $M^{(h)}_{\bpi}[\bpsi]:=M^{(h)}[\bpsi;\bpi,\bpi]$ and $M^{(h,n,\bpi)}[\bpsi]:=M^{(h,n)}[\bpsi;\bpi,\bpi]$.

The use of the operator $\argmin$, rather than $\inf$, in the definition of the profiling of $\lossh$ and $\lossnh$ is justified by the following lemma. Moreover, this lemma establishes that the infinite-dimensional parameters $\tpsipi$ and $\hpsipi$ are the unique fixed points of the MM algorithms defined respectively by \eqref{eq:Mh} and \eqref{eq:Mhn}, which optimize $\lossh$ and $\lossnh$ with the finite-dimensional parameters held fixed.
\begin{lemma}\label{lem:unicity}
Under Assumptions~\ref{ass:controlvariancetrue} and \ref{ass:controlvariancekernel}, for any $\bpi$ in the interior of $\mathcal{S}_K^r$
\begin{enumerate}
    \item the minimizer of $\lossnh(\bpi,\bpsi)$ with respect to $\bpsi\in\Psi_K(\mathcal{X})$ is unique and is the single fixed point of $M^{(h,n,\bpi)}[\bpsi]$ leading that
$M^{(h,n,\bpi)}[\bpsi]=\bpsi \Longleftrightarrow \bpsi=\hpsipi$.
\item the minimizer of $\lossh(\bpi,\bpsi)$ with respect to $\bpsi\in\Psi_K(\mathcal{X})$ is unique and is the single fixed point of $M^{(h,\bpi)}[\bpsi]$ leading that
$
M^{(h,\bpi)}[\bpsi]=\bpsi \Longleftrightarrow \bpsi=\tpsipi
$.
\end{enumerate}
\end{lemma}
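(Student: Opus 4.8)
The plan is to treat the two assertions in parallel, since they are the population ($\lossh$) and empirical ($\lossnh$) versions of one statement, the second obtained from the first by replacing $\Eg$ by $\Pn$; I would carry out the argument in the empirical case and note that every step transfers verbatim. Throughout I would use that $\bpi$ lies in the interior of $\mathcal{S}_K^r$, so that $\pi_k>0$ for every $k$: this makes the weights $\omega_{\bpi,\bpsi,k}^{(h)}$ well defined and the normalization by $\pi_k$ in $M^{(h,n,\bpi)}$ legitimate. Existence of a minimizer of $\lossnh(\bpi,\cdot)$ over $\Psi_K(\mathcal{X})$ I would obtain from lower semicontinuity together with the compactness built into the constraints $\|\ln\psi\|_{L_2}\le C_2$ and $\|(\ln\psi)''\|_{L^\infty}\le C_3$ (this is also guaranteed by the existence result of \cite{zhu2016theoretical}); the substance of the lemma is therefore the fixed-point characterization and its uniqueness.

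For the characterization I would first record the majorization underlying the MM map. At an anchor $\bpsi'$, Jensen's inequality applied to $-\ln\sum_k\pi_k\smooth\psi_k$ with the smoothed posterior weights $\omega_{\bpi,\bpsi',k}^{(h)}$ produces a surrogate $Q[\bpsi\mid\bpsi']\ge\lossnh(\bpi,\bpsi)$ that touches $\lossnh(\bpi,\cdot)$ at $\bpsi'$. Because $\ln\smooth_j\psi_{k,j}=\kernel_h\star\ln\psi_{k,j}$ is \emph{linear} in $\ln\psi_{k,j}$, minimizing $Q[\,\cdot\mid\bpsi']$ under the unit-mass constraints decouples over $(k,j)$ into maximizing a strictly concave entropy functional $\int V_{k,j}(u)\ln\psi_{k,j}(u)\,du$ with weight $V_{k,j}\ge 0$; the lower kernel bound $b_1(h)>0$ of Assumption~\ref{ass:controlvariancekernel}.\ref{ass:boundkernel} makes $V_{k,j}$ strictly positive, so this maximizer is \emph{unique} and equals the (mass-normalized) MM update $M^{(h,n,\bpi)}[\bpsi']$. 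Then for any minimizer $\bpsi^\dagger$ the chain $\lossnh(\bpi,\bpsi^\dagger)\le\lossnh(\bpi,M^{(h,n,\bpi)}[\bpsi^\dagger])\le Q[M^{(h,n,\bpi)}[\bpsi^\dagger]\mid\bpsi^\dagger]\le Q[\bpsi^\dagger\mid\bpsi^\dagger]=\lossnh(\bpi,\bpsi^\dagger)$ forces all inequalities to be equalities, so $\bpsi^\dagger$ attains the minimum of $Q[\,\cdot\mid\bpsi^\dagger]$ and, by uniqueness of that minimizer, $\bpsi^\dagger=M^{(h,n,\bpi)}[\bpsi^\dagger]$. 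Conversely, the Euler–Lagrange condition for $\lossnh(\bpi,\cdot)$ under the constraints $\int\psi_{k,j}=1$, with the Lagrange multipliers fixed by the mass-normalization, is exactly the equation $\bpsi=M^{(h,n,\bpi)}[\bpsi]$. Hence minimizers, fixed points, and stationary points coincide.

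The crux is uniqueness of this fixed point. Here I would exploit the bounds defining $\Psi(\mathcal{X}_j)$ together with Assumption~\ref{ass:controlvariancekernel}.\ref{ass:boundkernel}: they confine every smoothed subdensity $\smooth\psi_k$, and hence every weight $\omega_{\bpi,\bpsi,k}^{(h)}$, to a fixed compact subinterval of $(0,1)$ bounded away from $0$ and $1$, uniformly over $\Psi_K(\mathcal{X})$. This uniform positivity controls the sensitivity of $M^{(h,n,\bpi)}$ to its argument purely through the variation of the weights. Concretely I would aim for a contraction estimate: perturbing $\bpsi$ affects $M^{(h,n,\bpi)}[\bpsi]$ only through $\omega_{\bpi,\bpsi,k}^{(h)}$, whose dependence on $\bpsi$ is, by the positivity and kernel bounds, Lipschitz with a constant I would control and show to be strictly below one in the relevant log-density metric, so that Banach's theorem yields a single fixed point. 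An alternative route is to argue that two distinct fixed points would generate the same smoothed mixture $f^{(h)}_{\bpi,\bpsi}$ at the observations, and then to invoke identifiability of that mixture, which I would in turn derive from Assumption~\ref{ass:controlvariancetrue}.\ref{ass:indptlin} together with injectivity of the smoothing operator $\smooth_j$.

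I expect uniqueness to be the main obstacle. The objective is not jointly convex in the component densities — the log-sum-exp form of $\ln f^{(h)}_{\bpi,\bpsi}$ makes $\lossnh(\bpi,\cdot)$ concave in the log-densities — so uniqueness cannot be read off from convexity and must be extracted from the interplay of the strictly concave per-coordinate surrogate, the uniform positivity of the smoothed weights granted by Assumption~\ref{ass:controlvariancekernel}.\ref{ass:boundkernel}, and identifiability. The empirical case is the more delicate of the two, because $\lossnh$ pins down $f^{(h)}_{\bpi,\bpsi}$ only at $\bX_1,\dots,\bX_n$; the identifiability argument must therefore be routed through the finite vector $(f^{(h)}_{\bpi,\bpsi}(\bX_1),\dots,f^{(h)}_{\bpi,\bpsi}(\bX_n))$ and the fixed-point equation rather than through the full density, which is precisely where the positivity bounds of Assumption~\ref{ass:controlvariancekernel}.\ref{ass:boundkernel} do the essential work.
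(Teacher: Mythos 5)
Your majorization bookkeeping and the direction ``minimizer $\Rightarrow$ fixed point'' are sound (the paper gets this from \cite[Corollary 3.2]{zhu2016theoretical} rather than re-deriving the sandwich argument), but the heart of the lemma --- that the fixed point is \emph{unique} and that every fixed point is the global minimizer --- is exactly what you leave open, and neither of your two sketched routes closes it. The contraction route is unsupported: nothing in Assumption~\ref{ass:controlvariancekernel} (the bounds $b_1(h)\leq \kernel_h \leq b_2(h)$, the Lipschitz property of $\kernel_h$) yields a Lipschitz constant for $\bpsi\mapsto M^{(h,n,\bpi)}[\bpsi]$ that is strictly below one, and MM maps are not contractions in general; saying you ``would control and show'' the constant to be below one is the entire difficulty restated, not an argument. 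The identifiability route rests on the claim that two distinct fixed points ``would generate the same smoothed mixture $f^{(h)}_{\bpi,\bpsi}$ at the observations,'' which is nowhere justified: identifiability runs in the opposite direction (equal densities imply equal parameters), so it can only be invoked after you have shown that two candidate solutions induce the same density, which is precisely what is missing --- and for $\lossnh$ the data constrain $f^{(h)}_{\bpi,\bpsi}$ only at the $n$ sample points, so identifiability of the full density does not apply directly anyway. Relatedly, your conclusion that ``minimizers, fixed points, and stationary points coincide'' is an overstatement: you establish minimizer $\Rightarrow$ fixed point $\Rightarrow$ stationary point, but without a global (convexity-type) argument a stationary point of a non-convex functional need not be a global minimizer, and that reverse implication is the content of the lemma.

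The paper closes this gap with exactly the tool you rule out. It invokes lower semicontinuity and \emph{strict convexity} of the objective on the set $B$ of possible MM iterates (Lemma A3 of \cite{levine2011maximum}), together with boundedness from below by $-\ln b_2(h)$ (\cite[Lemma 4.1]{zhu2016theoretical}), to conclude that the MM sequence started from \emph{any} $\bpsi\in\Psi_K(\mathcal{X})$ converges to a global minimizer of $\lossnh(\bpi,\cdot)$; Assumption~\ref{ass:controlvariancetrue}.\ref{ass:indptlin} combined with the fixed, distinct, strictly positive proportions rules out label swapping, so this global minimizer $\hpsipi$ is unique; and then a second fixed point $\bar\bpsi\neq\hpsipi$ is impossible, since the constant MM sequence started at $\bar\bpsi$ would fail to converge to $\hpsipi$. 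Your observation that $\lossnh(\bpi,\cdot)$ is concave in the log-densities is correct as a statement about that parametrization, but it does not support the inference that ``uniqueness cannot be read off from convexity'': the convexity the paper uses is in the sense of Levine et al.'s Lemma A3 on $B$, and it is the single ingredient that simultaneously delivers global convergence of the iterates, uniqueness of the minimizer, and --- via the contradiction argument above --- uniqueness of the fixed point. Without it, or a genuine substitute, your proposal does not prove the lemma.
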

The following remark highlights that, as a consequence of Lemma~\ref{lem:unicity}, the MM algorithm defined by \eqref{eq:Mhn}, which updates only the infinite-dimensional parameters while keeping the finite-dimensional parameters fixed at $\bpi$, converges to the minimizer $\hpsipi$ for any initial value of the infinite-dimensional parameters.\begin{remark}\label{rq:unicity}
As a consequence of Lemma~\ref{lem:unicity}, we have for any $\bpi$ in the interior of $\mathcal{S}_K^r$
$$
\forall \bpsi\in\Psi_K(\mathcal{X}),\, \lim_{p \to \infty} M^{(h,n,\bpi)\{p\}}[\bpsi]=\hpsipi,
$$
where $M^{(h,n,\bpi)\{p\}}[\bpsi]=M^{(h,n,\bpi)}[M^{(h,n,\bpi)\{p-1\}}[\bpsi]]$ denotes $p$ compositions of function $M^{(h,n,\bpi)}[\bpsi]$.
\end{remark}
Note that a similar result can be established for $\tpsipi$, but it will not be used in the proof that establishes the rate of convergence of the estimator.

\section{Controlling the convergence of the infinite-dimensional estimates}\label{sec:nprate}
The objective is to derive a convergence rate for the infinite-dimensional estimators that depends solely on the sample size, the bandwidth, and the convergence rate of the finite-dimensional estimators. To this end, we begin with Lemma~\ref{lem:prel}, which shows that, when the proportions are fixed, the norm of the difference between the infinite-dimensional parameters at two successive iterations of the MM algorithm can be upper bounded by the difference in the loss function $\lossnh$ evaluated at these points. As a consequence of Remark~\ref{rq:unicity}, the norm of the difference between the initial value of the infinite-dimensional parameter and its estimator minimizing $\lossnh$ with fixed proportions can be controlled by the corresponding difference in the loss function. Moreover, since Remark~\ref{rq:unicity} ensures that any element of $\Psi_K(\mathcal{X})$ can be used as an initial value $\bpsi^{[0]}$, taking the true infinite-dimensional parameter $\bpsi^\star$ as a starting point yields, via Lemma~\ref{lem:inequality}, a bound on the norm of the difference between $\bpsi^\star$ and $\hpsipi$ in terms of the loss function evaluated at these points with fixed proportions. Finally, combining the uniform control of the difference between the empirical and theoretical versions of the loss function provided by Lemma~\ref{lem:deviation}, with a bound on the difference between $\lossh(\bpi,\bpsi)$ and $\lossh(\bpi^\star,\bpsi)$ that depends on the norm of the difference between $\bpi$ and $\bpi^\star$, Theorem~\ref{thm:rateL1} establishes a bound on the difference between $\bpsi^\star$ and $\hpsipi$ as a function of the sample size, the bandwidth, and the norm of the difference between $\bpi$ and $\bpi^\star$.

Using the definition of the mapping functions that are implied by the algorithm, Lemma~\ref{lem:prel} shows that, when the proportions are fixed, the norm of the difference between $\bpsi$ and the infinite dimensional parameters $M^{(h,n,\bpi)}[\bpsi]$ defined by the mapping of the MM algorithm can be upper bounded by the difference in the loss function $\lossnh$ evaluated at these points. Note that this results can be seen as an extension of \cite[Corollary 3.1 and Corollary 3.3]{zhu2016theoretical}.
\begin{lemma}\label{lem:prel}
Under Assumptions~\ref{ass:controlvariancetrue} and \ref{ass:controlvariancekernel}, we have for any $\bpi\in\mathcal{S}_K$
$$  \lossnh(\bpi,\bpsi) - \lossnh(\bpi,M^{(h,n,\bpi)}[\bpsi])  \geq \frac{1}{4} \sum_{k=1}^K  \pi_k\sum_{j=1}^J \| \psi_{k,j} - M_{k,j}^{(h,n,\bpi)}[\bpsi]\|^2_1,$$
   where $ M_{k,j}^{(h,n,\bpi)}[\bpsi]$ is the element $(k,j)$ of $M^{(h,n,\bpi)}[\bpsi]$ that correspond to the update of $\psi_{k,j}$ provided by one iteration of the MM algorithm with fixed proportions leading that $ M_{k,j}^{(h,n,\bpi)}[\bpsi]=M_{k,j}^{(h,n)}[\bpsi;\bpi,\bpi]$.
\end{lemma}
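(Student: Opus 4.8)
The plan is to convert the monotonicity of the MM map into a quantitative descent inequality by tracking the gap in its minorizing surrogate. Writing $\bpsi' := M^{(h,n,\bpi)}[\bpsi]$, the first step is to cancel the common factor $g^\star$ and express
\[
\lossnh(\bpi,\bpsi) - \lossnh(\bpi,\bpsi') = \frac{1}{n}\sum_{i=1}^n \ln \frac{f^{(h)}_{\bpi,\bpsi'}(\bX_i)}{f^{(h)}_{\bpi,\bpsi}(\bX_i)}.
\]
The device is then the smoothed posterior weights $\omega^{(h)}_{\bpi,\bpsi,k}$ of \eqref{eq:weights}, which sum to one over $k$ and satisfy $\pi_k\smooth\psi_k = \omega^{(h)}_{\bpi,\bpsi,k}\, f^{(h)}_{\bpi,\bpsi}$. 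Rewriting the ratio inside each logarithm as the convex combination $\sum_k \omega^{(h)}_{\bpi,\bpsi,k}(\bX_i)\,\smooth\psi'_k(\bX_i)/\smooth\psi_k(\bX_i)$ and invoking Jensen's inequality (concavity of the logarithm) lower-bounds each summand by $\sum_k \omega^{(h)}_{\bpi,\bpsi,k}(\bX_i)\ln\{\smooth\psi'_k(\bX_i)/\smooth\psi_k(\bX_i)\}$.

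Second, I would use that the log-smoother is linear, so $\ln\smooth\psi'_k(\bX_i) - \ln\smooth\psi_k(\bX_i) = \sum_{j} \int \kernel_h(X_{ij}-u)\ln\{\psi'_{k,j}(u)/\psi_{k,j}(u)\}\,du$. Interchanging the finite sum over $i$ with the integral (legitimate, since $\kernel_h$ is a bounded density and the log-densities are integrable on the compact $\mathcal{X}_j$) and recognising from the definition of the map that $\tfrac1n\sum_i \omega^{(h)}_{\bpi,\bpsi,k}(\bX_i)\kernel_h(X_{ij}-\cdot) = \pi_k\, M^{(h,n,\bpi)}_{k,j}[\bpsi] = \pi_k\,\psi'_{k,j}$, the bound collapses to
\[
\lossnh(\bpi,\bpsi) - \lossnh(\bpi,\bpsi') \;\ge\; \sum_{k=1}^K \pi_k \sum_{j=1}^J \int \psi'_{k,j}(u)\,\ln\frac{\psi'_{k,j}(u)}{\psi_{k,j}(u)}\,du,
\]
which is the analogue, in the present smoothed and empirical setting, of the intermediate estimate behind \cite[Corollaries 3.1 and 3.3]{zhu2016theoretical}.

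Finally, it remains to pass from this divergence bound to the squared $L_1$ distances, which I would do by a Pinsker/Hellinger argument (via $\KL(a,b)\ge\int(\sqrt a-\sqrt b)^2$ followed by Cauchy–Schwarz) producing the constant $\tfrac14$. The delicate point — and the step I expect to be the main obstacle — is that the \emph{profiled} map divides by the fixed $\pi_k$ rather than by $P^{(h,n)}_k[\bpi,\bpsi]$, so $\psi'_{k,j}=M^{(h,n,\bpi)}_{k,j}[\bpsi]$ is only a sub-density, of mass $P^{(h,n)}_k[\bpi,\bpsi]/\pi_k$, which need not equal one away from a fixed point; a naive divergence-to-$L_1$ bound then carries an unwanted dependence on that mass. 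I would absorb it through the balance identity $\sum_k \pi_k\int\psi'_{k,j} = \sum_k P^{(h,n)}_k[\bpi,\bpsi] = 1 = \sum_k\pi_k$, which holds because the weights $\omega^{(h)}_{\bpi,\bpsi,k}$ sum to one over $k$, to rewrite $\sum_k\pi_k\int\psi'_{k,j}\ln(\psi'_{k,j}/\psi_{k,j})$ \emph{exactly} as the $\pi_k$-weighted generalised divergence $\sum_k\pi_k\,\KL(\psi'_{k,j},\psi_{k,j})$ before applying the generalised Pinsker/Hellinger inequality term by term. Verifying that the constant so obtained is no worse than $\tfrac14$ uniformly over $\bpi\in\mathcal{S}_K$, given the boundedness of the densities in $\Psi(\mathcal{X}_j)$, is the portion of the argument demanding the most care.
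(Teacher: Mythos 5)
Your proposal reproduces the paper's own proof essentially step for step through the intermediate divergence bound: cancelling $g^\star$, rewriting each log-ratio as a convex combination via the smoothed posterior weights, Jensen's inequality, the linearity of the log-smoother, the sum/integral interchange, and the identification $\tfrac1n\sum_i\omega^{(h)}_{\bpi,\bpsi,k}(\bX_i)\kernel_h(X_{ij}-\cdot)=\pi_k M^{(h,n,\bpi)}_{k,j}[\bpsi]$ are exactly the steps in the paper, and they are correct. Moreover, the ``delicate point'' you isolate is real, and here you are \emph{more} careful than the source: the paper's proof asserts $\int_{\mathcal{X}_j}M^{(h,n,\bpi)}_{k,j}[\bpsi](u)\,du=1$ before invoking Pinsker, whereas this mass actually equals $P^{(h,n)}_k[\bpi,\bpsi]/\pi_k$ (the profiled map divides by the fixed $\pi_k$, not by the updated proportion), so it equals one only in special situations such as a fixed point of the map, not for arbitrary $\bpsi$. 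Your aggregate balance identity $\sum_k\pi_k\int\psi'_{k,j}=\sum_k P^{(h,n)}_k[\bpi,\bpsi]=1$ is correct and does convert the bound exactly into $\sum_k\pi_k\,\KL(\psi'_{k,j},\psi_{k,j})$ with the generalized divergence.

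However, the step you defer as ``demanding the most care'' is a genuine gap, not a routine verification, and it cannot be closed along the route you sketch. The inequality $\KL(a,b)\geq\tfrac14\|a-b\|_1^2$ with the generalized divergence is simply false when $\|a\|_1$ is far from one: taking $a=6b$ with $b$ a density gives $\KL(a,b)=6\ln 6-5\approx 5.75$ while $\tfrac14\|a-b\|_1^2=\tfrac{25}{4}=6.25$. Since the mass $P^{(h,n)}_k[\bpi,\bpsi]/\pi_k$ can be as large as $1/\pi_k$, this regime is not excluded uniformly over $\bpi\in\mathcal{S}_K$, and the aggregate constraint $\sum_k\pi_k\|\psi'_{k,j}\|_1=1$ does not rescue a term-by-term Pinsker argument either: with $K=2$, $\pi_1=0.02$, $\psi'_{1,j}=20\,\psi_{1,j}$ and $\psi'_{2,j}=(0.6/0.98)\,\psi_{2,j}$, the weighted sum $\sum_k\pi_k\bigl[\KL(\psi'_{k,j},\psi_{k,j})-\tfrac14\|\psi'_{k,j}-\psi_{k,j}\|_1^2\bigr]$ is negative, even though the constraint holds. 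So any per-component Csisz\'ar--Kullback--Pinsker bound (whose constant necessarily degrades with the masses) leaves the claimed uniform constant $\tfrac14$ out of reach; completing the proof requires either controlling the masses $P^{(h,n)}_k[\bpi,\bpsi]/\pi_k$ for the $\bpsi$ to which the lemma is actually applied (e.g.\ along the MM iterates used in Lemma~\ref{lem:inequality}), or retaining some of the slack discarded at the Jensen step. In short, your write-up matches the paper everywhere except at the one step where the paper's own proof rests on an identity that is false in general, and your proposed repair, while an improvement in rigor, does not close that step.
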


Since the MM algorithm with fixed proportions converges to $\hpsipi$ from any starting value of the infinite-dimensional parameters (see Remark~\ref{rq:unicity}), this holds in particular when starting from $\bpsi^\star$. Exploiting this property together with Lemma~\ref{lem:prel}, the following lemma provides an upper bound on the sum of the squared $L_1$ norms of the differences between $\psi^\star_{k,j}$ and $\widehat{\psi}^{(h,n,\bpi)}_{k,j}$ in terms of the difference between the empirical loss function evaluated at $\bpsi^\star$ and at $\hpsipi$, with $\bpi$ fixed.

\begin{lemma}\label{lem:inequality}
Under Assumptions~\ref{ass:controlvariancetrue} and \ref{ass:controlvariancekernel}, we have for any $\bpi\in\mathcal{S}_K$
   $$ \lossnh(\bpi,\bpsi^\star) - \lossnh(\bpi,\hpsipi) \geq \frac{1}{4} \sum_{k=1}^K \sum_{j=1}^J \|\psi_{k,j}^\star - \widehat{\psi}^{(h,n,\bpi)}_{k,j}\|^2_1.$$
\end{lemma}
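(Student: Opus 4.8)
The plan is to run the proportion-fixed MM map starting from the true infinite-dimensional parameter and to accumulate the per-iteration decrease supplied by Lemma~\ref{lem:prel}. Concretely, I would set $\bpsi^{[0]}=\bpsi^\star$ and $\bpsi^{[r]}=M^{(h,n,\bpi)}[\bpsi^{[r-1]}]$. By Remark~\ref{rq:unicity}, $\bpsi^{[r]}\to\hpsipi$ in $\Psi_K(\mathcal{X})$; combined with the monotonicity of the MM map and the continuity of $\lossnh(\bpi,\cdot)$, the sequence $\lossnh(\bpi,\bpsi^{[r]})$ decreases to $\lossnh(\bpi,\hpsipi)$. Writing the total decrease as a telescoping sum over $r$ and applying Lemma~\ref{lem:prel} to each consecutive pair then yields
$$\lossnh(\bpi,\bpsi^\star) - \lossnh(\bpi,\hpsipi) = \sum_{r\ge 0}\bigl[\lossnh(\bpi,\bpsi^{[r]})-\lossnh(\bpi,\bpsi^{[r+1]})\bigr]\ge\frac14\sum_{r\ge 0}\sum_{k=1}^K\pi_k\sum_{j=1}^J\bigl\|\psi^{[r]}_{k,j}-\psi^{[r+1]}_{k,j}\bigr\|_1^2.$$

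The crux is to pass from this sum of squared increments between consecutive iterates to the single squared distance $\sum_{k,j}\|\psi^\star_{k,j}-\widehat\psi^{(h,n,\bpi)}_{k,j}\|_1^2$ between the endpoints. A naive triangle inequality is useless, since squaring the telescoped $L_1$-increments discards the cross terms and points the comparison the wrong way. Instead I would exploit the structure of the MM update: each step $\bpsi^{[r]}\mapsto\bpsi^{[r+1]}$ is an information projection (the minimizer of the affine majorizing surrogate under the density constraint), so the surrogate decrease equals a weighted sum of component divergences $\sum_{k,j}\bar w_k^{[r]}\,\KL(\psi^{[r+1]}_{k,j},\psi^{[r]}_{k,j})$, with $\bar w_k^{[r]}=n^{-1}\sum_i\omega^{(h)}_{\bpi,\bpsi^{[r]},k}(\bX_i)$. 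Using the convexity of $\lossnh(\bpi,\cdot)$ in $\bpsi$, these projections should satisfy a Pythagorean-type relation $\KL(\widehat\psi_{k,j},\psi^{[r]}_{k,j})=\KL(\widehat\psi_{k,j},\psi^{[r+1]}_{k,j})+\KL(\psi^{[r+1]}_{k,j},\psi^{[r]}_{k,j})$, here writing $\widehat\psi_{k,j}$ for the $(k,j)$ coordinate of $\hpsipi$. Telescoping this relation collapses the sum over $r$ into the single divergence $\KL(\widehat\psi_{k,j},\psi^\star_{k,j})$, the residual $\KL(\widehat\psi_{k,j},\psi^{[r]}_{k,j})$ vanishing as $\bpsi^{[r]}\to\hpsipi$. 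A final application of Pinsker's inequality turns each component divergence into $\tfrac12\|\widehat\psi_{k,j}-\psi^\star_{k,j}\|_1^2$, which is where the constant $\tfrac14$ comes from.

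The main obstacle I anticipate is making this collapse rigorous when the surrogate weights $\bar w_k^{[r]}$ vary from one iteration to the next, so the Pythagorean identities are not taken against a fixed geometry; one must argue that, because the proportions are held at $\bpi$ and each $\bpsi^{[r]}$ is constrained to integrate to one, the fixed-point relation forces $\bar w_k^{\hpsipi}=\pi_k$, and control the drift of the intermediate weights along the convergent trajectory, together with the summability needed to interchange the limit and the sum. A careful treatment of these proportion factors is also what removes the weights $\pi_k$ of Lemma~\ref{lem:prel} from the final bound: the normalizing factor $1/\tilde\pi_k$ built into $M^{(h,n,\bpi)}$ cancels the $\pi_k$ appearing in the per-step inequality, leaving the clean coordinate-wise sum $\tfrac14\sum_{k,j}\|\psi^\star_{k,j}-\widehat\psi^{(h,n,\bpi)}_{k,j}\|_1^2$. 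Verifying this cancellation is the technical heart of the argument.
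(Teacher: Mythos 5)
Your opening moves coincide with the paper's proof: run the fixed-$\bpi$ MM iteration from $\bpsi^{[0]}=\bpsi^\star$, telescope the loss decrease over iterations, and apply Lemma~\ref{lem:prel} to each consecutive pair, passing to the limit via Remark~\ref{rq:unicity}. The divergence occurs at the bridging step. The paper closes the argument by claiming, "using the triangular inequality," that $\sum_{r=0}^{q-1}\|M^{(h,n,\bpi)\{r\}}_{k,j}[\bpsi^\star]-M^{(h,n,\bpi)\{r+1\}}_{k,j}[\bpsi^\star]\|_1^2 \geq \|\psi^\star_{k,j}-M^{(h,n,\bpi)\{q\}}_{k,j}[\bpsi^\star]\|_1^2$, and then lets $q\to\infty$. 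You reject exactly this step (your observation that squaring the telescoped increments discards the cross terms and points the comparison the wrong way is a legitimate worry about that inequality: $q$ equal increments of length $d$ give a sum of squares $qd^2$ against an endpoint distance squared of $q^2d^2$), and you substitute a different mechanism. That substitute is where your proposal has a genuine gap.

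The gap is the Pythagorean relation $\KL(\widehat{\psi}_{k,j},\psi^{[r]}_{k,j})=\KL(\widehat{\psi}_{k,j},\psi^{[r+1]}_{k,j})+\KL(\psi^{[r+1]}_{k,j},\psi^{[r]}_{k,j})$, which you assert should follow from convexity of $\lossnh(\bpi,\cdot)$. This identity is equivalent to $\int (\widehat{\psi}_{k,j}-\psi^{[r+1]}_{k,j})\ln(\psi^{[r+1]}_{k,j}/\psi^{[r]}_{k,j})=0$, and nothing in the structure of the update supplies this orthogonality. Csisz\'ar-type Pythagorean identities require $\psi^{[r+1]}_{k,j}$ to be the I-projection of $\psi^{[r]}_{k,j}$ onto a \emph{fixed linear family containing the comparison point} $\widehat{\psi}_{k,j}$; here $\psi^{[r+1]}_{k,j}$ minimizes a surrogate whose weights $\omega^{(h)}_{\bpi,\bpsi^{[r]},k}(\bX_i)$ change with the iterate, $\widehat{\psi}_{k,j}$ lies in no distinguished constraint set of that surrogate, and the smoothing operator enters nonlinearly. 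Convexity of the loss in $\bpsi$ lives in a different geometry and yields nothing about these KL cross terms; even the inequality version (which would suffice to telescope) is unsupported. Without it, your route faces exactly the difficulty you diagnosed in the $L_1$ setting, merely relocated to KL geometry: a sum of per-step divergences $\sum_r \KL(\psi^{[r+1]}_{k,j},\psi^{[r]}_{k,j})$ can be far smaller than the endpoint divergence $\KL(\widehat{\psi}_{k,j},\psi^\star_{k,j})$, again because many small steps in one direction are not controlled. Your plan to handle drifting surrogate weights $\bar w^{[r]}_k$ does not address this. Finally, the claimed cancellation of the $\pi_k$ weights by the $1/\tilde\pi_k$ normalizer is unfounded: that normalizer is already consumed in making $M^{(h,n,\bpi)}_{k,j}[\bpsi]$ a density inside the proof of Lemma~\ref{lem:prel}, whose per-step bound carries the factor $\pi_k$; any telescoped version retains it, and indeed the bound the paper actually derives (and all that Theorem~\ref{thm:rateL1} needs, at the price of a factor $1/\min_k\pi_k$) is the $\pi_k$-weighted one.
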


We are now in a position to derive the convergence rate of any estimator of the univariate component densities in the mixture model, under the assumption that the finite-dimensional parameter is fixed to some value $\bpi$ (not necessarily equal to the true value $\bpi^\star$).
\begin{theorem}\label{thm:rateL1}
Let $\mathcal{B}(\bpi^\star)$ be the ball centered in $\bpi^\star$ with radius equal to $\min \pi^\star_k/2$.
 Under Assumptions~\ref{ass:controlvariancetrue}, \ref{ass:controlvariancekernel}, we have   
$$
\forall \bpi\in\mathcal{B}(\bpi^\star),\; \sum_{k=1}^K \sum_{j=1}^J \| \psi^\star_{k,j} - \widehat{\psi}^{(h,n,\bpi)}_{k,j}\|^2_1 = O_\mathbb{P}( n^{-1/2}h^{-1/2}+h^2+\|\bpi - \bpi^\star\|_1).
$$
\end{theorem}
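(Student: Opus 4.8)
The plan is to combine the algorithmic inequality of Lemma~\ref{lem:inequality} with the uniform deviation bound of Lemma~\ref{lem:deviation} and a Lipschitz-type control of the population loss in $\bpi$. The left-hand side of Lemma~\ref{lem:inequality}, namely $\lossnh(\bpi,\bpsi^\star) - \lossnh(\bpi,\hpsipi)$, is exactly the quantity I want to upper bound, since it dominates $\tfrac14\sum_{k,j}\|\psi^\star_{k,j} - \widehat\psi^{(h,n,\bpi)}_{k,j}\|_1^2$. So the whole argument reduces to showing that this empirical loss difference is $O_\mathbb{P}(n^{-1/2}h^{-1/2} + h^2 + \|\bpi - \bpi^\star\|_1)$, uniformly over $\bpi\in\mathcal{B}(\bpi^\star)$.

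First I would pass from the empirical loss to the population loss using Lemma~\ref{lem:deviation}: writing $\lossnh(\bpi,\bpsi^\star) - \lossnh(\bpi,\hpsipi) = [\lossnh(\bpi,\bpsi^\star) - \lossh(\bpi,\bpsi^\star)] + [\lossh(\bpi,\bpsi^\star) - \lossh(\bpi,\hpsipi)] + [\lossh(\bpi,\hpsipi) - \lossnh(\bpi,\hpsipi)]$, the first and third brackets are each $O_\mathbb{P}(n^{-1/2}h^{-1/2})$ by the uniform-in-$(\bpi,\bpsi)$ control, since $\bpsi^\star$ and $\hpsipi$ both lie in $\Psi_K(\mathcal{X})$. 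It remains to bound the middle (purely deterministic) term $\lossh(\bpi,\bpsi^\star) - \lossh(\bpi,\hpsipi)$. Here I would decompose by inserting $\bpi^\star$ and $\tpsipis$ (the population minimizer at $\bpi^\star$): I expect to write this as a difference at the true proportions plus a term measuring sensitivity of $\lossh$ to $\bpi$. For the first part, note $\lossh(\bpi^\star,\bpsi^\star) - \lossh(\bpi^\star,\hpsipi) \le 0$ up to the bias, because $\bpsi^\star$ is nearly the minimizer at $h\to 0$; more precisely, by Lemma~\ref{lem:taylor} the $h$-smoothing shifts the population minimizer only by $O(h^2)$ in loss value, which produces the $h^2$ term. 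For the sensitivity in $\bpi$, I would establish that $\bpi\mapsto\lossh(\bpi,\bpsi)$ is Lipschitz in the $\ell_1$ norm uniformly over $\bpsi\in\Psi_K(\mathcal{X})$, with a constant controlled by the bounds $b_1(h),b_2(h)$ and the uniform bounds in $\Psi_K$; this gives the $\|\bpi - \bpi^\star\|_1$ term.

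The Lipschitz-in-$\bpi$ estimate is where I would be most careful. The integrand of $\lossh$ involves $\ln f^{(h)}_{\bpi,\bpsi}$, and differentiating in $\pi_k$ produces $\smooth\psi_k / f^{(h)}_{\bpi,\bpsi}$, which is exactly the smoothed posterior weight $\omega^{(h)}_{\bpi,\bpsi,k}$ bounded in $[0,1]$, so the gradient in $\bpi$ is uniformly bounded and the Lipschitz constant does not blow up as $h\to 0$. Crucially, the restriction $\bpi\in\mathcal{B}(\bpi^\star)$ keeps all proportions bounded away from zero by $\min_k\pi^\star_k/2$, which prevents $f^{(h)}_{\bpi,\bpsi}$ from degenerating and keeps the logarithm well-behaved. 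I would also use that $\hpsipi$ is the exact minimizer at the pair $(\bpi,h,n)$, which guarantees $\lossnh(\bpi,\hpsipi) \le \lossnh(\bpi,\bpsi^\star)$ and hence the whole left-hand side is automatically nonnegative — so only an upper bound is needed and no lower-bound subtlety arises.

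\textbf{The main obstacle} I anticipate is keeping every estimate \emph{uniform over} $\bpi\in\mathcal{B}(\bpi^\star)$ simultaneously, rather than for each fixed $\bpi$: the stochastic terms must be controlled by the supremum bound of Lemma~\ref{lem:deviation} (which is already uniform in both arguments, so this is fine), but the deterministic decomposition must use a Lipschitz constant that is uniform in $\bpi$ and does not depend on $h$ in a way that spoils the rate. Provided the smoothed weights stay in $[0,1]$ and the proportions stay bounded below on $\mathcal{B}(\bpi^\star)$, these constants are genuinely uniform, and assembling the three pieces yields the claimed $O_\mathbb{P}(n^{-1/2}h^{-1/2} + h^2 + \|\bpi - \bpi^\star\|_1)$ bound after dividing through by the constant $1/4$ from Lemma~\ref{lem:inequality}.
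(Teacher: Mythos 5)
Your proposal is correct and follows essentially the same route as the paper's proof: Lemma~\ref{lem:inequality} to reduce the problem to the empirical loss gap, Lemma~\ref{lem:deviation} applied twice to pass to the population loss, a Lipschitz-in-$\bpi$ bound on $\lossh$ uniform over $\bpsi\in\Psi_K(\mathcal{X})$, and Lemma~\ref{lem:taylor} together with the minimizer property of $\tpsipis$ to produce the $O(h^2)$ term. One detail to tighten: $\frac{\partial}{\partial \pi_k}\ln f^{(h)}_{\bpi,\bpsi}=\smooth\psi_k/f^{(h)}_{\bpi,\bpsi}=\omega^{(h)}_{\bpi,\bpsi,k}/\pi_k$ rather than $\omega^{(h)}_{\bpi,\bpsi,k}$ itself, so the Lipschitz constant is of order $1/\min_k\pi_k$ — which is precisely why the restriction to $\mathcal{B}(\bpi^\star)$ matters, exactly as in the paper's bound $\smooth\psi_k/\sum_\ell\pi_\ell^\star\smooth\psi_\ell\leq 1/\pi_k^\star$ combined with $|\ln(1+u)|\leq 2|u|$.
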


\section{Controlling the convergence of the finite-dimensional estimates}\label{sec:rate}
\subsection{Three score functions with smoothing}
To study the asymptotic behavior of $\hpi$, we need to introduced three score functions obtained after smoothing: the naive score function with smoothing, the nuisance  score function with smoothing and the efficient  score function with smoothing (see \cite{kosorok2008introduction} for a general introduction of these three score functions). Noting that $\bpi\in\mathcal{S}_K^r$, there is a linear constraints between the elements of the vector, therefore all the partial derivative as considered only with respect to $\pi_k$ with $k=1,\ldots,K-1$ and where $\pi_K=1 - \sum_{k=1}^{K-1}\pi_k$
Let $\bscore_{\bpi,\bpsi}^{(h)}(\bx)=(\score_{\bpi,\bpsi,1}^{(h)}(\bx),\ldots,\score_{\bpi,\bpsi,K-1}^{(h)}(\bx))^\top\in\mathbb{R}^{K-1}$ be the \emph{naive score function with smoothing} associated to the smoothed log-likelihood such that  $\score_{\bpi,\bpsi,k}^{(h)}(\bx)$ is defined as the partial derivative of $\ln f^{(h)}_{\bpi,\bpsi}(\bx)$ with respect to $\pi_k$ leading
$$
\score_{\bpi,\bpsi,k}^{(h)} = \frac{\partial}{\partial \pi_k} \ln f^{(h)}_{\bpi,\bpsi}.
$$
Hence, the naive score function  with smoothing is the $(K-1)$-dimensional vector where the element $k$ is defined by
\begin{equation}\label{eq:naive}
\score_{\bpi,\bpsi,k}^{(h)}(\bx)  = \frac{\smooth \psi_k(\bx) - \smooth \psi_K(\bx)}{f^{(h)}_{\bpi,\bpsi}(\bx)}.    
\end{equation}
The naive score function reflects the direction in which the smoothed log-likelihood increases the most when only the finite-dimensional parameter is perturbed, without accounting for the variability introduced by the infinite dimensional parameter that is unknown and thus needs to be estimated. As a result, it fails to capture the full uncertainty of the estimation problem and is not sufficient for stating the rate of convergence of the estimators of the proportions. Therefore, we need to introduce the nuisance score function with smoothing that captures the sensitivity of the smoothed log-likelihood with respect to infinitesimal perturbations of the infinite-dimensional parameter, while keeping the finite-dimensional parameter fixed. It quantifies how the smoothed log-likelihood reacts to small variations in the infinite-dimensional parameters. In a sense, it characterizes the influence of $\bpsi$ on the estimation procedure. The \emph{nuisance score function with smoothing} at $(\bpi,\bpsi)$ in direction $\bar\bpsi - \bpsi$, denoted by $\noisy_{\bpi,\bpsi}^{(h)}[\bar\bpsi-\bpsi]$, is defined as the Gateaux derivative of the smoothed log-likelihood  at $(\pi,\bpsi)$ in direction $\bar\bpsi-\bpsi$ leading that
$$
\noisy_{\bpi,\bpsi}^{(h)}[\bar\bpsi-\bpsi] =\frac{\partial}{\partial t} \ln f^{(h)}_{\bpi,\bpsi + t(\bar\bpsi - \bpsi)} \Big|_{t=0}.
$$
The Gateaux derivative of $\smooth_j \psi_{k,j}$ in direction $\bar\psi_{k,j} - \psi_{k,j}$, denoted by $\partial \smooth_j\psi_{k,j}[\bar\psi_{k,j} - \psi_{k,j}]$ is defined as
$$\partial \smooth_j\psi_{k,j}[\bar\psi_{k,j} - \psi_{k,j}] = \frac{\partial}{\partial t}\ \smooth_j[\psi_{k,j} + t(\bar\psi_{k,j}-\psi_{k,j})](x_j)  \Big|_{t=0}.$$ Therefore, we have
\begin{equation*}\label{eq:Gat}
\partial \smooth_j\psi_{k,j}[\bar\psi_{k,j} - \psi_{k,j}](x_j) = \left( \left[\kernel_h \star \frac{\bar\psi_{k,j}-\psi_{k,j}}{\psi_{k,j}}\right](x_j) \right) \smooth_j\psi_{k,j}(x_j).
\end{equation*}
Hence, using the chain rule and the product rule, the nuisance score function with smoothing is defined by
$$
\noisy_{\bpi,\bpsi}^{(h)}[\bar\bpsi-\bpsi](\bx) =\sum_{k=1}^K  \omega_{\bpi,\bpsi,k}^{(h)}(\bx)  \zeta_{\bpsi,\bar\bpsi,k}^{(h)}(\bx),
$$
 where 
\[
\omega_{\bpi,\bpsi,k}^{(h)}(\bx)=\frac{\pi_{k}\smooth\psi_{k}(\bx)}{\sum_{k=1}^{K}\pi_{k}\smooth\psi_{k}(\bx)}
\] 
is a smoothed version of the posterior probability of  observation $\bx$ arising from the component $k$ given the parameters $(\bpi,\bpsi)$ and the bandwidth $h$ that are defined by \eqref{eq:weights}. At the same time, 
\begin{equation}\label{eq:zeta}
\zeta_{\bpsi,\bar\bpsi,k}^{(h)}(\bx)=\sum_{j=1}^J \left[\kernel_h \star \frac{\bar\psi_{k,j}-\psi_{k,j}}{\psi_{k,j}}\right](x_j) . 
\end{equation}
We can now define the \emph{tangent cone with smoothing} that characterizes the possible directions in which the infinite-dimensional parameter can vary infinitesimally, under the model constraints. Hence, it is defined as
$$
\mathcal{T}^{(h)}=\left\{ \bx \mapsto \noisy_{\bpi^\star,\bpsi^\star}^{(h)}[\bpsi - \bpsi^\star] ; \bpsi\in\Psi_K(\mathcal{X})\right\}.
$$
The \emph{efficient score function with smoothing} $\beffscore_{\bpi^\star,\bpsi^\star}^{(h)}=(\effscore_{\bpi^\star,\bpsi^\star,1}^{(h)},\ldots,\effscore_{\bpi^\star,\bpsi^\star,K}^{(h)})^\top\in\mathbb{R}^{K-1}$  corresponds to the component of the naive score function with smoothing evaluated at the true parameters that is orthogonal to all variations of the infinite-dimensional parameter, as characterized by the tangent cone $\mathcal{T}^{(h)}$. Hence, it represents the part of the score that carries pure information about $\bpi$, uncontaminated by the influence of $\bpsi$.  It is defined as the projection of each coordinate of the naive score, in the sense $L_2(g^\star)$, on the tangent cone $\mathcal{T}^{(h)}$.  Hence, we have for any $k=1,\ldots,K-1$
\begin{equation}\label{eq:effscore}
\effscore_{\bpi^\star,\bpsi^\star,k}^{(h)} =\score_{\bpi^\star,\bpsi^\star,k}^{(h)}  - \noisy_{\bpi^\star,\bpsi^\star}^{(h)}[\check\bpsi^{(h)} - \bpsi^\star],
\end{equation}
where $\check\bpsi^{(h)}\in\Psi_K(\mathcal{X})$  satisfies for any $\bpsi\in\Psi_K(\mathcal{X})$ and any $k=1,\ldots,K$
\begin{equation}\label{eq:condproj}
 \Eg\left[\left(\score_{\bpi^\star,\bpsi^\star,k}^{(h)}(\bX_1) - \noisy_{\bpi^\star,\bpsi^\star}^{(h)}[\check\bpsi^{(h)}  - \bpsi^\star](\bX_1)\right) \noisy_{\bpi^\star,\bpsi^\star}^{(h)}[\bpsi - \bpsi^\star](\bX_1) \right]=0.
\end{equation}
 
In particular,  the \emph{asymptotic efficient score function} (\emph{i.e.,} efficient score function with smoothing when the smoothing vanished)  is defined by
$$
\beffscore_{\bpi^\star,\bpsi^\star} = \lim_{h\to 0}\beffscore_{\bpi^\star,\bpsi^\star}^{(h)}.
$$
Note that, by definition, we have
$$
\Eg [\beffscore_{\bpi^\star,\bpsi^\star}(\bX)] = \boldsymbol{0}_K.
$$
Similarly,   the \emph{asymptotic efficient Fisher information matrix} (\emph{i.e.,} when the smoothing vanished) and 
\begin{equation}\label{eq:fisher}
\befffishers= \Eg[\beffscore_{\bpi^\star,\bpsi^\star}(\bX)\beffscore_{\bpi^\star,\bpsi^\star}^\top(\bX)].
\end{equation}
To establish the rate of convergence for our estimators, the following result is important.
\begin{lemma} \label{lem:invert}
Under Assumptions~\ref{ass:controlvariancetrue}-\ref{ass:controlvariancekernel}, the asymptotic efficient Fisher information matrix $\befffishers$ is invertible
\end{lemma}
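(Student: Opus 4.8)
The plan is to show that the asymptotic efficient Fisher information matrix $\befffishers$ is invertible by establishing that it is positive definite, which is equivalent to showing that no nontrivial linear combination of the coordinates of the asymptotic efficient score $\beffscore_{\bpi^\star,\bpsi^\star}$ vanishes almost surely under $g^\star$. Concretely, since $\befffishers = \Eg[\beffscore_{\bpi^\star,\bpsi^\star}\beffscore_{\bpi^\star,\bpsi^\star}^\top]$ is a Gram matrix, for any $\bu\in\mathbb{R}^{K-1}$ we have $\bu^\top\befffishers\bu = \Eg[(\bu^\top\beffscore_{\bpi^\star,\bpsi^\star}(\bX))^2]\geq 0$, with equality if and only if $\bu^\top\beffscore_{\bpi^\star,\bpsi^\star}(\bX)=0$ almost surely. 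The goal therefore reduces to proving that $\bu^\top\beffscore_{\bpi^\star,\bpsi^\star}=0$ $g^\star$-a.s. forces $\bu=\boldsymbol{0}$.

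\smallskip
\noindent\textbf{First I would} pass to the limit $h\to 0$ and identify the unsmoothed efficient score as the ordinary $L_2(g^\star)$-residual of the naive score after projecting out the tangent space. Taking $h\to 0$ in \eqref{eq:naive}, the $k$-th coordinate of the naive score becomes the familiar mixture score $(\psi_k^\star - \psi_K^\star)/g^\star$, while the nuisance tangent cone $\mathcal{T}^{(h)}$ converges to the closed linear span of the directions $\bx\mapsto\sum_{k}\omega_{\bpi^\star,\bpsi^\star,k}(\bx)\sum_j a_{k,j}(x_j)$ obtained from \eqref{eq:zeta} as the perturbations $(\bar\psi_{k,j}-\psi_{k,j})/\psi_{k,j}$ range over admissible (mean-zero within each coordinate) functions $a_{k,j}\in L_2(\mathcal{X}_j)$. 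The defining orthogonality \eqref{eq:condproj} persists in the limit, so $\beffscore_{\bpi^\star,\bpsi^\star,k}$ is exactly the orthogonal projection of the $k$-th naive score coordinate onto the orthocomplement of this limiting tangent space.

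\smallskip
\noindent\textbf{The key step} is then a linear-independence argument. Suppose $\bu^\top\beffscore_{\bpi^\star,\bpsi^\star}=0$ a.s. By the projection construction, $\bu^\top\bscore_{\bpi^\star,\bpsi^\star}$ would then lie entirely in the nuisance tangent space, i.e. there exist coordinatewise mean-zero functions $a_{k,j}$ with
$$
\sum_{k=1}^{K-1}u_k\,\frac{\psi_k^\star(\bx)-\psi_K^\star(\bx)}{g^\star(\bx)}
=\sum_{k=1}^K\omega_{\bpi^\star,\bpsi^\star,k}(\bx)\sum_{j=1}^J a_{k,j}(x_j)
\qquad g^\star\text{-a.s.}
$$
Clearing the common denominator $g^\star=\sum_k\pi_k^\star\psi_k^\star$ and using $\psi_k^\star(\bx)=\prod_j\psi_{k,j}^\star(x_j)$, this becomes a polynomial-type identity among products of univariate densities. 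I would exploit Assumption~\ref{ass:controlvariancetrue}.\ref{ass:indptlin} — linear independence of $\{\psi_{1,j}^\star,\dots,\psi_{K,j}^\star\}$ along at least three coordinates — together with the separability of the tangent directions (each $a_{k,j}$ depends on a single coordinate $x_j$) to force all $u_k=0$. The mechanism is that the left-hand side, after multiplying through, is a genuine nontrivial combination of distinct product densities, whereas the right-hand side can be absorbed into reparametrizations of the component densities; identifiability of $(\bpi^\star,\bpsi^\star)$ up to label switching, which Assumption~\ref{ass:controlvariancetrue} guarantees via \cite{allmanAOS09}, rules out any such nontrivial relation. This is precisely the statement that the true proportions are locally identifiable in a neighborhood of $\bpsi^\star$ along directions orthogonal to the nuisance scores.

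\smallskip
\noindent\textbf{The hard part will be} making the limiting tangent-space argument rigorous: I must verify that $\mathcal{T}^{(h)}$ converges (in the appropriate $L_2(g^\star)$ sense) to the unsmoothed tangent space so that $\beffscore_{\bpi^\star,\bpsi^\star}^{(h)}\to\beffscore_{\bpi^\star,\bpsi^\star}$ is well defined, and that the projection in \eqref{eq:effscore} is onto a \emph{closed} subspace so that the residual is genuinely nonzero whenever $\bu\neq\boldsymbol{0}$. The convolution operators $\kernel_h\star$ in \eqref{eq:zeta} are injective with dense range, and under Assumptions~\ref{ass:controlvariancekernel} the bounds $b_1(h)\le\kernel_h\le b_2(h)$ and the $L_2$-control on $\ln\psi$ in $\Psi(\mathcal{X}_j)$ keep all scores uniformly square-integrable, so the limit exists; the delicate point is translating the algebraic identifiability of \cite{allmanAOS09} into the functional-analytic statement that the naive-score directions are not contained in the nuisance tangent space. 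I expect this translation, rather than the positive-semidefiniteness, to be the crux of the proof.
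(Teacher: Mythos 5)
Your reduction of invertibility to linear independence of the coordinates of $\beffscore_{\bpi^\star,\bpsi^\star}$ in $L_2(g^\star)$ is fine, but the crux of your argument contains a genuine gap, and it is exactly the step you flag at the end. You claim that if $\bu^\top\beffscore_{\bpi^\star,\bpsi^\star}=0$ a.s.\ then there exist tangent directions $a_{k,j}$ realizing an \emph{exact} identity between the naive score combination and a nuisance score, and that identifiability (Assumption~\ref{ass:controlvariancetrue} via \cite{allmanAOS09}) then forces $\bu=\boldsymbol{0}$. Two things break here. First, the efficient score is the residual of a projection onto the \emph{closure} of the tangent cone, so vanishing of $\bu^\top\beffscore_{\bpi^\star,\bpsi^\star}$ only means the naive score combination is a limit of nuisance scores, not equal to one; clearing denominators and invoking linear independence of products of the $\psi_{k,j}^\star$ does not apply to such a limit. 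Second, and more fundamentally, global identifiability of $(\bpi^\star,\bpsi^\star)$ is a statement about injectivity of the map $(\bpi,\bpsi)\mapsto g_{\bpi,\bpsi}$, while degeneracy of the efficient information is a \emph{first-order} statement about its derivative; an injective parametrization can perfectly well have a degenerate derivative (think of $t\mapsto t^3$ at $t=0$), so "identifiability rules out any such nontrivial relation" is not a valid implication and cannot be repaired by the algebraic manipulations you sketch. Proving that this particular model has no first-order degeneracy is the entire content of the lemma, and your proposal asserts it rather than proves it.

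The paper's proof avoids this trap by a completely different mechanism: it discretizes the model into a finite latent class model \eqref{model:disc}, invokes Proposition~1 of \cite{gassiat2018efficient} — whose non-singularity statement is itself derived not from identifiability but from the \emph{existence of consistent spectral estimators} of \cite{anandkumar2014tensor} combined with differentiability in quadratic mean and the van Trees inequality (i.e., "a regular $\sqrt{n}$-rate estimator exists, hence the information cannot be singular") — and then uses Lemma~1 of \cite{gassiat2018efficient} to show that the discretized efficient informations $\tilde J_M$ converge to $\befffishers$, which is therefore non-singular. In other words, the logical direction is reversed relative to your plan: non-degeneracy is deduced from estimability, not from identifiability. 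If you wanted to keep your functional-analytic framing, you would still need an ingredient of this strength (or a direct proof of first-order identifiability of the conditional-independence mixture, e.g.\ via Kruskal-type tensor arguments) to close the gap.
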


\subsection{Rate of convergence of the finite-dimensional estimates}
We can now establish that the estimator of the proportions, $\hpi$, converges in probability to $\bpi^\star$ at a rate $n^{-r}$, where $r > 1/4$ depends on the bandwidth, as specified in Assumption~\ref{ass:band1}.
Indeed, Assumption~\ref{ass:band1} ensures that $$n^{-1/2}h^{-1/2}+h^2=o(n^{-r}).$$
\begin{assumptions} \label{ass:band1}
The bandwidth $h$ satisfies that $h n^{r/2} \to 0$ and $h n^{1 - 2r} \to \infty$ as $n \to \infty$ for some $r$ such that $r>1/4$.
\end{assumptions}

\begin{remark}
Since $\hat\bpi^{h,n}$ is a consistent estimator of $\bpi^\star$, it belongs to $\mathcal{B}(\bpi^\star)$ with high probability.
As a direct consequence of Theorem~\ref{thm:rateL1}, and under Assumptions~\ref{ass:controlvariancetrue}, \ref{ass:controlvariancekernel}, and \ref{ass:band1}, we obtain
$$
 \sum_{k=1}^K \sum_{j=1}^J \| \psi^\star_{k,j} - \widehat{\psi}_{k,j}^{(h,n)}\|^2_1 = O_\mathbb{P}( \|\hpi - \bpi^\star\|_1)+o_\mathbb{P}( n^{-r}).
$$
\end{remark}

To establish the asymptotic distribution of the maximum likelihood estimator, the standard proof relies on the quadratic expansion of the likelihood. However, here we have to work with the smoothed profile log-likelihood. \cite[Theorem 1]{murphy2000profile} gives sufficient conditions to state that semi-parametric profile likelihoods, where the nuisance parameter has been profiled out, behave like ordinary likelihoods in that they have a quadratic expansion. This result cannot be used directly in our context since we consider smoothed version of the likelihoods. Therefore, we start by giving a proposition that extends the results of \cite[Theorem 1]{murphy2000profile} to smoothed likelihoods. In addition, in our situation the "no-bias" condition introduced by  \cite{murphy2000profile} is no longer satisfied because the rate of convergence of the infinite-dimensional estimator established by Theorem~\ref{thm:rateL1} is too slow. Hence, we need to adapt  \cite[Theorem 1]{murphy2000profile} to the situation where the "no-bias" condition is not satisfied but that a "small-bias" condition is satisfied, with careful attention paid to the effects introduced by the smoothing. Note that the "small-bias" condition would no lead to the efficiency and implies that the estimation of the infinite-dimensional parameter slows down the rate of convergence of the finite-dimensional parameters.

 \begin{proposition}\label{thm:convparam}
 Let $\bt$ having the same dimension that $\bpi$. For each parameter $(\bpi,\bpsi)$, there exists a map, which we denote by $\bt\mapsto \bpsi_{\bt}(\bpi,\bpsi)$, from a fixed neighborhood of $\bpi$ into the parameter set for $\bpsi$ such that the map $\bt \mapsto \bmap^{(h)}(\bt,\bpi,\bpsi)(\bx)$ is defined by 
\begin{equation}\label{eq:defmap}
\bmap^{(h)}(\bt,\bpi,\bpsi) = \ln f^{(h)}_{\bt,\bpsi_{\bt}(\bpi,\bpsi)}.
\end{equation}
 Hence,  $\bmap^{(h)}(\bt,\bpi,\bpsi)(\bx) $ corresponds to the smoothed version of the log-likelihood of the mixture model with parameters $(\bt,\bpsi_{\bt}(\bpi,\bpsi))$ evaluated at $\bx$. Suppose that the following conditions are satisfied for some real $r$ with $1/4<r\leq 1/2$ and for  a neighborhood $V$ of $(\bpi^\star,\bpi^\star,\bpsi)$ 
\begin{enumerate}[label=C-\arabic*]
\item \label{C0}
Suppose that $\bt \mapsto \bpsi_{\bt}(\bpi,\bpsi)$ where $\bpsi_{\bt}(\bpi,\bpsi)$ is a matrix of functions with $K$ rows and $J$ column such that its element of row $\ell$ and column $j$ is the real function defined on $\mathcal{X}_j$ and denoted by $\psi_{\bt,\ell,j}(\bpi,\bpsi)$. Suppose that for any $(\ell,j)$,  all its first and second order partial derivatives of $\psi_{\bt,\ell,j}(\bpi,\bpsi)$ are continuous functions in the neighborhood of $V$, and that there exist square integrable functions of $\bx$ that upper-bound $\sup_{(\bt,\bpi,\bpsi)\in V} |\bpsi_{\bt}(\bpi,\bpsi)|$ and $\sup_{(\bt,\bpi,\bpsi)\in V} \left| \frac{\frac{\partial}{\partial t_k}   \psi_{\bt,\ell,j}(\bpi,\bpsi)}{ \psi_{\bt,\ell,j}(\bpi,\bpsi)}\right|$  and an integrable function of $\bx$ that upper-bounds $\sup_{(\bt,\bpi,\bpsi)\in V} \left| \frac{\frac{\partial^2}{\partial t_{k'} \partial t_k}   \psi_{\bt,\ell,j}(\bpi,\bpsi)}{ \psi^2_{\bt,\ell,j}(\bpi,\bpsi)}\right|$. 
\item \label{C1} The map $\bt \mapsto \bmap^{(h)} (\bt,\bpi,\bpsi)(\bx)$ is twice continuously differentiable with respect to $\bt$ for all $\bx$ and all $h$ and its first two derivatives are denoted by $\dot{\bmap}^{(h)}(\bt,\bpi,\bpsi)$ and $\ddot{\bmap}^{(h)}(\bt,\bpi,\bpsi)$. Furthermore, 
\begin{enumerate}
    \item the class of functions $\mathcal{D}_{n,r}=\{n^{r-1/2}\dot\bmap^{(h)}(\bt,\bpi,\bpsi): (\bt,\bpi,\bpsi)\in V\}$ 
    is $g^\star$-Donsker with square-integrable envelope function, meaning that for any $k$, $\Gn n^{r-1/2} \dot\bmap^{(h)}_k(\bt,\bpi,\bpsi)$
    converges in distribution to a centered Gaussian process, where we have  $
    \Gn s=\frac{1}{\sqrt{n}} \sum_{i=1}^n  \left( s(\bX_i) - \Eg[s(\bX_i)]\right) $ and that there exists $\dot\nu_k\in L_2(g^\star)$ such that for any $(\bt,\bpi,\bpsi)\in V$, we have 
    $     |n^{r-1/2}\dot\bmap_k^{(h)}(\bt,\bpi,\bpsi)|\leq \dot\nu_k$.
    \item the class of functions  $\{\ddot\bmap(\bt,\bpi,\bpsi): (t,\bpi,\bpsi)\in V\}$ is $g^\star$-Glivenko-Cantelli and is bounded in $L_1(g^\star)$ meaning that 
       $$ \sup_{(\bt,\bpi,\bpsi)\in V} \left|\Pn \ddot\bmap^{(h)}_{\ell,k}(\bt,\bpi,\bpsi) - \Eg[\ddot\bmap_{k,\ell}^{(h)}(\bt,\bpi,\bpsi)(\bX_1)]\right|=o_\mathbb{P}(1), $$
       where $\Pn s= n^{-1} \sum_{i=1}^n s(\bX_i)$,        and there exists $\ddot\nu_{k,\ell}\in L_1(g^\star)$ such that for any $(\bt,\bpi,\bpsi)\in V$, we have   $|\dot\bmap_{k,\ell}^{(h)}(\bt,\bpi,\bpsi)|\leq \ddot\nu_{k,\ell}$.
\end{enumerate} 
     \item \label{C2} The submodel with parameters $(\bt,\bpsi_t(\bpi,\bpsi))$ should pass through $(\bpi,\bpsi)$ at $\bt=\bpi$:
    \begin{equation*}  
        \bpsi_{\bpi}(\bpi,\bpsi)=\bpsi,\, \forall (\bpi,\bpsi).
    \end{equation*}
    \item \label{C3} The score function with smoothing for the parameter $\bt$  of the model with likelihood $\bmap(\bt,\bpi^\star,\bpsi^\star)$ evaluated at $\bt=\bpi^\star$ tends to the efficient score function for $\bpi$ as $h$ tends to zero leading that
    \begin{equation*}  
       \lim_{h\to 0} \dot\bmap^{(h)}(\bpi^\star,\bpi^\star,\bpsi^\star) = \beffscores,
    \end{equation*}
%    where $\beffscore_{\bpi,\bpsi}=\lim_{h\to 0} \beffscore_{\bpi,\bpsi}^{(h)}$ is the efficient score function without smoothing and $\beffscore_{\bpi,\bpsi}^{(h)}$ denotes the efficient score function with smoothing defined by \eqref{eq:effscore}.
    \item \label{C4} For any random sequences $\tilde\bpi^{(n)}$ that converges in probability to $\bpi^\star$, we have
    \begin{equation*}
        \widehat{\bpsi}^{(h,n,\tilde\bpi^{(n)})} \xrightarrow{p} \bpsi^\star,
    \end{equation*}
    for some metric and an extension of "small-bias condition" is satisfied  meaning that for any $k$
    \begin{equation*}
        \Eg   [     \dot\bmap^{(h)}_k(\bpi^\star,\tilde\bpi^{(n)}, \widehat{\bpsi}^{(h,n,\tilde\bpi^{(n)})})(\bX_1)] = o_\mathbb{P}(\|\tilde\bpi^{(n)} - \bpi^\star\| +   n^{-r}  ).
    \end{equation*}
\end{enumerate}
Then, for any random sequence $\tilde\bpi^{(n)}$ that converges in probability to $ \bpi^\star$, 
\begin{multline*}
\profilenh(\bpi^\star)=\profilenh(\tilde\bpi^{(n)}) + (\tilde\bpi^{(n)} -  \bpi^\star)^\top \Pn \beffscores  - \frac{1}{2}(\tilde\bpi^{(n)} -  \bpi^\star)^\top \befffishers(\tilde\bpi^{(n)} -  \bpi^\star) \\+ o_\mathbb{P}([ \|\tilde\bpi^{(n)} -  \bpi^\star\| +  n^{-r}  ]^2).
\end{multline*}
 \end{proposition}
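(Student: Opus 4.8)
\emph{Approach.} The plan is to mimic the profile–likelihood expansion of \cite{murphy2000profile}, but working throughout with the loss $\lossnh$ (equivalently, minus the smoothed log-likelihood) and exploiting that $\profilenh(\bpi)=\inf_{\bpsi}\lossnh(\bpi,\bpsi)$. Writing $G_n=\Pn\ln g^\star$, condition~\ref{C2} gives, for every $(\bpi,\bpsi)$, that $\lossnh(\bt,\bpsi_{\bt}(\bpi,\bpsi))=G_n-\Pn\bmap^{(h)}(\bt,\bpi,\bpsi)$ with $\bpsi_{\bpi}(\bpi,\bpsi)=\bpsi$. Evaluating the submodel through the profile minimizer $\widehat{\bpsi}^{(h,n,\bpi)}$ at $\bt=\bpi$ therefore yields the exact identity $\profilenh(\bpi)=G_n-\Pn\bmap^{(h)}(\bpi,\bpi,\widehat{\bpsi}^{(h,n,\bpi)})$, whereas for $\bt\ne\bpi$ the infimum definition of the profile only gives $\profilenh(\bt)\le G_n-\Pn\bmap^{(h)}(\bt,\bpi,\widehat{\bpsi}^{(h,n,\bpi)})$. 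Applying this once with the submodel anchored at $\tilde\bpi^{(n)}$ and once anchored at $\bpi^\star$ produces the two-sided bound
\[
\widetilde L_n\;\le\;\profilenh(\bpi^\star)-\profilenh(\tilde\bpi^{(n)})\;\le\;\widetilde U_n,
\]
where $\widetilde U_n=\Pn[\bmap^{(h)}(\tilde\bpi^{(n)},\tilde\bpi^{(n)},\widehat{\bpsi}^{(h,n,\tilde\bpi^{(n)})})-\bmap^{(h)}(\bpi^\star,\tilde\bpi^{(n)},\widehat{\bpsi}^{(h,n,\tilde\bpi^{(n)})})]$ and $\widetilde L_n$ is the analogous quantity built from $\widehat{\bpsi}^{(h,n,\bpi^\star)}$. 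It then suffices to show that $\widetilde U_n$ and $\widetilde L_n$ admit the same quadratic expansion and to squeeze.

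\emph{Expanding the linear term.} Next I would Taylor-expand the first argument of $\bmap^{(h)}$ around $\bt=\bpi^\star$, which is licensed by~\ref{C0}--\ref{C1}; for $\widetilde U_n$ this gives
\[
\widetilde U_n=(\tilde\bpi^{(n)}-\bpi^\star)^\top\Pn\dot{\bmap}^{(h)}(\bpi^\star,\tilde\bpi^{(n)},\widehat{\bpsi}^{(h,n,\tilde\bpi^{(n)})})+\tfrac12(\tilde\bpi^{(n)}-\bpi^\star)^\top\Pn\ddot{\bmap}^{(h)}(\bar\bt,\cdot,\cdot)(\tilde\bpi^{(n)}-\bpi^\star),
\]
with $\bar\bt$ between $\tilde\bpi^{(n)}$ and $\bpi^\star$. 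I split the linear term as $\Pn=\Eg+n^{-1/2}\Gn$. The mean part $\Eg[\dot{\bmap}^{(h)}]$ is exactly what the small–bias condition~\ref{C4} controls: it is $o_\mathbb{P}(\|\tilde\bpi^{(n)}-\bpi^\star\|+n^{-r})$, so after multiplying by $(\tilde\bpi^{(n)}-\bpi^\star)$ and using $\|\tilde\bpi^{(n)}-\bpi^\star\|\,n^{-r}\le[\|\tilde\bpi^{(n)}-\bpi^\star\|+n^{-r}]^2$ it is absorbed into the claimed remainder. This is precisely the step where replacing the classical no–bias by the small–bias condition is just enough, and where the slower $n^{-r}$ nuisance rate surfaces.

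\emph{Stochastic and quadratic parts.} For the stochastic part I would write $n^{-1/2}\Gn\dot{\bmap}^{(h)}=n^{-r}\Gn(n^{r-1/2}\dot{\bmap}^{(h)})$ and use the Donsker property~\ref{C1}(a): since $\tilde\bpi^{(n)}\to\bpi^\star$ and, by~\ref{C4}, $\widehat{\bpsi}^{(h,n,\tilde\bpi^{(n)})}\to\bpsi^\star$, the $L_2(g^\star)$ distance between the integrands at $(\bpi^\star,\tilde\bpi^{(n)},\widehat{\bpsi}^{(h,n,\tilde\bpi^{(n)})})$ and at $(\bpi^\star,\bpi^\star,\bpsi^\star)$ tends to zero, so asymptotic equicontinuity gives $\Gn(n^{r-1/2}[\dot{\bmap}^{(h)}(\cdot)-\dot{\bmap}^{(h)}(\bpi^\star,\bpi^\star,\bpsi^\star)])=o_\mathbb{P}(1)$; multiplied by $n^{-r}(\tilde\bpi^{(n)}-\bpi^\star)$ this is again $o_\mathbb{P}([\|\tilde\bpi^{(n)}-\bpi^\star\|+n^{-r}]^2)$. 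Condition~\ref{C3} then identifies $\dot{\bmap}^{(h)}(\bpi^\star,\bpi^\star,\bpsi^\star)$ with $\beffscores$ up to a term whose variance vanishes with $h$, leaving the linear contribution $(\tilde\bpi^{(n)}-\bpi^\star)^\top\Pn\beffscores$ (using $\Eg[\beffscores]=\boldsymbol{0}_K$). For the quadratic term, the Glivenko–Cantelli property~\ref{C1}(b) replaces $\Pn\ddot{\bmap}^{(h)}(\bar\bt,\cdot,\cdot)$ by $\Eg[\ddot{\bmap}^{(h)}]$ uniformly over $V$; the information identity for the smoothed least–favorable submodel, together with~\ref{C3} and the convergence of the smoothed efficient information to $\befffishers$ as $h\to0$ (with Lemma~\ref{lem:invert} ensuring non-degeneracy), identifies the limit as $-\befffishers$, producing $-\tfrac12(\tilde\bpi^{(n)}-\bpi^\star)^\top\befffishers(\tilde\bpi^{(n)}-\bpi^\star)$. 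Running the identical argument for $\widetilde L_n$ (now with $\widehat{\bpsi}^{(h,n,\bpi^\star)}$, for which~\ref{C4} applied at $\tilde\bpi^{(n)}=\bpi^\star$ yields the $o_\mathbb{P}(n^{-r})$ bias) gives the same expansion, and the sandwich concludes.

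\emph{Main obstacle.} The delicate point is the empirical–process step. Because $h=h_n\to0$, condition~\ref{C1}(a) really concerns a triangular array of changing function classes rescaled by the $n$–dependent factor $n^{r-1/2}$, so equicontinuity must be established uniformly along this array rather than for a single fixed Donsker class; and since the nuisance estimator converges only at rate $n^{-r}$ with possibly $r<1/2$, the $L_2(g^\star)$–modulus of continuity of the smoothed score at $(\bpi^\star,\bpsi^\star)$ must be quantified carefully so that the plug-in error is genuinely $o_\mathbb{P}(n^{-r})$ and not merely $O_\mathbb{P}(n^{-r})$. A secondary difficulty is justifying the identity $\Eg[\ddot{\bmap}^{(h)}(\bpi^\star,\bpi^\star,\bpsi^\star)]\to-\befffishers$ in the smoothed setting, which requires differentiating under the integral twice and interchanging with the limit $h\to0$, both underwritten by the integrable envelopes posited in~\ref{C0}--\ref{C1}.
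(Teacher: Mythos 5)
Your proposal is correct and follows essentially the same route as the paper's proof: the same two-sided sandwich of $\profilenh(\bpi^\star)-\profilenh(\tilde\bpi^{(n)})$ obtained from the profile-minimizing property and Condition \ref{C2}, the same second-order Taylor expansion in $\bt$ around $\bpi^\star$, the same split $\Pn=\Eg+n^{-1/2}\Gn$ with the small-bias condition \ref{C4} absorbing the mean term, the Donsker/Glivenko--Cantelli properties with dominated convergence identifying the limits as $\Pn\beffscores$ and $-\befffishers$, and the final squeeze applied to both bounds. The obstacle you flag (the $n$-dependent, $h$-dependent rescaled function class and the need for a genuinely $o_\mathbb{P}(n^{-r})$ plug-in error) is exactly the point the paper handles via its equations \eqref{eq:controldot} and \eqref{eq:controlddot}.
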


We are now able to state the stochastic order of the estimator the finite-dimensional parameters. 
\begin{theorem}\label{thm:norma}
    Under Assumptions~\ref{ass:controlvariancetrue}, \ref{ass:controlvariancekernel} and \ref{ass:band1}, the estimator of the proportions $\hpi$ converges at the rate $n^{-r}$ such that
    $$
    \|\hpi - \bpi^\star\|_1=O_\mathbb{P}(n^{-r}).
    $$
\end{theorem}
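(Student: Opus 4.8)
The plan is to apply the quadratic expansion of the profile smoothed likelihood established in Proposition~\ref{thm:convparam} to the estimator $\hpi$ itself, and then to exploit the fact that $\hpi$ minimizes $\profilenh$ to convert the expansion into a quadratic inequality for $\delta_n := \|\hpi - \bpi^\star\|$. Since Theorem~\ref{thm:consistence} guarantees $\hpi \xrightarrow{p} \bpi^\star$, the random sequence $\tilde\bpi^{(n)} = \hpi$ is admissible in Proposition~\ref{thm:convparam}, once its hypotheses \ref{C0}--\ref{C4} have been verified (see the discussion of obstacles below). Taking $\tilde\bpi^{(n)} = \hpi$ then yields
\begin{multline*}
\profilenh(\bpi^\star) = \profilenh(\hpi) + (\hpi - \bpi^\star)^\top \Pn \beffscores - \tfrac{1}{2}(\hpi - \bpi^\star)^\top \befffishers (\hpi - \bpi^\star) + o_\mathbb{P}([\delta_n + n^{-r}]^2).
\end{multline*}
Because $\hpi=\argmin_{\bpi}\profilenh(\bpi)$, we have $\profilenh(\bpi^\star) - \profilenh(\hpi) \geq 0$, so rearranging gives the basic inequality $\tfrac{1}{2}(\hpi - \bpi^\star)^\top \befffishers (\hpi - \bpi^\star) \leq (\hpi - \bpi^\star)^\top \Pn \beffscores + o_\mathbb{P}([\delta_n + n^{-r}]^2)$.

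Next I would control the two terms on the right. For the linear term, $\Eg[\beffscores(\bX)] = \boldsymbol{0}_K$, so the central limit theorem yields $\Pn \beffscores = O_\mathbb{P}(n^{-1/2})$; since $r \le 1/2$ this is $O_\mathbb{P}(n^{-r})$, and Cauchy--Schwarz bounds the linear term by $\delta_n \cdot O_\mathbb{P}(n^{-r})$. For the quadratic term, Lemma~\ref{lem:invert} ensures that $\befffishers$ is positive definite, so there is a constant $c>0$ with $(\hpi - \bpi^\star)^\top \befffishers (\hpi - \bpi^\star) \geq c\,\delta_n^2$. Substituting these bounds and using $[\delta_n + n^{-r}]^2 \leq 2\delta_n^2 + 2n^{-2r}$, the basic inequality becomes $\tfrac{c}{2}\delta_n^2 \leq \delta_n\,O_\mathbb{P}(n^{-r}) + o_\mathbb{P}(1)\,(\delta_n^2 + n^{-2r})$.

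To extract the rate I would absorb the $o_\mathbb{P}(1)\,\delta_n^2$ term into the left-hand side (valid on an event of probability tending to one, since its coefficient then exceeds $c/4$) and apply Young's inequality $\delta_n\,O_\mathbb{P}(n^{-r}) \leq \tfrac{c}{8}\delta_n^2 + O_\mathbb{P}(n^{-2r})$, which gives $\tfrac{c}{8}\delta_n^2 \leq O_\mathbb{P}(n^{-2r}) + o_\mathbb{P}(n^{-2r})$. Hence $\delta_n^2 = O_\mathbb{P}(n^{-2r})$, i.e. $\delta_n = O_\mathbb{P}(n^{-r})$, and equivalence of norms on $\mathbb{R}^{K-1}$ delivers the stated $\|\hpi - \bpi^\star\|_1 = O_\mathbb{P}(n^{-r})$.

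The main obstacle is not this final algebraic step, which is routine, but the verification of the hypotheses \ref{C0}--\ref{C4} of Proposition~\ref{thm:convparam} for the present model. Condition \ref{C1} (the Donsker and Glivenko--Cantelli properties of the smoothed score classes, whose envelopes depend on $h$) and especially the extended ``small-bias'' condition \ref{C4} are delicate: the latter requires showing that $\Eg[\dot\bmap^{(h)}_k(\bpi^\star, \tilde\bpi^{(n)}, \widehat{\bpsi}^{(h,n,\tilde\bpi^{(n)})})(\bX_1)]$ is $o_\mathbb{P}(\|\tilde\bpi^{(n)} - \bpi^\star\| + n^{-r})$, which I would establish by combining the $L_1$-rate for the infinite-dimensional estimator from Theorem~\ref{thm:rateL1} with the near-orthogonality of the naive score to the tangent cone built into the defining property \eqref{eq:condproj} of the efficient score, while tracking how the bandwidth $h$ enters through Assumption~\ref{ass:band1}.
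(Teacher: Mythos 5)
Your final rate-extraction step is correct, and it takes a genuinely different (and cleaner) route than the paper's. You invoke the expansion of Proposition~\ref{thm:convparam} once, at $\tilde\bpi^{(n)}=\hpi$, and combine the minimizing property of $\hpi$ with the positive definiteness of $\befffishers$ (Lemma~\ref{lem:invert}), Cauchy--Schwarz, and Young's inequality: a standard basic-inequality argument. The paper instead evaluates the expansion at two points, $\hpi$ and $\bpi^\star+n^{-r}\befffishersinv\bUp$ where $\bUp$ is a rescaling of $\Pn\beffscores$, takes the difference, completes the square, and rules out divergence of $n^{r}\|\hpi-\bpi^\star\|_2$ by a contradiction argument. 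Both are valid; your version avoids the second evaluation point and the contradiction step.

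The genuine gap is the one you flag but do not fill: verifying conditions \ref{C0}--\ref{C4}, which is the bulk of the paper's proof of this theorem. Two pieces cannot be waved through. First, Proposition~\ref{thm:convparam} presupposes a concrete submodel map $\bt\mapsto\bpsi_{\bt}(\bpi,\bpsi)$; until one is exhibited, conditions \ref{C1}--\ref{C4} are not even well-posed statements. The paper constructs the least-favorable exponential tilt \eqref{eq:mapingpsi}, $\psi_{\bt,k,j}(\bpi,\bpsi)\propto \psi_{k,j}\exp\bigl((t_K-\pi_K)\,(\check\psi^{(h)}_{k,j}-\psi_{k,j})/\psi_{k,j}\bigr)$, which passes through $\bpsi$ at $\bt=\bpi$ (condition \ref{C2}) and whose score at $(\bpi^\star,\bpi^\star,\bpsi^\star)$ equals $\beffscore_{\bpi^\star,\bpsi^\star}^{(h)}$ (condition \ref{C3}); even defining this path requires the uniform bound of Lemma~\ref{lem:control_sup_normdev} on $(\check\psi^{(h)}_{k,j}-\psi^\star_{k,j})/\psi^\star_{k,j}$. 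Second, your plan for the small-bias condition \ref{C4} --- orthogonality via \eqref{eq:condproj} plus the rate of Theorem~\ref{thm:rateL1} --- omits the decisive quantitative step. Equation \eqref{eq:condproj} kills only the term that is \emph{linear} in the nuisance perturbation; condition \ref{C4} then hinges on showing that the remaining bias is \emph{quadratic} in the nuisance error, namely $O\bigl(\sum_{k,j}\|\psi_{k,j}-\psi^\star_{k,j}\|_{L_1}^2\bigr)+O(h^2)$, which is the content of the paper's Lemma~\ref{lem:Delta} (second-order Taylor expansions of $\bpsi\mapsto f^{(h)}_{\bpi^\star,\bpsi}$ with integral-form remainders and sub-Gaussian kernel bounds). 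Only because the bias is quadratic in the $L_1$ distance does the slow rate $O_\mathbb{P}(n^{-1/2}h^{-1/2}+h^2+\|\tilde\bpi^{(n)}-\bpi^\star\|_1)$ of Theorem~\ref{thm:rateL1}, combined with Assumption~\ref{ass:band1}, produce a bias that is negligible at the required level. Likewise, the Donsker and Glivenko--Cantelli claims of \ref{C1} are not generic: they rest on Sobolev-entropy bounds for the smoothed score classes whose radii blow up like $h^{-1}$ (as in the proof of Lemma~\ref{lem:deviation}), and they hold only because Assumption~\ref{ass:band1} gives $h^{-1/2}=o(n^{1/2-r})$.
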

To prove this theorem, we begin by verifying that the assumptions of Proposition~\ref{thm:convparam} are satisfied, allowing us to derive a quadratic expansion of the smoothed profile log-likelihood. To this end, we rely on the regularity of the parameter spaces, an appropriate choice of the bandwidth, the consistency of the parameter estimators as established in Theorem~\ref{thm:convparam}, and the control over the accuracy of the infinite-dimensional estimators, which depends on the bandwidth, the sample size, and the accuracy of the finite-dimensional estimators, as stated in Theorem~\ref{thm:rateL1}. 
As a direct consequence of Theorems~\ref{thm:norma} and \ref{thm:norma}, under Assumptions~\ref{ass:controlvariancetrue}, \ref{ass:controlvariancekernel} and \ref{ass:band1}, if $h=Cn^{-1/5}$, for some constant $C$, we have for any $\varepsilon>0$
$$
 \|\hpi - \bpi^\star\|_1=O_\mathbb{P}(n^{-2/5 - \varepsilon}).
$$
and
$$
\sum_{k=1}^K \sum_{j=1}^J \| \psi^\star_{k,j} - \widehat{\psi}_{k,j}^{(h,n)}\|^2_1=O_\mathbb{P}(n^{-2/5 - \varepsilon}).
$$

\subsection{Extension to the variables defined on the real line}
$$
\widetilde \Psi(\mathbb{R}) = \{ \psi_{k,j}\in L_2(\mathbb{R}),\, 0<\psi\leq C_1,   \|\ln \psi\|_{L_2(g^\star)}\leq C_2, \|(\ln \psi)^{''}\|_{L^{\infty}}\leq C_3\}.
$$ 
We denote by $\widetilde \Psi(\mathbb{R}^J)$ the space obtain as a product of $J$ spaces $\widetilde \Psi(\mathbb{R})$. We consider the set of parameters
$$
\widetilde\Theta_K=\mathcal{S}_K^r \times \widetilde \Psi(\mathbb{R}).
$$
To ensure that the asymptotic Fisher information matrix is still invertible in the case of densities define on real line, some additional assumptions needs to be done. These assumptions are stated by Assumptions~\ref{ass:poly}. For example, this assumption is satisfied for marginal densities with tails decaying at the same polynomial order in the same dimension. This result cannot be extended, however, to many other marginal densities.   
\begin{assumptions}\label{ass:poly}
    For any $(k,k')$ and and $j$, $\psi_{k,j}^\star/\psi_{k',j}$ is bounded away from zero and infinity.
\end{assumptions}
\begin{theorem}\label{thm:normabis}
If $\mathcal{X}_j=\mathbb{R}$ and considering the parameter space $\widetilde{\Theta}_K$, under Assumptions~\ref{ass:controlvariancetrue}, \ref{ass:controlvariancekernel}, \ref{ass:band1} and \ref{ass:poly}, we have
    $$
    \|\hpi - \bpi^\star\|_1=O_\mathbb{P}(n^{-r})
    $$
    and
    $$
    \sum_{k=1}^K \sum_{j=1}^J \| \psi^\star_{k,j} - \widehat{\psi}^{(h,n,\bpi)}_{k,j}\|^2_1 = O_\mathbb{P}( n^{-r}).
    $$
\end{theorem}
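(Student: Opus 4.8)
The plan is to reproduce, in the non-compact setting $\mathcal{X}_j=\mathbb{R}$ with parameter space $\widetilde\Theta_K$, the entire chain of arguments that yielded Theorems~\ref{thm:rateL1} and~\ref{thm:norma}: the bias and deviation estimates, consistency, the algorithmic inequalities, the invertibility of the efficient Fisher information, the quadratic expansion of the profile smoothed likelihood, and the final stochastic-order argument. At each stage the task is to check that the single use of compactness of $\mathcal{X}_j$ can be dispensed with. The essential structural change is that the $L_2$ bound on $\ln\psi_{k,j}$ is now taken with respect to $g^\star$ rather than Lebesgue measure; this is precisely what keeps finite the integrals that were previously controlled by compactness, since on $\mathbb{R}$ quantities such as $\ln\psi$ are square-integrable only after weighting by a density with tails.

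First I would revisit Lemmas~\ref{lem:taylor} and~\ref{lem:deviation}. The $O(h^2)$ smoothing bias rests on the Taylor expansion $\kernel_h\star\ln\psi=\ln\psi+O(h^2)$, whose error is uniformly controlled by $\|(\ln\psi)''\|_{L^\infty}\le C_3$ together with $0<\psi\le C_1$; both hold on $\mathbb{R}$, so the sup-norm estimate for $g_{\bpi,\bpsi}-f^{(h)}_{\bpi,\bpsi}$ is unchanged. For the loss version the difference is an integral against $g^\star$ of a $\ln$-ratio, and it is exactly the $g^\star$-weighted constraint $\|\ln\psi\|_{L_2(g^\star)}\le C_2$ in $\widetilde\Psi(\mathbb{R})$ that keeps this integral finite and uniformly $O(h^2)$ on the real line. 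For Lemma~\ref{lem:deviation}, the empirical-process bound needs a Donsker/bracketing control of the relevant class; the sub-Gaussian kernel assumption in Assumption~\ref{ass:controlvariancekernel}, the bound $0<\psi\le C_1$, and the $g^\star$-weighting of $\ln\psi$ together furnish a square-integrable envelope and finite bracketing entropy, so the $O_\mathbb{P}(n^{-1/2}h^{-1/2})$ rate persists and consistency (Theorem~\ref{thm:consistence}) follows as before.

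Next I would carry Lemmas~\ref{lem:prel} and~\ref{lem:inequality} and Theorem~\ref{thm:rateL1} over essentially verbatim. These rest on the multiplicative structure of the smoothing operator, the convexity of $\lossnh$ in $\bpsi$, and Jensen-type inequalities applied to the MM map, none of which invoke compactness; one need only check that the $L_1$ distances appearing remain finite, which is guaranteed by $0<\psi\le C_1$ and integrability of the densities on $\mathbb{R}$. This delivers, for fixed $\bpi\in\mathcal{B}(\bpi^\star)$, the real-line rate $\sum_{k}\sum_{j}\|\psi^\star_{k,j}-\widehat\psi^{(h,n,\bpi)}_{k,j}\|_1^2=O_\mathbb{P}(n^{-1/2}h^{-1/2}+h^2+\|\bpi-\bpi^\star\|_1)$.

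The hard part will be the analogue of Lemma~\ref{lem:invert}: invertibility of the asymptotic efficient Fisher information $\befffishers$. On a compact domain the densities are bounded away from zero, so the naive score components \eqref{eq:naive} and the weights $\omega^{(h)}_{\bpi,\bpsi,k}$ are non-degenerate and the projection defining the efficient score in \eqref{eq:effscore}--\eqref{eq:condproj} is well behaved; on $\mathbb{R}$ the tails allow the ratios $\psi^\star_{k,j}/\psi_{k',j}$ to vanish or explode, which could make a linear combination of naive scores asymptotically absorbable into the tangent cone $\mathcal{T}^{(h)}$ and thereby collapse $\befffishers$. This is exactly where Assumption~\ref{ass:poly} intervenes: bounding every ratio $\psi^\star_{k,j}/\psi_{k',j}$ away from zero and infinity keeps the weights and the smoothed ratios in \eqref{eq:zeta} uniformly controlled, so the compact-case argument—combining the linear-independence identifiability condition of Assumption~\ref{ass:controlvariancetrue} with non-degeneracy of the scores to rule out any null direction of $\befffishers$—carries through on the real line. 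Once invertibility is secured, Proposition~\ref{thm:convparam} applies (its regularity, Donsker, Glivenko--Cantelli, and small-bias conditions are verified from the regularity of $\widetilde\Psi(\mathbb{R})$, Assumption~\ref{ass:band1}, and the real-line Theorem~\ref{thm:rateL1}, just as in the compact case), and the concluding argument of Theorem~\ref{thm:norma} yields $\|\hpi-\bpi^\star\|_1=O_\mathbb{P}(n^{-r})$; feeding this back into the real-line Theorem~\ref{thm:rateL1} gives the stated $O_\mathbb{P}(n^{-r})$ bound for the component densities.
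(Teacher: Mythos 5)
Your overall strategy is the same as the paper's: carry the whole chain (bias and deviation bounds, consistency, the MM inequalities, Theorem~\ref{thm:rateL1}, invertibility of $\befffishers$, Proposition~\ref{thm:convparam}, and the concluding argument of Theorem~\ref{thm:norma}) over to $\mathbb{R}$, replacing Lebesgue-weighted controls by $g^\star$-weighted ones and invoking Assumption~\ref{ass:poly} for the Fisher information. Two steps, however, are asserted rather than proved, and in both cases the assertion as stated would fail.

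First, you write that consistency ``follows as before.'' It does not: the proof of Theorem~\ref{thm:consistence} extracts a convergent subsequence from $(\hpi,\hpsi)$ using the Arzel\`a--Ascoli theorem, which gives sequential compactness of $\Psi(\mathcal{X}_j)$ in the sup-norm precisely because $\mathcal{X}_j$ is compact. On $\mathbb{R}$ that argument is unavailable. The paper replaces it by the Fr\'echet--Kolmogorov theorem: equicontinuity and uniform boundedness of $\widetilde\Psi(\mathbb{R})$ yield sequential compactness in $L_2(g^\star)$, and the whole identification-by-contradiction argument is then run in the $L_2(g^\star)$ topology (convergence of the subsequence, continuity of $\loss^{(0)}$, and uniqueness of the minimizer are all restated for that norm). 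This change of topology is a genuine step of the proof, not a formality, and your proposal skips it.

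Second, your treatment of the analogue of Lemma~\ref{lem:invert} misdescribes the compact-case argument and therefore cannot simply ``carry it through.'' The compact-case proof is not a direct non-degeneracy argument combining linear independence with boundedness of the scores: it discretizes the model onto bins of $[0,1]$, invokes Proposition~1 and Lemma~1 of \cite{gassiat2018efficient} together with the spectral estimators and quadratic-mean differentiability results of \cite{anandkumar2014tensor}, and passes to the limit in the number of bins. That machinery is formulated for densities supported on a compact interval. The paper's extension to $\mathbb{R}$ goes through a quantile transformation (as suggested in the conclusion of \cite{gassiat2018efficient}), which maps the problem back onto a compact domain; Assumption~\ref{ass:poly} is exactly what guarantees that, after this transformation, the density ratios remain bounded away from zero and infinity so that the compact-case hypotheses are met. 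Without this reduction device, your claim that bounded ratios alone let ``the compact-case argument carry through on the real line'' leaves the invertibility of $\befffishers$ --- the linchpin of the rate for $\hpi$ --- unjustified.
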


\section{Simulation} \label{sec:simulation}

In this section, we illustrate the finite-sample performance of the proposed smoothed likelihood estimator on a simple benchmark mixture model.  
Our main objective is to assess the empirical behavior of both the estimated mixing proportions and the component densities, and to verify whether the convergence rates suggested by the theory are observed in practice across different underlying distributions.
 
We consider a two-component mixture model in dimension $d = 3$,  
$g^\star(\bx) = \tfrac{1}{3} f_1(\bx) + \tfrac{2}{3} f_2(\bx)$,
where the components $f_1$ and $f_2$ are \emph{product densities} with identical marginals up to a location shift of order $1/\sqrt{d}$.  
Specifically, for each component $u \in \{1,2\}$ and each coordinate $j$,
\[
X_{ij}^{(u)} \sim F_0(\,\cdot\, + (-1)^u / \sqrt{d}),
\]
where $F_0$ is either the standard Gaussian, Student-$t_3$, or Laplace distribution.  
This setting ensures partial overlap between the components, thus providing a realistic and moderately challenging mixture identification problem. For each choice of the baseline law $F_0$ and each sample size $n \in \{ 200,400,800,1600,3200\}$,  
we generated $1000$ independent samples.  
The smoothed likelihood estimator was computed using the \texttt{npEM} algorithm from the \texttt{mixtools} package in R \citep{mixtools}, with a bandwidth set to
$
h = \mathrm{sd}(X)\, n^{-1/5},
$
in accordance with the theoretical prescription of the model.  
 
Two aspects were evaluated:   we recorded the absolute deviation of the proportions of the first component $|\hat{\pi}_1 - 1/3|$ and  for each component and marginal, we computed the $L^1$ distance between the estimated univariate density and the true one. The results are summarized in terms of scaled errors, \emph{i.e.} $n^{2/5 - \varepsilon}\, \mathbb{E}[|\hat{\pi}_1 - 1/3|]$ and $n^{2/5 - 0.001}\, \mathbb{E}\!\left[ \|\hat{f}_{u,h} - f_u\|_1^2 \right]$, with $\varepsilon=0.001$. Table~\ref{tab:prop_error} reports the scaled errors on the estimated mixing proportions, while Table~\ref{tab:density_error} reports the corresponding scaled $L^1$ errors for the component densities.

\begin{table}[ht]
\centering
\begin{tabular}{rrrrrrrr}
  \hline
& 200 & 400 & 800 & 1600 & 3200 \\ 
  \hline
Gaussian & 0.62 & 0.60 & 0.55 & 0.51 & 0.49 \\ 
  Student & 1.03 & 1.00 & 0.92 & 0.74 & 0.73 \\ 
  Laplace & 0.40 & 0.35 & 0.34 & 0.34 & 0.32 \\ 
   \hline
\end{tabular}
\caption{Scaled errors on estimated mixing proportions: $n^{2/5 - \varepsilon}\, |\hat{\pi}_1 - 1/3|$.}
\label{tab:prop_error}
\end{table}

\begin{table}[ht]
\centering
\begin{tabular}{rrrrrrrr}
  \hline
 &  200 & 400 & 800 & 1600 & 3200 \\ 
  \hline
Gaussian  & 0.54 & 0.28 & 0.17 & 0.13 & 0.10 \\ 
  Student & 1.46 & 1.03 & 0.64 & 0.39 & 0.36 \\ 
  Laplace & 0.47 & 0.39 & 0.34 & 0.29 & 0.24 \\ 
   \hline
\end{tabular}
\caption{Scaled $L^1$ errors for component densities: $n^{2/5 - \varepsilon}\, \mathbb{E}[\|\hat{f}_{u,h} - f_u\|_1^2]$.}
\label{tab:density_error}
\end{table}

For all distributions, the scaled errors decrease as $n$ increases, showing that both the mixing proportion and density estimates improve with larger sample sizes.  
Gaussian and Laplace mixtures reach very small errors for the largest $n$, illustrating stable estimation.  
Student-$t_3$ mixtures converge more slowly due to heavy tails, which increase variability in the kernel density estimates.  Overall, these results validate the theoretical findings derived in Sections 4--5:  
the smoothed likelihood estimator achieves the expected rate of convergence for both the finite-dimensional parameters and the nonparametric component densities.  
They also illustrate the practical influence of the underlying distribution, with heavy-tailed components requiring larger sample sizes for stable estimation.

\section{Conclusion}\label{sec:conclusion}

In this paper, we studied the problem of parameter estimation in semi-parametric finite mixture models where each component density is represented as a product of univariate densities.
Unlike existing approaches based on data discretization or tensor decompositions, our analysis focused on the estimator obtained by maximizing a smoothed version of the log-likelihood function, in which each component density is replaced by the exponential of the convolution between a kernel and its logarithm.

We established the consistency of both the finite- and infinite-dimensional estimators under standard identifiability and regularity assumptions, as the sample size increases and the bandwidth decreases at an appropriate rate.
Furthermore, by exploiting the convexity properties of the smoothed likelihood and a key inequality linking successive iterations of the MM algorithm (see Lemma~\ref{lem:prel}), we derived convergence rates that explicitly characterize the impact of the smoothing parameter on the estimation accuracy.
The subsequent analysis of the profile smoothed likelihood provided additional insight into how the presence of nuisance infinite-dimensional parameters modifies the asymptotic behavior of the estimators for the mixing proportions, and in particular how smoothing affects their convergence rate.

The rates obtained are not claimed to be optimal. Improving them while preserving the spirit of the approach would likely require sharper lower bounds on the Kullback–Leibler divergence than those provided by Pinsker’s inequality.
Such refinements would probably come at the cost of stronger regularity or separation assumptions on the component densities, ensuring better local identifiability of the mixture structure.

Overall, our theoretical results provide the first formal guarantees for the smoothed likelihood approach introduced by \cite{levine2011maximum}, thereby offering a principled justification for its practical use in semi-parametric mixture models.
Beyond their methodological implications, these results open the way to several extensions.
Future research directions include establishing the asymptotic normality of the finite-dimensional estimators, developing data-driven bandwidth selection rules, and extending the analysis to models incorporating covariates or dependence structures within components.
Another promising avenue is the study of the algorithmic convergence properties of the MM procedure and its possible acceleration through stochastic or proximal variants.

\section*{Acknowledgements}
Michael Levine's research has been partially funded by the NSF-DMS grant \# 2311103. 
\bibliographystyle{abbrvnat}
\bibliography{biblioConsistency}
\begin{appendix}

\section{Consistency}

\begin{proof}[Proof of Lemma~\ref{lem:taylor}]
   A Taylor expansion of order $2$ of the logarithm implies that
$$\ln \psi_{k,j}(u+vh) = \ln \psi_{k,j}(u) + vh [\ln \psi_{k,j}]'(u) + (vh)^2/2 [\ln \psi_{k,j}]''(u+\alpha_u vh),$$
with $|\alpha_u|\leq 1$. Hence, for any $\psi_{k,j} \in \Psi(\mathcal{X}_j)$,
$$
(\kernel_h \star \ln \psi_{k,j})(u) = \ln \psi_{k,j} (u) + h^{2}\iota_{k,j}(u; \psi_{k,j}),
$$
where 
$$
\iota_{k,j}(u; \psi_{k,j})=\frac{1}{2}\int v^{2}\kernel(v)\left( [\ln\psi_{k,j}]''(u+\alpha_u hv  )\right)dv,
$$
for some $0\leq \alpha_u\leq 1$. Since  $\|[\ln\psi_{k,j}]''\|_\infty\leq C_3$  by definition of $\Psi(\mathcal{X}_j)$  and since $\int v^2\kernel(v) dv$ is finite as we consider a second order kernel (see Assumption~\ref{ass:boundkernel}), then it exists a finite constant $C$, such that
$$
\sup_{\psi_{k,j}\in\Psi(\mathcal{X}_j)} \sup_{u\in\mathcal{X}_j} |\iota_{k,j}(u; \psi_{k,j}) | \leq C.
$$
Hence, we have
\begin{equation} \label{eq:dev}
\smooth_j \psi_{k,j}(u) = \psi_{k,j}(u) \exp[h^2 \iota_{k,j}(u; \psi_{k,j})].
\end{equation}
Hence, using a Taylor expansion of the exponential, we have 
\begin{equation*}
\smooth_j \psi_{k,j}(u)= \psi_{k,j}(u)  + h^{2} \psi_{k,j}(u) \iota_{k,j}(u; \psi_{k,j}) \exp(\beta_u h^2 \iota_{k,j}(u; \psi_{k,j})),
\end{equation*}
for some $0\leq \beta_u \leq 1$.
Hence, since $\psi_{k,j}$ and  $ \iota_{k,j}(.; \psi_{k,j}) $ are bounded uniformly on $\psi_{k,j}$ we have
\color{black}
$$
\sup_{\psi_{k,j} \in\Psi(\mathcal{X}_j)} \|\psi_{k,j} - \smooth_j\psi_{k,j}\|_\infty = O(h^2).
$$
Combining \eqref{eq:dev} and the definition of $f^{(h)}_{\bpi,\bpsi}$ leads to
$$
f^{(h)}_{\bpi,\bpsi}(\bx)   = g_{\bpi,\bpsi}(\bx) \left(1+\sum_{k=1}^K \omega_{\bpi,\bpsi,k}^{(0)}(\bx) \tau^{(h)}_{\bpi,\bpsi,k}(\bx) \right).
$$
with
$$\tau^{(h)}_{\bpi,\bpsi,k}(\bx)=\exp\left[h^2 \sum_{j=1}^J \iota_{k,j}(x_j; \psi_{k,j})\right]-1$$
and $\omega_{\bpi,\bpsi,k}^{(0)}(\bx) $ corresponds to the posterior probabilities of classification obtained without smoothing that satisfies  $0\leq \omega_{\bpi,\bpsi,k}^{(0)}(\bx)\leq 1$ and $\sum_{k=1}^K \omega_{\bpi,\bpsi,k}^{(0)}(\bx)  =1$ and that are defined by
$$
\omega_{\bpi,\bpsi,k}^{(0)}(\bx) = \frac{\pi_k  \prod_{j=1}^J \psi_{k,j}(\bx)}{g_{\bpi,\bpsi}(\bx)}.
$$
By a Taylor expansion of the exponential, we have 
\begin{equation}\label{eq:tau}
\sup_{(\bpi,\bpsi)\in\Theta_K}\max_{k=1,\ldots,K}\|\tau^{(h)}_{\bpi,\bpsi,k} \|_\infty=O(h^2).
\end{equation}
Hence,  noting that $\sup_{(\bpi,\bpsi)\in\Theta_K}\|g_{\bpi,\bpsi}(\bx)\|_\infty \leq C_1^J$, and
$$
f^{(h)}_{\bpi,\bpsi}(\bx) -g_{\bpi,\bpsi}(\bx)  = g_{\bpi,\bpsi}(\bx) \sum_{k=1}^K \omega_{\bpi,\bpsi,k}^{(0)}(\bx)\tau^{(h)}_{\bpi,\bpsi,k}(\bx),
$$
then we have
$$\sup_{(\bpi,\bpsi)\in\Theta_K} \|g_{\bpi,\bpsi} - f^{(h)}_{\bpi,\bpsi}\|_\infty=O(h^2).$$
Using the definition of the loss function for any positive $h$ (see \eqref{eq:loss}) and for $h=0$ (see \eqref{eq:loss0}), we have
$$
\lossh(\bpi,\bpsi) = \loss^{(0)}(\bpi,\bpsi) -  \int_{\mathcal{X}} g^\star(\bx) \ln  \left(1+\sum_{k=1}^K \omega_{\bpi,\bpsi,k}^{(0)}(\bx) \tau^{(h)}_{\bpi,\bpsi,k}(\bx) \right) d\bx.
$$
Therefore, we have
$$
|\lossh(\bpi,\bpsi) - \loss^{(0)}(\bpi,\bpsi) |\leq \int_{\mathcal{X}} g^\star(\bx)\left| \ln  \left(1+\sum_{k=1}^K \omega_{\bpi,\bpsi,k}^{(0)}(\bx) \tau^{(h)}_{\bpi,\bpsi,k}(\bx) \right) \right|d\bx.
$$
%Since $\|\tau_{\bpi,\psi}\|_\infty$ is upper-bounded uniformly on  $(\bpi,\bpsi)\in\Theta_K$ and since $g_{\bpi,\bpsi}$ is greater than $c_1>0$ for any element of $\Theta_K$, there exists a positive constant $C$ such that
%$$
%\sup_{(\bpi,\bpsi)\in\Theta_K} \left\| \frac{\tau_{\bpi,\bpsi}(\bx)}{g_{\bpi,\bpsi}(\bx)} \right\|_\infty \leq C
%$$
%Note that $\| \frac{\tau_{\bpi,\bpsi}}{g^\star}\|_\infty$  since  $\|\tau_{\bpi,\psi}\|_\infty$ is finite and $g_{\bpi,\bpsi}$ is lower-bounded. 
Using \eqref{eq:tau}, there exists $h_0>0$ such that for any $h\leq h_0$ we have,
$$\sup_{(\bpi,\bpsi)\in\Theta_K}\max_{k=1,\ldots,K}\left|\sum_{k=1}^K \omega_{\bpi,\bpsi,k}^{(0)}(\bx) \tau_{\bpi,\bpsi,k}^{(h)}(\bx)\right|\leq 1/2,$$
then using the inequality $|\ln (1+u)|\leq 2|u|$ that holds when $u\in[-1/2,1/2]$, if $h<h_0$ we have
$$
|\lossh(\bpi,\bpsi) - \loss^{(0)}(\bpi,\bpsi) | \leq 2 \int_{\mathcal{X}} g^\star(\bx) \left|\sum_{k=1}^K \omega_{\bpi,\bpsi,k}^{(0)}(\bx) \tau_{\bpi,\bpsi,k}^{(h)}(\bx)\right| d\bx.
$$
Therefore, noting that \eqref{eq:tau} combined with the properties of $\omega_{\bpi,\bpsi,k}^{(0)}(\bx)$ implies that 
$$
\sup_{(\bpi,\bpsi)\in\Theta_K} \sup_{\bx\in\mathcal{X}}\left|\sum_{k=1}^K \omega_{\bpi,\bpsi,k}^{(0)}(\bx) \tau_{\bpi,\bpsi,k}^{(h)}(\bx)\right|=O(h^2),
$$
then taking the supremum over $(\bpi,\bpsi)\in\Theta_K$ in both sides of the previous equation leads to
$$\sup_{(\bpi,\bpsi)\in\Theta_K}|\lossh(\bpi,\bpsi) - \loss^{(0)}(\bpi,\bpsi) | =O(h^2).$$
\end{proof}

  \begin{proof}[Proof of Lemma~\ref{lem:deviation}]
    To establish the result, we start by giving some properties of the functional space $\Gamma^{(h)}(\mathcal{X}_j)$ that is defined as the image of the smoothing operator $\smooth_j$  applied to the elements of $\Psi(\mathcal{X}_j)$. Hence, we can define this space as 
    $$
    \Gamma^{(h)}(\mathcal{X}_j) =\{\gamma^{(h)}=\smooth_j \psi_j; \psi_j\in\Psi(\mathcal{X}_j)\}.
    $$
     Since any element of $\Psi(\mathcal{X}_j)$ is strictly positive, then $\ln \psi_j$ with $\psi_j\in\Psi(\mathcal{X}_j)$ is a continuous function $\mathcal{X}_j$. In addition, the kernel is also a continuous function on $\mathcal{X}_j$.  Therefore, by composition of continuous functions, any element of $\Gamma^{(h)}(\mathcal{X}_j) $ is a continuous function on $\mathcal{X}_j$. Let $\gamma^{(h)}$ be a particular element of $\Gamma^{(h)}(\mathcal{X}_j)$, then there exists an element $\psi_j\in\Psi(\mathcal{X}_j)$ such that 
     $$
     \gamma^{(h)} = \exp\left(\kernel_h \star \ln \psi_j\right).
     $$
     Hence, using the fact that the exponential is a non-decreasing function, we have
     $$
     \sup_{u\in\mathcal{X}_j} \gamma^{(h)}(u) = \exp\left[\sup_{u\in\mathcal{X}_j}\left(\kernel_h \star \ln \psi_j\right)(u)\right].
     $$
     Since $\psi_j$ is upper bounded by $C_1>0$ and that the kernel integrate to one over $\mathcal{X}_j$ then
     $$\sup_{u\in\mathcal{X}_j}\left(\kernel_h \star \ln \psi_j\right)(u) \leq \ln C_1,$$
     leading that $\sup_{u\in\mathcal{X}_j} \gamma^{(h)}(u)\leq C_1$. Noting that by construction $\gamma^{(h)}(u) >0$ implies that
     $$
     \| \gamma^{(h)}\|_\infty \leq C_1,
     $$
     leading that $\gamma^{(h)}$ is bounded uniformly on $h$ and on $\Psi(\mathcal{X}_j)$ such that
     $$
     \sup_{\gamma^{(h)} \in \Gamma^{(h)}(\mathcal{X}_j)}     \|\gamma^{(h)}\|_\infty \leq  C_1.
     $$
    We have using the Leibniz integral rule then a variable change,
    \begin{align*}
        \frac{\partial}{\partial u}\left(\left(\kernel_h \star \ln \psi_j\right)(u)\right)&=\frac{\partial}{\partial u}\int_{\mathcal{X}_j} \frac{1}{h}\kernel\left(\frac{u-w}{h} \right) \ln \psi_j(w) dw\\
        &=\int_{\mathcal{X}_j} \frac{\partial}{\partial u}\frac{1}{h}\kernel\left(\frac{u-w}{h} \right) \ln \psi_j(w) dw\\
        &=\frac{1}{h}\int_{\mathcal{X}_j} \kernel'\left(v\right) \ln \psi_j(u-vh) dw
    \end{align*}
       Hence, using the Cauchy-Schwarz inequality, we have
       $$
\int_{\mathcal{X}_j} \kernel'\left(v\right) \ln \psi_j(u-vh) dw \leq \|\kernel'\|_{L_2} \|\ln \psi\|_{L_2},
$$
  since the upper-bound in the previous inequality does not depend on $u$, we have
  $$
  \|\gamma^{(h)'}\|_\infty \leq \frac{1}{h}\|\gamma^{(h)}\|_\infty \|\kernel'\|_{L_2} \|\ln \psi\|_{L_2}.
  $$
  Hence, defining $\bar C_2=C_1  \|\kernel'\|_{L_2} C_2 $, we have $\bar C_2<\infty$ since $ \|\kernel'\|_{L_2}$ is finite by assumption and
     $$
      \sup_{\gamma^{(h)} \in \Gamma^{(h)}(\mathcal{X}_j)}     \|\gamma^{(h)'}\|_\infty =\bar C_2 h^{-1}.
     $$
     Since $\mathcal{X}_j$ is compact, we also have that the $L_2$ norms of any element of $\Gamma^{(h)}(\mathcal{X}_j)$ and its derivative are less than $\tilde C_1$ and $\tilde C_2 h^{-1/2}$ respectively, where $\tilde C_1$ is the product between  $\bar C_1$ and the length of $\mathcal{X}_j$ and where $\tilde C_2$ is the product between  $\bar C_2$ and the length of $\mathcal{X}_j$. Therefore, we define $\mathcal{W}^{1,2,r}\color{black}(\mathcal{X}_j)$ as the Sobolev class of order 1 with radius $r$ with respect to the norm $\|\cdot\|_{W_1}$ defined by
     \begin{equation} \label{eq:sobolev}
     \mathcal{W}^{1,2,r}(\mathcal{X}_j) =\left\{u: \mathcal{X}_j \mapsto \mathbb{R},\, \|u\|_{W_1}\leq r\right\},
     \end{equation} %{\bf There is one more issue concerning the definition of a Sobolev class. Typically, for example in van de Geer and/or in Tsybakov's book, a Sobolev's class is defined as a set of all functions in a given Sobolev space such that the integral $\int [u^{'}]^2\le r$. Typically, it is this integral, and not the entire squared orm $||u||_{W^{1,2}}^{2}$ is bounded in the definition of a Sobolev class}
     where for any univariate function $u$ we define $\|u\|_{W_1}^2=\|u\|_{L_2}^2 + \|u'\|_{L_2}^2$,      we have
     $$
     \Gamma^{(h)}(\mathcal{X}_j) \subseteq     \mathcal{W}^{1,2,\tilde C_1 + \tilde C_2 h^{-1/2}}(\mathcal{X}_j).
     $$
     Let $N_{[]}(\varepsilon,\mathcal{G},\|.\|)$ be the smallest value of $N$ for which there exist pairs of function $\{[g_j^L,g_j^u]\}_{j=1}^N$ such that $\|g_j^u - g_j^L\| \leq \varepsilon$ for all $j=1,\ldots,N$ and such that for any $g\in\mathcal{G}$ there is a $j=j(g)\in\{1,\ldots,N\}$ such that $g_j^L\leq g \leq g_j^u$. Then $\mathcal H(\varepsilon,\mathcal{G},\|.\|)=\ln N_{[]}(\varepsilon,\mathcal{G},\|.\|)$ is the $\varepsilon$-entropy with bracketing of $\mathcal{G}$. Using the property of the Sobolev class, \cite[Theorem 2.7.1]{van1996weak} (see also \cite[Theorem 2.4]{geer2000empirical} or \cite[Example 19.10]{van2000asymptotic}) states that the $\varepsilon$-entropy with bracketing of a Sobolev class with radius $1$ is upper-bounded as follows 
     $$
    \mathcal H(\varepsilon,   \mathcal{W}^{1,2,1}(\mathcal{X}_j),\|.\|_\infty) \lesssim  \frac{1}{\varepsilon} ,
     $$
     where $a\lesssim b$ means that there exists a positive constant $C$ such that $a\leq Cb$. 
     For any radius $r>0$, $\mathcal{W}^{1,2,r}(\mathcal{X}_j)$ can be defined with a $r$ scaling factor of the elements  of $\mathcal{W}^{1,2,1}(\mathcal{X}_j)$ such that
     $$
 \mathcal{W}^{1,2,r}((\mathcal{X}_j)=\{ rw; w\in  \mathcal{W}^{1,2,1}((\mathcal{X}_j)\}.
     $$
     Hence, we have the following relation between the entropies with bracketing
     $$
      \mathcal H(\varepsilon,  \mathcal{W}^{1,2,r}(\mathcal{X}_j),\|.\|_\infty)= \mathcal H(\varepsilon/r,  \mathcal{W}^{1,2,1}(\mathcal{X}_j),\|.\|_\infty).
     $$
    Using the previous equation with $r=\tilde C_1 + \tilde C_2 h^{-1}$ and the upper bound stated for $    \mathcal H(\varepsilon,  \mathcal{W}^{1,2,1}(\mathcal{X}_j),\|.\|_\infty)$, we have
     $$
 \mathcal    H(\varepsilon;  \Gamma^{(h)}(\mathcal{X}_j),\|.\|_\infty) \lesssim \frac{1}{\varepsilon h} .
     $$
     Therefore, the $\varepsilon$-entropy with bracketing of the $J$-dimensional product space $\Gamma^{(h)}(\mathcal{X})=\Gamma^{(h)}(\mathcal{X}_1)\times \ldots \times \Gamma^{(h)}(\mathcal{X}_J)$ is
     $$
 \mathcal    H(\varepsilon;  \Gamma^{(h)}(\mathcal{X}),\|.\|_\infty) \lesssim \frac{1}{\varepsilon h} .
     $$
Let $\tau^{(h)}_{\bpi,\bpsi}=\ln f^{(h)}_{\bpi,\bpsi}$, considering the space
     $$
     T_{h}(\mathcal{X})=\{\tau^{(h)}_{\bpi,\bpsi},\, (\bpi, \bpsi)\in\Theta_K\},
     $$
     we have
$$
 \mathcal    H(\varepsilon;  T_h(\mathcal{X}),\|.\|_\infty) \lesssim \frac{1}{\varepsilon h} .
     $$
   Since, we have
            $$
     \int_0^\delta H^{1/2}(\varepsilon;  T_h(\mathcal{X}),\|.\|_\infty) d\varepsilon \lesssim h^{-1/2}\delta,
     $$
     then using \cite[Lemma 19.38]{van2000asymptotic}, we have
     $$
     \Eg \left[\sup_{(\bpi,\bpsi)\in\Theta_K(\mathcal{X})} \left|\frac{1}{n^{1/2}} \sum_{i=1}^n \ln f^{(h)}_{\bpi,\bpsi}(\bX_i) - \Eg\ln \ f^{(h)}_{\bpi,\bpsi}(\bX_i) \right|\right]=O(h^{-1/2}).
     $$
     The proof is concluded by noting that for any $(\bpi,\bpsi)$, we have $$ \lossnh(\bpi,\bpsi) -  \lossh(\bpi,\bpsi)=n^{-1/2}\left|\frac{1}{n^{1/2}} \sum_{i=1}^n \ln  f^{(h)}_{\bpi,\bpsi}(\bX_i) - \Eg\ln  f^{(h)}_{\bpi,\bpsi}(\bX_i) \right|,$$
     then by applying Markov's inequality.
 \end{proof}

\begin{proof}[Proof of Theorem~\ref{thm:consistence}]
Note that by triangular inequality, we have
$$
|\lossnh(\bpi,\bpsi) - \loss^{(0)}(\bpi,\bpsi)| \leq |\lossnh(\bpi,\bpsi) - \lossh(\bpi,\bpsi)| + |\lossh(\bpi,\bpsi) - \loss^{(0)}(\bpi,\bpsi)|.
$$
Combing Lemmas~\ref{lem:taylor} and~\ref{lem:deviation} provides
$$
\sup_{(\bpi,\bpsi)\in\Theta_K}|\lossnh(\bpi,\bpsi) - \loss^{(0)}(\bpi,\bpsi)| = O_\mathbb{P}(n^{-1/2}h^{-1/4} + h^2).
$$
Using the conditions on the bandwidth  establishes the uniform convergence in probability of $\lossnh$ to $\loss^{(0)}$ meaning that
$$
\sup_{(\bpi,\bpsi)\in\Theta_K} |\lossnh(\bpi,\bpsi) - \loss^{(0)}(\bpi,\bpsi)| = o_\mathbb{P}(1).
$$
We now establish the convergence in probability of $\loss^{(0)}(\hpi,\hpsi)$ to $\loss^{(0)}(\bpi^\star,\bpsi^\star)$. Due to the uniform convergence in probability of $\lossnh$ to $\loss^{(0)}$, we have $$\lossnh(\hpi,\hpsi)-\loss^{(0)}(\hpi,\hpsi)= o_{\mathbb{P}}(1),$$ leading that
\begin{equation*}
  \loss^{(0)}(\hpi,\hpsi)- \loss^{(0)}(\bpi^\star,\bpsi^\star)  =    \lossnh(\hpi,\hpsi)- \loss^{(0)}(\bpi^\star,\bpsi^\star) + o_\mathbb{P}(1).
\end{equation*}
 It remains to show that the difference $\lossnh(\hpi,\hpsi)- \loss^{(0)}(\bpi^\star,\bpsi^\star)$ is also $o_{\mathbb{P}}(1)$. Since $(\bpi^\star,\bpsi^\star)$ is the minimizer of $\loss^{(0)}$, we have $ \loss^{(0)}(\bpi^\star,\bpsi^\star) \leq \loss^{(0)}(\hpi,\hpsi) $ leading, using the uniform convergence in probability of $\lossnh$, that we have 
\begin{equation*}\label{eq:dessous}
   \loss^{(0)}(\bpi^\star,\bpsi^\star) \leq  \lossnh(\hpi,\hpsi) + o_{\mathbb{P}}(1).
\end{equation*}
Since $\hat\theta_{h,n}$ is a minimizer of $\lossnh$, we have $    \lossnh(\hpi,\hpsi) \leq \lossnh(\bpi^\star,\bpsi^\star)$ leading, using the uniform convergence in probability of $\lossnh$, that we have
\begin{equation*}\label{eq:dessus}
    \lossnh(\hpi,\hpsi) \leq  \loss^{(0)}(\bpi^\star,\bpsi^\star) + o_{\mathbb{P}}(1).
\end{equation*}
By combining the two last inequalities, we obtain that 
\begin{equation}\label{eq:cvprloss}
\lossnh(\hpi,\hpsi)- \loss^{(0)}(\bpi^\star,\bpsi^\star) = o_{\mathbb{P}}(1),
\end{equation}
which concludes the proof of the convergence in probability of $\loss^{(0)}(\hpi,\hpsi)$ to $\loss^{(0)}(\bpi^\star,\bpsi^\star)$ meaning
$$|\loss^{(0)}(\hpi,\hpsi) - \loss^{(0)}(\bpi^\star,\bpsi^\star)| = o_\mathbb{P}(1).$$

Now we conclude that $(\hpi,\hpsi)$ converges in probability to $(\bpi^\star,\bpsi^\star)$. Note that, for any $j= 1, \ldots, J$, $\mathcal{X}_j$ is a compact space. Hence, considering the supremum norm implies that $\Psi(\mathcal{X}_j)$ is equicontinuous because it is composed of Sobolev functions of order 1 defined on a compact space. In addition, the elements of $\Psi(\mathcal{X}_j)$ are uniformly bounded by $C_1$. Therefore, by the Arzelà–Ascoli theorem, $\Psi(\mathcal{X}_j)$ has the sequential compactness property, and so does $\Theta_K$. Since $\Theta_K$ is defined as a product of compact spaces, it is itself compact. Suppose, for the sake of contradiction, that $(\hpi,\hpsi)$ does not converge to $(\bpi^\star,\bpsi^\star)$. As the parameter space is sequentially compact, one can find a subsequence $(\hat\bpi_{h,n_k},\hat\bpsi_{h,n_k})_k$ which converges in probability to some $(\widetilde\bpi,\widetilde\bpsi) \neq (\bpi^\star,\bpsi^\star)$. By the continuity of $\loss^{(0)}$, $\loss^{(0)}(\hat\bpi_{h,n_k},\hat\bpsi_{h,n_k})$ converges in probability to $\loss^{(0)}(\widetilde\bpi,\widetilde\bpsi)$. On the other hand, by \eqref{eq:cvprloss}, $\loss^{(0)}(\hat\bpi_{h,n_k},\hat\bpsi_{h,n_k})$ converges in probability to $\loss^{(0)}(\bpi^\star,\bpsi^\star)$. Therefore, we have $\loss^{(0)}(\bpi^\star,\bpsi^\star) = \loss^{(0)}(\widetilde\bpi,\widetilde\bpsi)$. Recall that $(\hat\bpi_{h,n_k},\hat\bpsi_{h,n_k}) \neq (\bpi^\star,\bpsi^\star)$. This contradicts the parameter identifiability property ensured by Assumption~\ref{ass:controlvariancetrue}, which implies that $(\bpi^\star,\bpsi^\star)$ is the unique minimizer of $\loss^{(0)}$. Therefore, $(\hpi,\hpsi)$ converges in probability to $(\bpi^\star,\bpsi^\star)$.

\end{proof}

\section{Profiling the loss functions}

\begin{proof}[Proof of Lemma~\ref{lem:unicity}]
First, as in the Appendix of \citep{levine2011maximum} , let us define a set $B$ containing all possible functions $M_{h,\bpi}^{\{p\}}[\bpsi]$ except, possibly, the initial $\bpsi^{0}$. Under Assumption~\ref{ass:controlvariancekernel}.\ref{ass:boundkernel}, from  \cite[Lemma 4.1]{zhu2016theoretical}, we find that the functional  $\bpsi \mapsto \lossnh (\bpi,\bpsi)$ is well defined on the set $B$ since it is bounded from below by -$\ln b_2(h)$.   Lemma A3 of \citep{levine2011maximum}  guarantees lower semicontinuity and the strict convexity of any function belonging to the set $B$. Hence, for any $\bpsi$, the sequence $\bpsi,M^{(h,n,\bpi)}[\bpsi],M^{(h,n,\bpi)\{2\}}[\bpsi],\ldots$ converges to a global minimizer of the objective function $\lossnh$, where $f^{\{p\}}$ denotes $p$ iterations of function $f$ in the sense that $f^{\{p\}}=f(f^{\{p-1\}})$. 
Assumptions~\ref{ass:controlvariancetrue}.\ref{ass:indptlin} and the fact that $\pi_k>0$ as $\bpi$ is at the interior of $\mathcal{S}_K^r$ ensure the identifiability of the parameters of the density $g_{\bpi,\bpsi^\star}$. Indeed, since the proportions are known, fixed and different, there is no possibility that label swapping defines the same distribution and thus, $\lossnh(\bpi,\bpsi)$ has a single global minimizer $\hpsipi$ when $\bpi$ is fixed. Hence, any sequence $\bpsi,M^{(h,n,\bpi)}[\bpsi],M^{(h,n,\bpi)\{2\}}[\bpsi],\ldots $ converges to $\hpsipi$ meaning that
\begin{equation} \label{eq:iteration}
    \forall \bpsi\in\Psi_{K}(\mathcal{X}),\, \lim_{p\to\infty} M^{(h,n,\bpi)\{p\}}[\bpsi]=\hpsipi.
\end{equation}
As a direct consequence of \cite[Corollary 3.2]{zhu2016theoretical}, if $\hpsipi$ is a minimizer of $\lossnh(\bpi,\bpsi)$ with respect to $\bpsi$  then $M^{(h,n,\bpi)}[\hpsipi]=\hpsipi$. Now, suppose that there exists $\bar\bpsi$ such that  $M^{(h,n,\bpi)}[\bar\bpsi]=\bar\bpsi$ and $\bar\bpsi\neq \hpsipi$. Obviously, we have $\lim_{p\to\infty} M^{(h,n,\bpi)\{p\}}[\bar\bpsi]=\bar\bpsi\neq \hpsipi$ which contradict \eqref{eq:iteration}. Hence, $\hpsipi$ is the unique fixed point of $M^{(h,n,\bpi)}[\bpsi]$. The result on $\lossh(\bpi,\bpsi)$ follows by the same argument as for $\lossnh(\bpi,\bpsi)$ above.
\end{proof}

\section{Control of the estimators of the finite dimensional parameters}

\begin{proof}[Proof of Lemma~\ref{lem:prel}]
Let
\begin{equation}\label{eq:mu}
\mu^{(h,n,\pi)}(\bpsi)= \lossnh(\bpi,\bpsi) - \lossnh(\bpi,M^{(h,n,\bpi)}[\bpsi]) .    
\end{equation}
By definition, we have
$$
 \mu^{(h,n,\pi)}(\bpsi) = \frac{1}{n} \sum_{i=1}^n \ln \frac{\sum_{k=1}^K \pi_k \prod_{j=1}^J\smooth_j M_{k,j}^{(h,n,\bpi)}[\bpsi](X_{i,j})}{\sum_{k=1}^K \pi_k \prod_{j=1}^J\smooth_j\psi_{k,j}(X_{i,j})}.
$$
Using the definition of $\omega_{k}^{(h)}$ given by \eqref{eq:weights}, we have
$$\mu^{(h,n,\pi)}(\bpsi)= \frac{1}{n} \sum_{i=1}^n \ln  \sum_{k=1}^K \omega_{\bpi,\bpsi,k}^{(h)}(\bx) \frac{\prod_{j=1}^J\smooth_j M_{k,j}^{(h,n,\bpi)}[\bpsi](X_{i,j})}{\prod_{j=1}^J\smooth_j\psi_{k, j}(X_{i,j})}.
$$
Therefore, using Jensen's inequality, we have
$$
\mu^{(h,n,\pi)}(\bpsi)  \geq   \frac{1}{n} \sum_{i=1}^n \sum_{k=1}^K \omega_{k}^{(h)}(\bX_i;\bpi, \bpsi )   \ln\frac{\prod_{j=1}^J\smooth_j M_{k,j}^{(h,n,\bpi)}[\bpsi](X_{i,j})}{\prod_{j=1}^J\smooth_j\psi_{k, j}(X_{i,j})}.    
$$
This implies that we have
$$
\mu^{(h,n,\pi)}(\bpsi) \geq \sum_{k=1}^K \sum_{j=1}^J  \eta_{k,j}^{(h,n,\bpi)}(\bpsi).
$$
with 
$$
\eta_{k,j}^{(h,n,\bpi)}(\bpsi)=\frac{1}{n}\sum_{i=1}^n \omega_{\bpi,\bpsi,k}^{(h)}(\bX_i) \ln  \frac{ \smooth_j M_{k,j}^{(h,n,\bpi)}[\bpsi](X_{i,j})}{ \smooth_j\psi_{k, j}(X_{i,j})}
$$
Using the definition of the smoothing $\smooth_j$, we have
\begin{align*}
\eta_{k,j}^{(h,n,\bpi)}(\bpsi) &= \frac{1}{n  } \sum_{i=1}^n \omega_{\bpi,\bpsi,k}^{(h)}(\bX_i) \int_{\mathcal{X}_j} \frac{1}{h}\kernel\left(\frac{u-X_{i,j}}{h} \right)\ln  \frac{ M_{k,j}^{(h,n,\bpi)}[\bpsi] (u)}{ \psi_{k, j}(u)} du\\
    &=\int_{\mathcal{X}_j} \frac{1}{nh  } \sum_{i=1}^n \omega_{\bpi,\bpsi,k}^{(h)}(\bX_i) \kernel\left(\frac{u-X_{i,j}}{h} \right)\ln  \frac{  M_{h,n,k j}[\bpsi](u)}{ \psi_{k, j}(u)} du\\
    &=\pi_k \int_{\mathcal{X}_j}M_{k,j}^{(h,n,\bpi)}[\bpsi](u)\ln  \frac{ M_{k,j}^{(h,n,\bpi)}[\bpsi](u)}{\psi_{k, j}(u)} du\\
    &=\pi_k \KL(M_{k,j}^{(h,n,\bpi)}[\bpsi],\psi_{k,j}),
\end{align*}
where the last line is obtained by noting that $\int_{\mathcal{X}_j}\psi_{k,j}(u)du=\int_{\mathcal{X}_j} M_{k,j}^{(h,n,\bpi)}[\bpsi](u)du=1$. Hence, we have
$$
\mu^{(h,n,\pi)}(\bpsi) \geq  \sum_{k=1}^K \pi_k \sum_{j=1}^J   \KL(M_{k,j}^{(h,n,\bpi)}[\bpsi],\psi_{k,j}).
$$
The Kullback Leibler divergence can be lower-bounded by the $L_1$-norm as follows \citep[(3.21), p.16]{eggermont2001maximum}, using for instance the Pinsker's inequality,
$$
\KL(g_1,g_2) \geq \frac{1}{4} \|g_1 - g_2\|^2_{1},
$$
with $\|g_1 - g_2\|_1^2 = \int |g_1 - g_2|$. Therefore, we have
$$
\mu^{(h,n,\pi)}(\bpsi)\geq  \frac{1}{4  }    \sum_{k=1}^K \pi_k \sum_{j=1}^J  \|M_{k,j}^{(h,n,\bpi)}[\bpsi] - \psi_{k,j}\|_1^2.
$$
\end{proof}

\begin{proof}[Proof of Lemma~\ref{lem:inequality}]
For any positive integer $q$, using the definition of $\mu^{(h,n,\bpi)}$ stated by \eqref{eq:mu}, we have 
$$
  \lossnh(\bpi,\bpsi^\star) - \lossnh(\bpi,M^{(h,n,\bpi)\{q\}}[\bpsi^\star]) = \sum_{r=0}^{q-1} \mu^{(h,n,\bpi)}(M^{(h,n,\bpi)\{r\}}[\bpsi^\star]),% - \lossnh(\bpi,M^{(h,n,\bpi)\{r+1\}}[\bpsi^\star] ) ,
$$
with the convention $M^{(h,n,\bpi)\{0\}}[\bpsi] =\bpsi$. Hence, by applying Lemma~\ref{lem:prel} to have a lower bound of each term that appears in the sum of the right hand side of the previous equation, we have 
 \begin{multline*}
  \lossnh(\bpi,\bpsi^\star) - \lossnh(\bpi,M^{(h,n,\bpi)\{q\}}[\bpsi^\star])  \geq \\\frac{1}{4} \sum_{k=1}^K \pi_k \sum_{j=1}^J \sum_{r=0}^{q-1} \| M^{(h,n,\bpi)\{r\}}_{k,j}[\bpsi^\star]- M^{(h,n,\bpi)\{r+1\}}_{k,j}[\bpsi^\star]\|^2_1.     
 \end{multline*}
Using the triangular inequality to have an lower bound of the right-hand side of the previous inequality gives us 
   $$
   \lossnh(\bpi,\bpsi^\star) - \lossnh(\bpi,M^{(h,n,\bpi)\{q\}}[\bpsi^\star])  \geq  \frac{1}{4} \sum_{k=1}^K \pi_k \sum_{j=1}^J \| \psi^\star_{k,j} - M^{(h,n,\bpi)\{q\}}_{k,j}[\bpsi^\star]\|^2_1.
 $$
 We now aim to takes the limit as $q$ tends to infinity for both sides of the previous inequality. 
Considering Lemma~\ref{lem:unicity} with initial value of the MM algorithm equal to $\bpsi^\star$ implies that the sequence $M^{(h,n,\bpi)\{q\}}[\bpsi^\star]$ converges to $\hpsipi$ as $q$ tends to infinity, leading that
$$
\lim_{q\to\infty} \lossnh(\bpi,M^{(h,n,\bpi)\{q\}}[\bpsi^\star]) =\lossnh(\bpi,\hpsipi).
$$
In addition, noting that the weights $\omega_{\bpi,\bpsi,K}^{(h)}(\bX_i)$ are positive and upper-bounded by one,  we have
 $$
M_{k,j}^{(h,n,\bpi)}[\bpsi](u) \leq \frac{1}{ n\pi_k} \sum_{i=1}^n\frac{1}{h}\kernel\left(\frac{X_{i,j} - u}{h} \right).
$$
Using the law of the large numbers and the following control of the variance (\cite{hansen2008uniform}) $$\sup_{u\in\mathbb{R}}\left|\frac{1}{ n} \sum_{i=1}^n\frac{1}{h}\kernel\left(\frac{X_{i,j} - u}{h} \right) - E_{g^\star}\left[\frac{1}{h}\kernel\left(\frac{X_{i,j} - u}{h} \right)\right]\right|=O_\mathbb{P}\left(\frac{\ln^{1/2} n}{(nh)^{1/2}}\right),$$ we have
$$
\sup_{u\in\mathbb{R}} \left|M_{k,j}^{(h,n,\bpi)}[\bpsi](u) - E_{g^\star}\left[\frac{1}{h}\kernel\left(\frac{X_{i,j} - u}{h} \right)\right] \right| \leq  O_\mathbb{P}\left(\frac{\ln^{1/2} n}{(nh)^{1/2}}\right).
$$
since the proportions are not zero. Therefore,  there exists an integrable function that is greater than $M^{(h,n,\bpi)\{q\}} [\bpsi^\star]$ for all integer $q$.  
Hence, the dominated convergence theorem implies that
\begin{align*}
  \| \psi^\star_{k,j} - M^{(h,n,\bpi)\{q\}}_{k,j}[\bpsi^\star]\|^2_1&= \| \psi^\star_{k,j} - \lim_{q\to\infty}M^{(h,n,\bpi)\{q\}}_{k,j}[\bpsi^\star]\|^2_1\\
&=    \| \psi^\star_{k,j} - \widehat{\psi}^{(h,n,\bpi)}_{k,j}\|_1^2.
\end{align*}
Therefore, we have
$$
 \lossnh(\bpi,\bpsi^\star) - \lossnh(\bpi,\hpsipi) \geq  \frac{1}{4} \sum_{k=1}^K \pi_k \sum_{j=1}^J \| \psi^\star_{k,j} -\widehat{\psi}^{(h,n,\bpi)}_{k,j}\|^2_1.
 $$
\end{proof}
\begin{proof}[Proof of Lemma \ref{lem:invert}]
First, let us construct the following discretized analogue of the original model \eqref{eq:model}-\eqref{eq:model2}. For simplicity, let us assume that all of the univariate densities are defined on $[0,1]$. It is assumed that there is a collection of partitions ${\cal I}_{M}$, $M\in {\cal M}$, ${\cal M}\subset N$ so that for each $M\in {\cal M}$, ${\cal I}_{M}=(I_{m})_{m=1}^{M}$ is a partition of $[0,1]$ by Borel sets.  Let us also denote $P^{*}$ the probability measure corresponding to the true distribution of \eqref{eq:model} - \eqref{eq:model2}. Then, a discretized version of \eqref{eq:model} - \eqref{eq:model2} is
\begin{equation}\label{model:disc}
g_{\pi,\omega;M}(\bx)=\sum_{k=1}^{K}\pi_{k}\prod_{j=1}^{J}\left(\sum_{m=1}^{M}\frac{\omega_{k,j,m}}{|I_{m}|}\mathds{1}_{I_{m}}(x_j)\right)
\end{equation}
 where $\omega_{k,j,m}\ge 0$, $\sum_{m=1}^{M}\omega_{k,j,m}=1$, $|I_{m}|$ is the Lebesgue measure of the set $I_{m}$ and $\bomega=\{\omega_{k,j,m}\}$ where $1\le m\le M$, $1\le j\le J$ and $1\le k\le K$. Note that \eqref{model:disc} implies, essentially, that the original target density functions is modeled as a convex combination of products of mixtures of step functions defined as  
 \[
 f_{\omega_{k,j,m}}(x)=\sum_{m=1}^{M}\frac{\omega_{k,j,m}}{|I_{m}|}\mathds{1}_{I_{m}}(x).
 \]
Using this notation, we call $\bomega^{\star}_{M}$ the collection of values obtained by discretizing true univariate densities. Similarly, $\bpi^{\star}$  is the vector of true probability weights.  
Let $S_{M}^{\star}=(S^{\star}_{\bpi,M},S^{\star}_{\bomega,M})$ be the score function of the parameter $(\bpi,\bomega)$ at the point $(\bpi^{\star},\bomega_{M}^{\star})$ in the model \eqref{model:disc}. Explicit expressions for these score functions are given in formulas (5)-(6) of \cite{gassiat2018efficient}. The Fisher information of the discretized model $J_{M}$ is then defined as 
\[
J_{M}=\Eg[S_{M}^{\star}(X)S_{M}^{\star}(X)^\top]
\]
Now, let us partition this matrix according to the parameters $\bpi$ and $\bomega$, denoting corresponding blocks $[J_{M}]_{\bpi,\bpi},$ $[J_{M}]_{\bomega,\bomega}$ and $[J_{M}]_{\bpi,\bomega},$ respectively. Let us denote $\tilde{\nu}_{M}$ the efficient score function for the estimation of $\bpi$ 
\[
\tilde{\nu}_{M}=S_{\bpi,M}^{\star}-[J_{M}]_{\bpi,\bomega}([J_{M}]_{\bomega,\bomega})^{-1}S_{\bomega,M}^{\star}
\]
and the efficient Fisher information $\tilde{J}_{M}$ (a $(k-1)\times (k-1)$ matrix) 
\[
\tilde{J}_{M}=[J_{M}]_{\bpi,\bomega}([J_{M}]_{\bomega,\bomega})^{-1}[J_{M}]^\top_{\bpi,\bomega}.
\]
The first step of our argument is provided by Proposition $1$ of \cite{gassiat2018efficient} that proves non-singularity of $\tilde{J}_{M}$ for a sufficiently large $M$. Note that the assumptions of Proposition $1$ of \cite{gassiat2018efficient} are satisfied due to Assumptions~\ref{ass:controlvariancetrue} and the fact that each $\psi_{k,j}^\star\in \Psi({\mathcal X}_{j})$ and ${\mathcal X}_{j}$ is compact belongs to the compact space $j=1,\ldots,J$. Indeed, by definition of $\Psi(\mathcal{X}_j)$ each $C_1>\psi_{k,j}^\star>0$, while the compactness of ${\mathcal X}_{j}$ ensures that $\psi_{k,j}^\star$ is bounded away from zero. Therefore, the ratio $\psi_{k,j}^\star/\psi_{k',j}^\star$ is bounded away from zero and infinity for any $(k,k')$ and any $j$.

With this in mind, the desired result is due to the existence of spectral estimators of components of a discretized model \eqref{model:disc} first obtained in \cite{anandkumar2014tensor}. \cite{anandkumar2014tensor} also established the differentiability of the multinomial model \eqref{model:disc} in quadratic mean; this, together with the use of van Trees inequality, results in non-singularity of $\tilde{J}_{M}$ for a sufficiently large $M.$ The next step relies on Lemma $1$ of \cite{gassiat2018efficient} that proves convergence of the sequence $\tilde{J}_{M}$ to the limiting matrix $J$ which is necessarily non-singular. More can be obtained from careful reading of the proof of Lemma 1 in \cite{gassiat2018efficient}. There, the efficient score function $\tilde{\nu}_{M}$ is defined; next, it is shown that this function converges almost sure to the limiting efficient score function equivalent to our $\tilde{\ell}_{\bpi^{*},\bpsi^{*},k}$. This convergence is established in the proof of Lemma 1 of \cite{gassiat2018efficient} using only consistency of spectral estimators of $\bpi$ proposed in \cite{anandkumar2014tensor}. Indeed, the crucial argument is to construct a consistent estimator. In their paper, \cite{gassiat2018efficient} use a bin approximation with an increasing number of bins, but the argument still holds if a kernel-based estimator is built with a bandwidth tending to zero. 
It is shown next this convergence implies $L_{2}(g_{\bpi^{\star},\bpsi^{\star}})$ convergence. This, in its own turn, implies that the limit of the sequence of $\tilde{J}_{M}$, that we denoted $J$ earlier, is equal to $\Sigma_{\bpi^{\star},\bpsi^{\star}}.$

The above argument assumes that the sample size is equal to $1.$ To extend this argument to an arbitrary sample size $n$, let us first denote $\hat\pi_{M}$ the maximum likelihood estimator of the weight parameters of the discretized model \eqref{model:disc}. Let $\sigma_{n,M}$ also be a sequence of permutations of the set $\{1,2,\ldots,k\}$ for a given $M$. For an arbitrary sample size $n$, using Theorem 5.39 in \cite{van2000asymptotic}, we find that for each $M$, the MLE $\hat\pi_{M}$ is regular and asymptotically efficient:
\[
\sqrt{n}\left(\hat\pi_{M}^{\sigma_{n,M}}-\pi^{*}\right)=
\frac{\tilde J_{M}^{-1}}{\sqrt{n}}\sum_{i=1}^{n}\tilde\nu_{M}(X_i)+R_{n}(M)
\]
with $R_{n}(M)$ being a sequence of random vectors converging to zero in $g^{\star}$-probability as $n\rightarrow \infty$. Thus, we can say that there exists a sequence $M_{n}$ that tends to infinity sufficiently slowly so that, as $n\rightarrow \infty$, $R_{n}(M)$ tends to zero in $g^{\star}$-probability. The detailed discussion can be found in the proof of Theorem $1$ of \cite{gassiat2018efficient}. Now we can say that the corresponding sequence of matrices $\tilde J_{M_{n}}^{-1}$ converges to $\tilde J^{-1}$ which is non-singular due to Lemma $1$ of \cite{gassiat2018efficient}.
\end{proof}
\begin{proof}[Proof of Theorem~\ref{thm:rateL1}]
Lemma~\ref{lem:inequality} provides a bound on the sum of the squared $L_1$ norms of the differences between the true functions $\psi^\star_{k,j}$ and their corresponding estimators obtained for fixed proportions. This bound is expressed in terms of the difference between the empirical loss function evaluated at these two parameter values, as follows
$$
\forall \bpi\in\mathcal{S}_K^r,\,\sum_{k=1}^K   \sum_{j=1}^J \| \psi^\star_{k,j} -\widehat{\psi}^{(h,n,\bpi)}_{k,j}\|^2_1 \leq 4 \frac{1}{\min \pi_k}( \lossnh(\bpi,\bpsi^\star) - \lossnh(\bpi,\hpsipi) ).
$$
Let $\mathcal{B}(\bpi^\star)$ be the ball centered in $\bpi^\star$ with radius equal to $\min \pi^\star_k/2$. Since any $\pi^\star_k$ is strictly positive, then for any $\bpi\in\mathcal{B}(\bpi^\star)$, there exists a positive constant that is greater or equal to $\frac{1}{\min \pi_k}$. Thus, there exists a positive constant $A$, such that
$$
\forall \bpi\in\mathcal{B}(\bpi^\star),\, \sum_{k=1}^K  \sum_{j=1}^J \| \psi^\star_{k,j} -\widehat{\psi}^{(h,n,\bpi)}_{k,j}\|^2_1 \leq A ( \lossnh(\bpi,\bpsi^\star) - \lossnh(\bpi,\hpsipi) ).
$$
 From Lemma~\ref{lem:deviation}, replacing the empirical version of the loss function by its theoretical version, without changing the bandwidth, leads to a term of stochastic order $n^{-1/2}h^{-1/4}$ uniformly on $(\bpi,\bpsi)$, leading that  
\begin{multline}\label{eq:upper}
 \forall \bpi\in\mathcal{B}(\bpi^\star),\, \sum_{k=1}^K \sum_{j=1}^J \| \psi^\star_{k,j} - \widehat{\psi}^{(h,n,\bpi)}_{k,j}\|^2_1 \leq O_\mathbb{P}( n^{-1/2}h^{-1/4} )+ \\A( \lossh(\bpi,\bpsi^\star) - \lossh(\bpi,\hpsipi) ).
\end{multline}
For any $\bpsi \in \Psi_K(\mathcal{X})$ and $\bpi\in\mathcal{B}(\bpi^\star)$, we have
\begin{align*}
 |\lossh(\bpi^\star,\bpsi)- \lossh(\bpi,\bpsi) |&= \left\vert\int_{\mathcal{X}} g^\star(\bx) \ln \frac{\sum_{k=1}^K \pi_k \smooth\psi_k(\bx)}{\sum_{\ell=1}^K \pi_\ell^\star \smooth\psi_\ell(\bx)} d\bx \right\vert   \\
 &= \left\vert\int_{\mathcal{X}} g^\star(\bx) \ln \left[1 +\frac{\sum_{k=1}^K (\pi_k-\pi_k^\star) \smooth \psi_k(\bx)}{\sum_{\ell=1}^K \pi_\ell^\star \smooth \psi_\ell(\bx)} \right]d\bx\right\vert    \\
 &\leq \int_{\mathcal{X}} g^\star(\bx) \left| \ln \left[1 +\frac{\sum_{k=1}^K (\pi_k-\pi_k^\star) \smooth\psi_k(\bx)}{\sum_{\ell=1}^K \pi_\ell^\star \smooth\psi_\ell(\bx)} \right]\right|d\bx.    
\end{align*}
Note that
\begin{align*}
\left| \frac{\sum_{k=1}^K (\pi_k^\star-\pi_k) \smooth\psi_k(\bx)}{\sum_{\ell=1}^K \pi_\ell^\star \smooth\psi_\ell(\bx)}\right| &=\left|\sum_{k=1}^K (\pi_k^\star-\pi_k) \frac{ \smooth\psi_k(\bx)}{\sum_{\ell=1}^K \pi_\ell^{\star} \smooth\psi_\ell(\bx)}\right|  \\
&\leq    \sum_{k=1}^K  \left|(\pi_k^\star-\pi_k) \frac{\smooth \psi_k(\bx)}{\sum_{\ell=1}^K \pi_\ell^{\star} \smooth\psi_\ell(\bx)}\right| .
\end{align*}
Since $\pi_{k}^{\star}$ and $\smooth\psi_{k}$ non negative, then 
$\sum_{\ell=1}^K \pi_\ell^{\star} \smooth\psi_\ell(\bx) \geq  \pi_k^{\star} \smooth\psi_k(\bx)$, for any particular $k=1,\ldots,K$.
This, in its own turn, implies that
$$
\frac{\smooth \psi_k(\bx)}{\sum_{\ell=1}^K \pi_\ell^\star \smooth\psi_\ell(\bx)}\leq \frac{1}{\pi_k^{\star}}.
$$
and
$$
\left| \frac{\sum_{k=1}^K (\pi_k^\star-\pi_k) \smooth\psi_k(\bx)}{\sum_{k=1}^K \pi_k^\star \smooth\psi_k(\bx)}\right|\leq \sum_{k=1}^K\left| \frac{\pi_k-\pi_k^\star}{\pi_k^{*}}\right|.
$$
Hence, if \begin{equation} \label{eq:radius} \|\bpi - \bpi^\star\|_\infty\leq \frac{\min_k \pi_k^\star}{2K}, \end{equation} 
then we have
$$
\left| \frac{\sum_{k=1}^K (\pi_k^\star-\pi_k) \smooth\psi_k(\bx)}{\sum_{k=1}^K \pi_k^{\star} \smooth\psi_k(\bx)}\right|\leq 1/2.
$$
Since $|\ln(1+u)|\leq 2|u|$ for any $|u|\leq 1/2$, we have
$$ \sup_{\bpsi \in \Psi_K(\mathcal{X})} |\lossh(\bpi^\star,\bpsi)- \lossh(\bpi,\bpsi) |\leq 2\sum_{k=1}^K  \left|\frac{\pi_k-\pi_k^\star}{\pi_k^\star}\right|. $$ 
Since $\pi_k^\star>0$ there exists a positive constant $C$ such that 
$$  \sup_{\bpsi \in \Psi_K(\mathcal{X})} |\lossh(\bpi^\star,\bpsi)- \lossh(\bpi,\bpsi) |\leq C \|\bpi-\bpi^\star\|_1. $$ 
By definition of $\tpsipis$ given by \eqref{eq:tpsipi}, we have $$\lossh(\bpi^\star, \widehat{\bpsi}^{(h,n,\bpi^\star)})\geq  \lossh(\bpi^\star,\tpsipis).$$  Therefore,  from \eqref{eq:upper}, we have
\begin{multline*}
 \forall \bpi\in\mathcal{B}(\bpi^\star),\,\sum_{k=1}^K \sum_{j=1}^J \| \psi^\star_{k,j} - \widehat{\psi}^{(h,n,\bpi)}_{k,j}\|^2_1 \leq O_\mathbb{P}( n^{-1/2}h^{-1/4}+\|\bpi - \bpi^\star\|_1  )\\+ A( \lossh(\bpi^\star,\bpsi^\star) - \lossh(\bpi^\star,\tpsipis) ).
\end{multline*}
Noting that $\loss^{(0)}(\bpi^\star,\bpsi^\star)=0$, then from Lemma~\ref{lem:taylor}, we have %Using the Taylor development stated by \eqref{eq:Taylorsmooth} and the definition of $\lossh$, we obtain 
%$$
%\lossh(\bpi^\star,\bpsi^\star) = \int_{\mathcal{X}} g^\star(\bx) \ln \frac{1}{1+h^{2} \varepsilon(\bx) }\,d\bx
%$$
%where $\sup_{\bx}|\varepsilon_h(\bx)|$ is bounded. Hence, using a Taylor expansion of $\ln(1/(1+h^2\varepsilon(\bx))$ provides that
$$
\lossh(\bpi^\star,\bpsi^\star)=O(h^2).
$$
Using the definition of the Kullback-Leibler divergence, we have
$$
\lossh(\bpi^\star,\tpsipis) = \KL(g^\star,f^{(h)}_{\bpi^\star,\tpsipis}) + \sum_{k=1}^K \pi^\star_k \int_{\mathcal{X}} \smooth {\bpsi}_{k}^{(h,\bpi^\star)}(\bx) d\bx - \int_{\mathcal{X}} g^\star(\bx)d\bx.
$$
The Kullback-Leibler divergence is positive, $\int_{\mathcal{X}} g^\star(\bx)d\bx=1$ and 
$\int_{\mathcal{X}} \smooth {\bpsi}_{k}^{(h,\bpi^\star)}(\bx) d\bx= 1 +O(h^2)$ by Lemma~\ref{lem:taylor}. 
Hence, since by definition of $\tpsipis$ we have $$\lossh(\bpi^\star,\bpsi^\star) - \lossh(\bpi^\star,\tpsipis)\geq 0,$$ then, we have
$$
\lossh(\bpi^\star,\bpsi^\star) - \lossh(\bpi^\star,\tpsipis)= O(h^2).
$$
Hence, we have
$$
\sum_{k=1}^K \sum_{j=1}^J \| \psi^\star_{k,j} - \widehat{\psi}^{(h,n,\bpi)}_{k,j}\|^2_1 = O_\mathbb{P}( n^{-1/2}h^{-1/4}+h^2+\|\bpi - \bpi^\star\|_1).
$$
\end{proof}

\section{Control of the estimators of the finite dimensional parameters}
 
\begin{proof}[Proof of Proposition~\ref{thm:convparam}]
Recall that $\bt\in\mathcal{S}_K^r$, so its last element $t_K=1-\sum_{q=1}^{K-1} t_q$. Since  $\dot{\bmap}^{(h)}(\bt,\bpi,\bpsi)=(\dot{\bmap}^{(h)}_{1}(\bt,\bpi,\bpsi),\ldots,\dot{\bmap}^{(h)}_{K-1}(\bt,\bpi,\bpsi))$ is the gradient of $\bt \mapsto \bmap^{(h)} (\bt,\bpi,\bpsi)$   where    $\dot{\bmap}^{(h)}_k(\bt,\bpi,\bpsi)$ denotes the partial derivative of $ \bmap^{(h)}(\bt,\bpi,\bpsi)$ with respect to $t_k$, we have
$$
\dot{\bmap}^{(h)}_k(\bt,\bpi,\bpsi)=\frac{\frac{\partial}{\partial t_{k}} f^{(h)}_{\bt,\bpsi_{\bt}(\bpi,\bpsi)}}{f^{(h)}_{\bt,\bpsi_{\bt}(\bpi,\bpsi)} },   
$$
with
\begin{equation}\label{eq:df}
\frac{\partial}{\partial t_{k}} f^{(h)}_{\bt,\bpsi_{\bt}(\bpi,\bpsi)}=  \smooth \psi_{\bt, k}(\bpi,\bpsi)- \smooth \psi_{\bt, K}(\bpi,\bpsi) +\sum_{\ell=1}^K t_\ell    \frac{\partial}{\partial t_k} \smooth \psi_{\bt,\ell}(\bpi,\bpsi) ,
\end{equation}
and
   \begin{equation*}
   \frac{\partial}{\partial t_k} \smooth \psi_{\bt,\ell}(\bpi,\bpsi) =  \smooth \psi_{\bt,\ell}(\bpi,\bpsi)\phi^{(h)}_{\bt,\bpi,\bpsi,\ell,k}
    \end{equation*}
    where $\phi^{(h)}_{\bt,\bpi,\bpsi,\ell}$ is defined by 
    $$
    \phi^{(h)}_{\bt,\bpi,\bpsi,\ell,k}= \sum_{j=1}^J \left( \kernel_h \star \frac{\frac{\partial}{\partial t_k}   \psi_{\bt,\ell,j}(\bpi,\bpsi)}{ \psi_{\bt,\ell,j}(\bpi,\bpsi)}\right) .
    $$
    Hence, using the definition of the naive score function with smoothing, we have for $k=1,\ldots,K-1$
    \begin{equation}\label{eq:ldot} 
        \dot{\bmap}^{(h)}_k(\bt,\bpi,\bpsi) = \score_{\bt,\bpsi_{\bt}(\bpi,\bpsi),k}^{(h)}  - \sum_{\ell=1}^K t_{\ell} \score_{\bt,\bpsi_{\bt}(\bpi,\bpsi),\ell}^{(h)} \phi^{(h)}_{\bt,\bpi,\bpsi,\ell,k} .
    \end{equation}
    The mapping $\bt\mapsto \bmap^{(h)}(\bt,\bpi,\bpsi)$ admits second-order derivatives defined by
\begin{align*}
  \ddot{\bmap}_{k,\ell}^{(h)}(\bt,\bpi,\bpsi)(\bx) %&=  \frac{\partial}{\partial t_{\ell}} \dot{\bmap}^{(h)}_k(\bt,\bpi,\bpsi) \\
  %&=\frac{\partial}{\partial t_{\ell}} \frac{\frac{\partial}{\partial t_{k}} f^{(h)}_{\bt,\bpsi_{\bt}(\bpi,\bpsi)}}{f^{(h)}_{\bt,\bpsi_{\bt}(\bpi,\bpsi)} }  \\
  = \frac{\frac{\partial^2}{\partial t_{\ell} \partial t_{k}} f^{(h)}_{\bt,\bpsi_{\bt}(\bpi,\bpsi)}}{f^{(h)}_{\bt,\bpsi_{\bt}(\bpi,\bpsi)} }  -   \dot{\bmap}^{(h)}_k(\bt,\bpi,\bpsi) \dot{\bmap}^{(h)}_\ell(\bt,\bpi,\bpsi).
\end{align*}
We have
\begin{multline*}
\frac{\partial^2}{\partial t_{k'} \partial t_{k}} f^{(h)}_{\bt,\bpsi_{\bt}(\bpi,\bpsi)}=  \frac{\partial}{\partial t_{k'}}\left[\smooth \psi_{\bt, k}(\bpi,\bpsi) - \smooth \psi_{\bt, K}(\bpi,\bpsi) \right] \\+ \frac{\partial}{\partial t_{k}}\left[\smooth \psi_{\bt, k'}(\bpi,\bpsi) - \smooth \psi_{\bt, K}(\bpi,\bpsi) \right] +\sum_{\ell=1}^K t_\ell    \frac{\partial^2}{\partial t_{k'}\partial t_k} \smooth \psi_{\bt,\ell}(\bpi,\bpsi) ,    
\end{multline*}
and
$$
  \frac{\partial^2}{\partial t_{k'}\partial t_k} \smooth \psi_{\bt,\ell}(\bpi,\bpsi)=\smooth \psi_{\bt,\ell}(\bpi,\bpsi) \left(\phi^{(h)}_{\bt,\bpi,\bpsi,\ell,k}\phi^{(h)}_{\bt,\bpi,\bpsi,\ell,k'} - \lambda^{(h)}_{\bt,\bpi,\bpsi,\ell,k,k'}\right),
$$
with
$$
\lambda^{(h)}_{\bt,\bpi,\bpsi,\ell,k,k'}=  \sum_{j=1}^J \left( \kernel_h \star \left[\frac{\frac{\partial^2}{\partial t_{k'}\partial t_k}   \psi_{\bt,\ell,j}(\bpi,\bpsi)}{ \psi_{\bt,\ell,j}(\bpi,\bpsi)} - \frac{\frac{\partial}{\partial t_{k'}} \psi_{\bt,\ell,j}(\bpi,\bpsi)\frac{\partial}{\partial t_k}   \psi_{\bt,\ell,j}(\bpi,\bpsi)}{ \psi^2_{\bt,\ell,j}(\bpi,\bpsi)} \right]\right).
$$
Hence, we have  for $k=1,\ldots,K-1$ and  for $k'=1,\ldots,K-1$
\begin{multline} \label{eq:lddot}
\ddot{\bmap}_{k,\ell}^{(h)}(\bt,\bpi,\bpsi)(\bx) = \frac{ \smooth \psi_{\bt,k}(\bpi,\bpsi)\phi^{(h)}_{\bt,\bpi,\bpsi,k,k'} + \smooth \psi_{\bt,k'}(\bpi,\bpsi)\phi^{(h)}_{\bt,\bpi,\bpsi,k',k}}{ f^{(h)}_{\bt,\bpsi_{\bt}(\bpi,\bpsi)}} \\
-\frac{  \smooth \psi_{\bt,K}(\bpi,\bpsi)\left(\phi^{(h)}_{\bt,\bpi,\bpsi,K,k'}+\phi^{(h)}_{\bt,\bpi,\bpsi,K,k}\right)}{ f^{(h)}_{\bt,\bpsi_{\bt}(\bpi,\bpsi)}} \\+ 
 \sum_{\ell=1}^K    t_\ell \score_{\bt,\bpsi_{\bt}(\bpi,\bpsi),\ell}^{(h)} \left(\phi^{(h)}_{\bt,\bpi,\bpsi,\ell,k}\phi^{(h)}_{\bt,\bpi,\bpsi,\ell,k'} - \lambda^{(h)}_{\bt,\bpi,\bpsi,\ell,k,k'}\right)-   \dot{\bmap}^{(h)}_k(\bt,\bpi,\bpsi) \dot{\bmap}^{(h)}_\ell(\bt,\bpi,\bpsi).
\end{multline}

By continuity of $(\bt,\bpi,\bpsi)\mapsto\psi_{\bt,\ell,j}(\bpi,\bpsi)$ in a neighborhood of $V$ and by the continuity of $h\mapsto \kernel_h$, the mapping $(\bt,\bpi,\bpsi,h)\mapsto \smooth \psi_{\bt,\ell}(\bpi,\bpsi)$ is a continuous function of $(\bt,\bpsi,\bpsi)$ in $\tilde V=\{(\bt,\bpi,\bpsi,h): (\bt,\bpi,\bpsi)\in V, h>0\}$. In addition, since first and second order partial derivatives of $\bt\mapsto\psi_{\bt,\ell,j}(\bpi,\bpsi)$ are continuous functions of $(\bt,\bpi,\bpsi)$ in $ V$ due to our assumptions, we have that  $(\bt,\bpi,\bpsi,h)\mapsto \frac{\partial}{\partial t_k} \smooth \psi_{\bt,\ell}(\bpi,\bpsi) $ and $(\bt,\bpi,\bpsi,h)\mapsto   \frac{\partial^2}{\partial t_{k'}\partial t_k} \smooth \psi_{\bt,\ell}(\bpi,\bpsi)$ are continuous functions in $\tilde V$. This implies that  $(\bt,\bpi,\bpsi,h)\mapsto \dot\bmap^{(h)}(\bt,\bpi,\bpsi)$ and $(\bt,\bpi,\bpsi,h)\mapsto \ddot\bmap^{(h)}(\bt,\bpi,\bpsi)$ are continuous functions of $(\bt,\bpi,\bpsi,h)$  in $\tilde V$, for any $\bx$. %\color{black} {\bf I have two questions here. First, I'm not sure where the continuity in $\bpi$ and $\bpsi$ is coming from. Shouldn't this be a property of $\bpsi_{t}(\bpi,\bpsi)$? Please note that $\bpsi_{t}(\bpi,\bpsi)$ is an abstract object here - we only define a specific $\bpsi_{t}(\bpi,\bpsi)$ later when proving Theorem 3. For that specific $\bpsi_{t}(\bpi,\bpsi)$ the continuity in $\bt$ and $\bpi$ seems fine as long as all of the $\pi_{k}>0$ by our assumption. For a general $\bpsi_{t}(\bpi,\bpsi)$, however, we know nothing. On the other hand, it seems that if the Condition \ref{C3} is satisfied, no continuity properties are needed at all. What am I missing here?!} 
Therefore,  as $(\bt,\bpi,\bpsi)$ tends to $(\bpi^\star,\bpi^\star,\bpsi^\star)$ and $h$ tends to zero from the right, we have $\dot\bmap^{(h)}(\bt,\bpi,\bpsi)$ converges point-wise  to $\beffscores$ by Condition \ref{C3}.  Since $\sup_{\bt,\bpi,\bpsi} |\bpsi_{\bt}(\bpi,\bpsi)|$ and $\sup_{\bt,\bpi,\bpsi} \left| \frac{\frac{\partial}{\partial t_k}   \psi_{\bt,\ell,j}(\bpi,\bpsi)}{ \psi_{\bt,\ell,j}(\bpi,\bpsi)}\right|$ are bounded by square integrable functions uniformly on $h$, the function $\dot\bmap^{(h)}_k(\bt,\bpi,\bpsi)$ is dominated by a square integrable function uniformly on $h$. %\color{black} {\bf Again, why is this existence of a dominating square integrable function  true for a general $\bpsi_{t}(\bpi,\bpsi)$? It is probably true for a specific example defined \eqref{eq:mapingpsi}; the general case leaves me, however, puzzled.} 
Therefore, the dominated convergence theorem implies that for every $(\tilde\bt^{(n)},\tilde\bpi^{(n)},\tilde\bpsi^{(n)})$ that converges in probability to $(\bpi^\star,\bpi^\star,\bpsi^\star)$  and $h$ that tends to zero from the right as $n$ tends to infinity, we have 
$$
\Eg\left[\left\|\dot\bmap^{(h)} (\tilde\bt^{(n)},\tilde\bpi^{(n)},\tilde\bpsi^{(n)})(\bX_1) - \beffscores(\bX_1) \right\|_2^2\right] = o_\mathbb{P}(1).
$$
Combining this result with  the Donsker property of the class of functions $$\mathcal{D}_{n,r}=\{n^{r-1/2}\dot\bmap^{(h)}(\bt,\bpi,\bpsi): (\bt,\bpi,\bpsi)\in V\},$$ with $1/4<r\leq 1/2$ implies that
$$
\left\|     \Gn \dot\bmap^{(h)}(\tilde\bt^{(n)},\tilde\bpi^{(n)},\tilde\bpsi^{(n)}) -        \Gn\beffscores\right\|_2 = o_\mathbb{P}(  n^{1/2-r} )
$$
and thus, we have
\begin{equation} \label{eq:controldot}
\left\|    \frac{1}{\sqrt{n}} \Gn \dot\bmap^{(h)}(\tilde\bt^{(n)},\tilde\bpi^{(n)},\tilde\bpsi^{(n)}) -       \frac{1}{\sqrt{n}}  \Gn\beffscores\right\|_2 = o_\mathbb{P}( n^{-r}).
\end{equation}
Since $\sup_{\bt,\bpi,\bpsi} \left| \frac{\frac{\partial^2}{\partial t_{k'}\partial t_k}   \psi_{\bt,\ell,j}(\bpi,\bpsi)}{ \psi_{\bt,\ell,j}(\bpi,\bpsi)}\right|$ is bounded by an integrable function uniformly on $h$, the function $\ddot\bmap^{(h)}_k(\bt,\bpi,\bpsi)$ is dominated by an integrable function leading that
$$
\Eg \left[ \ddot\bmap^{(h)}_{k\ell}(\bpi^\star,\bpi^\star,\bpsi^\star)(\bX_1)\right] = - \Eg \left[  \dot\bmap^{(h)}_{k}(\bpi^\star,\bpi^\star,\bpsi^\star)(\bX_1)\dot\bmap^{(h)}_{\ell}(\bpi^\star,\bpi^\star,\bpsi^\star)(\bX_1)\right].
$$
Hence, using the definition of $\befffishers$ given by \eqref{eq:fisher},   the  dominated convergence theorem  states that  for every $(\tilde\bt^{(n)},\tilde\bpi^{(n)},\tilde\bpsi^{(n)})$ that converges in probability to $(\bpi^\star,\bpi^\star,\bpsi^\star)$  and $h$ that tends to 0 as $n$ tends to infinity, we have 
$$
\left\|\Eg\left[ \ddot\bmap^{(h)}(\tilde\bt^{(n)},\tilde\bpi^{(n)},\tilde\bpsi^{(n)})(\bX_1)\right] + \befffishers\right\|=o_\mathbb{P}(1).
$$
Combining this result with the fact that the class of functions $\{\ddot\bmap^{(h)}(\bt,\bpi,\bpsi): (\bt,\bpi,\bpsi)\in V\}$ is $g^\star$-Glivenko-Cantelli and is bounded in $L_1(g^\star)$, implies that  for every $(\tilde\bt^{(n)},\tilde\bpi^{(n)},\tilde\bpsi^{(n)})$ that converges in probability to $(\bpi^\star,\bpi^\star,\bpsi^\star)$  and $h$ that tends to 0 as $n$ tends to infinity, we have  
\begin{equation} \label{eq:controlddot}
\left\|\Pn\ddot\bmap^{(h)}(\tilde\bt^{(n)},\tilde\bpi^{(n)},\tilde\bpsi^{(n)}) + \befffishers\right\| = o_\mathbb{P}(1).
\end{equation}
The profiling of the loss function  implies that
$$
 \profilenh(\bpi^\star) -  \profilenh(\tilde\bpi^{(n)})  =  \lossnh(\bpi^\star,\widehat{\bpsi}^{(h,n,\bpi^\star)}) -  \lossnh(\tilde\bpi^{(n)},\widehat{\bpsi}^{(h,n,\tilde\bpi^{(n)})}) .
$$
Hence, since by Condition \ref{C2}, we have $$ \widehat{\bpsi}^{(h,n,\bpi^\star)} =\bpsi_{\bpi^\star}(\bpi^\star, \widehat{\bpsi}^{(h,n,\bpi^\star)})$$ and  $$ \widehat{\bpsi}^{(h,n,\tilde\bpi^{(n)})} =\bpsi_{\tilde\bpi^{(n)}}(\tilde\bpi^{(n)}, \widehat{\bpsi}^{(h,n,\tilde\bpi^{(n)})}),$$ then using the fact that $\hpsipi$ is a global minimizer of $\lossnh(\bpi,\bpsi)$ with respect to $\bpsi$ (see \eqref{eq:hpsipi}), we have
\begin{equation}\label{eq:encadrement}
\Pn \ln \frac{f^{(h)}_{\tilde\bpi^{(n)}, \bpsi_{\tilde\bpi^{(n)}}(\bpi^\star, \widehat{\bpsi}^{(h,n,\bpi^\star)}) } }{f^{(h)}_{\bpi^\star, \bpsi_{\bpi^\star}(\bpi^\star, \widehat{\bpsi}^{(h,n,\bpi^\star)}) } }\leq  \profilenh(\bpi^\star) -  \profilenh(\tilde\bpi^{(n)})  \leq 
\Pn  \ln \frac{f^{(h)}_{\tilde\bpi^{(n)}, \bpsi_{\tilde\bpi^{(n)}}(\tilde\bpi^{(n)}, \widehat{\bpsi}^{(h,n,\tilde\bpi^{(n)})}) } }{f^{(h)}_{\bpi^\star, \bpsi_{\bpi^\star}(\tilde\bpi^{(n)}, \widehat{\bpsi}^{(h,n,\tilde\bpi^{(n)})}) } }.
\end{equation}
To control the lower and upper bound, we use a Taylor expansion of order two of $\bmap^{(h)}(\bt,\bpi,\bpsi)(\bx)$ with respect to its first argument. Hence, for any sequence $(\bar\bpi^{(n)},\bar\bpsi^{(n)})$ that converges in probability to $(\bpi^\star,\bpsi^\star)$, there exists $\tilde\bt^{(n)}=(\tilde t_1^{(n)},\ldots,\tilde t_K^{(n)})$ with $|t_k^{(n)} - \pi_k^\star|\leq |\tilde\pi_k-\pi_k^\star|$ where
\begin{multline} \label{eq:diffmaping}
\Pn \bmap^{(h)}(\tilde\bpi^{(n)},\bar\bpi^{(n)},\bar\bpsi^{(n)}) 
  - \Pn\bmap^{(h)}(\bpi^\star,\bar\bpi^{(n)},\bar\bpsi^{(n)})  = 
 (\tilde\bpi^{(n)}-\bpi^\star)^\top \Pn \dot\bmap^{(h)}(\bpi^\star,\bar\bpi^{(n)},\bar\bpsi^{(n)})  \\ +
 \frac{1}{2}  (\tilde\bpi^{(n)}-\bpi^\star)^\top \left[ \Pn \ddot\bmap^{(h)}(\tilde\bt^{(n)},\bar\bpi^{(n)},\bar\bpsi^{(n)})  \right]  (\tilde\bpi^{(n)}-\bpi^\star).
\end{multline}
To control the first term on the right-hand side of the previous equation, we note that
\begin{align*}
  \Pn \dot\bmap^{(h)}(\bpi^\star,\bar\bpi^{(n)},\bar\bpsi^{(n)})   =
      \frac{1}{\sqrt{n}} \Gn \dot\bmap^{(h)}(\bpi^\star,\bar\bpi^{(n)},\bar\bpsi^{(n)}) +  \Eg[\dot\bmap^{(h)}(\bpi^\star,\bar\bpi^{(n)},\bar\bpsi^{(n)})(\bX)].
\end{align*}
Using \eqref{eq:controldot}, we have

\begin{multline*}
    \frac{1}{\sqrt{n}} (\tilde\bpi^{(n)}-\bpi^\star)^\top \Gn \dot\bmap^{(h)}(\bpi^\star,\bar\bpi^{(n)},\bar\bpsi^{(n)})=   \frac{1}{\sqrt{n}} (\tilde\bpi^{(n)}-\bpi^\star)^\top \Gn\beffscores \\  +   (\tilde\bpi^{(n)}-\bpi^\star) o_\mathbb{P}(n^{-r}).
\end{multline*}
The small-bias condition of Condition \ref{C4} implies that
$$
(\tilde\bpi^{(n)}-\bpi^\star)^\top\Eg[\dot\bmap^{(h)}(\bpi^\star,\bar\bpi^{(n)},\bar\bpsi^{(n)})(\bX)] =  (\tilde\bpi^{(n)}-\bpi^\star)^\top o_\mathbb{P}(\|\tilde\bpi^{(n)} - \bpi^\star\| +  n^{-r}  ).
$$
Hence, the first term on the right-hand side of the \eqref{eq:diffmaping} can be controlled by
\begin{multline*}
 (\tilde\bpi^{(n)}-\bpi^\star)^\top \Pn \dot\bmap^{(h)}(\bpi^\star,\bar\bpi^{(n)},\bar\bpsi^{(n)}) = \frac{1}{\sqrt{n}} (\tilde\bpi^{(n)}-\bpi^\star)^\top \Gn\beffscores\\ +   o_\mathbb{P}(\|\tilde\bpi^{(n)} - \bpi^\star\|^2 +\|\tilde\bpi^{(n)}-\bpi^\star\|  n^{-r}  ).   
\end{multline*}
 To control the second term in the right-hand side of \eqref{eq:diffmaping}, we use the fact that since $\tilde\bpi^{(n)}$ converges in probability to $\bpi^\star$ we have that $\tilde\bt^{(n)}$ converges in probability to $\bpi^\star$. Hence, using \eqref{eq:controlddot}, we have
\begin{multline*}
(\tilde\bpi^{(n)}-\bpi^\star)^\top \left[ \Pn \ddot\bmap^{(h)}(\tilde\bt^{(n)},\bar\bpi^{(n)},\bar\bpsi^{(n)})  \right]  (\tilde\bpi^{(n)}-\bpi^\star)=-(\tilde\bpi^{(n)}-\bpi^\star)^\top \befffishers  (\tilde\bpi^{(n)}-\bpi^\star) \\+ o_\mathbb{P}(\|\tilde\bpi^{(n)}-\bpi^\star\|^2).
\end{multline*}
Noting that $\Eg [\beffscores (\bX_1)]=\boldsymbol{0}_K$, we have $n^{-1/2}\Gn\beffscores=\Pn\beffscores$.   Therefore,   for any sequence $(\bar\bpi^{(n)},\bar\bpsi^{(n)})$ that converges in probability to $(\bpi^\star,\bpsi^\star)$, we have
\begin{multline*}
\Pn \bmap^{(h)}(\tilde\bpi^{(n)},\bar\bpi^{(n)},\bar\bpsi^{(n)}) 
  - \Pn\bmap^{(h)}(\bpi^\star,\bar\bpi^{(n)},\bar\bpsi^{(n)}) = \\
(\tilde\bpi^{(n)}-\bpi^\star)^\top  \Pn \beffscores   - \frac{1}{2} (\tilde\bpi^{(n)}-\bpi^\star)^\top \befffishers  (\tilde\bpi^{(n)}-\bpi^\star) +  o_\mathbb{P}([\|\tilde\bpi^{(n)} - \bpi^\star\| +  n^{-r}  ]^2) .
\end{multline*}
The bounds of \eqref{eq:encadrement} can be defined as $ \Pn \bmap^{(h)}(\tilde\bpi^{(n)},\bar\bpi^{(n)},\bar\bpsi^{(n)}) 
  -  \Pn\bmap^{(h)}(\bpi^\star,\bar\bpi^{(n)},\bar\bpsi^{(n)})  $, with $(\bar\bpi^{(n)},\bar\bpsi^{(n)})=(\bpi^\star,\widehat{\bpsi}^{(h,n,\bpi^\star)})$ for the lower bound and $(\bar\bpi^{(n)},\bar\bpsi^{(n)})=(\tilde\bpi^{(n)},\widehat{\bpsi}^{(h,n,\tilde\bpi^{(n)})})$ for the upper bound. Therefore, we have
  \begin{multline*}
  \profilenh(\bpi^\star) -  \profilenh(\tilde\bpi^{(n)}) =   (\tilde\bpi^{(n)}-\bpi^\star)^\top \Pn  \beffscores  - \frac{1}{2}  (\tilde\bpi^{(n)}-\bpi^\star)^\top \befffishers  (\tilde\bpi^{(n)}-\bpi^\star) \\+  o_\mathbb{P}([\|\tilde\bpi^{(n)} - \bpi^\star\| +  n^{-r}  ]^2) .      
  \end{multline*}
\end{proof}

\begin{lemma}\label{lem:control_sup_normdev}
There exists a constant $C>0$ such that if $h$ is small enough we have
$$
\max_{k,j} \left\| \frac{\check{\psi}^{(h)}_{k,j} - \psi_{k,j}^\star}{\psi_{k,j}^\star} \right\|_{\infty} \leq C.
$$
\end{lemma}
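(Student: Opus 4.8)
The plan is to reduce the claimed sup-norm bound to two elementary ingredients: a uniform \emph{positive lower bound} on the true densities $\psi_{k,j}^\star$, and a uniform \emph{upper bound} on the least favorable direction $\check\psi_{k,j}^{(h)}$. First I would record that each $\psi_{k,j}^\star$ is bounded away from zero on $\mathcal{X}_j$, uniformly in $(k,j)$: membership in $\Psi(\mathcal{X}_j)$ forces $\|(\ln\psi_{k,j}^\star)''\|_{L^\infty}\le C_3$ and $\|\ln\psi_{k,j}^\star\|_{L_2}\le C_2$, so by the one-dimensional Sobolev embedding $\ln\psi_{k,j}^\star$ is continuous with $\|\ln\psi_{k,j}^\star\|_\infty$ controlled by $C_2$, $C_3$ and the length of the compact interval $\mathcal{X}_j$. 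Consequently $\psi_{k,j}^\star=\exp(\ln\psi_{k,j}^\star)$ attains a strictly positive minimum on the compact set; this is precisely the fact already invoked in the proof of Lemma~\ref{lem:invert}. Setting $c_0:=\min_{k,j}\inf_{x\in\mathcal{X}_j}\psi_{k,j}^\star(x)>0$, the denominators appearing in the statement are uniformly bounded below.

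Next I would control $\check\psi_{k,j}^{(h)}$ from above. Since $\check\bpsi^{(h)}$ is, by its defining projection \eqref{eq:condproj}, taken in $\Psi_K(\mathcal{X})$, each coordinate satisfies $0<\check\psi_{k,j}^{(h)}\le C_1$, and likewise $\psi_{k,j}^\star\le C_1$. Combining the two bounds gives, pointwise on $\mathcal{X}_j$,
$$
\left|\frac{\check\psi_{k,j}^{(h)}-\psi_{k,j}^\star}{\psi_{k,j}^\star}\right|\le\frac{\check\psi_{k,j}^{(h)}+\psi_{k,j}^\star}{\psi_{k,j}^\star}\le\frac{2C_1}{c_0}=:C,
$$
which is the desired estimate, with a constant $C$ independent of $h$. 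The hypothesis ``$h$ small enough'' would then be used only to guarantee that the object $\check\psi^{(h)}$ is well defined and well behaved: the $L_2(g^\star)$-projection of the (bounded) naive score $\score^{(h)}_{\bpi^\star,\bpsi^\star,k}$ onto the smoothed tangent cone is attained over the sequentially compact set $\Psi_K(\mathcal{X})$ (Arzel\`a--Ascoli, exactly as in the proof of Theorem~\ref{thm:consistence}), and the associated normal equations are uniquely solvable once the smoothed efficient information is invertible, which holds for small $h$ by continuity from the invertibility of $\befffishers$ established in Lemma~\ref{lem:invert}.

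The step I expect to be the main obstacle is justifying that the least favorable direction is genuinely representable by an element of $\Psi_K(\mathcal{X})$ uniformly in $h$, rather than by an abstract element of the closed linear span of the tangent cone that could degenerate as $h\to0$. To handle this cleanly I would show that $\check\psi^{(h)}$ converges in sup-norm, as $h\to 0$, to the unsmoothed least favorable direction: using the continuity of the smoothing operators $\smooth_j$ and of the weights $\omega_{\bpi^\star,\bpsi^\star,k}^{(h)}$ in $h$, together with the invertibility from Lemma~\ref{lem:invert} to pass the projection to the limit, the family $\{\check\psi^{(h)}\}$ is relatively compact and its limit lies in $\Psi_K(\mathcal{X})$. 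This secures the uniform upper bound $C_1$ on $\check\psi^{(h)}_{k,j}$, and hence the constant $C$ above, for all sufficiently small $h$, while the lower bound $c_0$ on $\psi^\star_{k,j}$ is fixed once and for all by the regularity of $\Psi(\mathcal{X}_j)$ and the compactness of $\mathcal{X}_j$.
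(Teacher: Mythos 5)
Your proof is correct for the lemma as stated, but it takes a genuinely different — and considerably more elementary — route than the paper. You obtain the bound directly: since $\check\bpsi^{(h)}$ is by definition (see \eqref{eq:effscore}--\eqref{eq:condproj}) an element of $\Psi_K(\mathcal{X})$, each $\check\psi^{(h)}_{k,j}$ is bounded by $C_1$, while positivity, continuity (from $\|(\ln\psi^\star_{k,j})''\|_{L^\infty}\le C_3$) and compactness of $\mathcal{X}_j$ give $\psi^\star_{k,j}\ge c_0>0$, so the ratio is at most $2C_1/c_0$ for every $h$. The paper argues indirectly: the Pythagorean identity for the $L_2(g^\star)$-projection, combined with the uniform boundedness of the naive score $\score^{(h)}_{\bpi^\star,\bpsi^\star,k}$, yields a bound on $\|\noisy^{(h)}_{\bpi^\star,\bpsi^\star}[\check\bpsi^{(h)}-\bpsi^\star]\|_{L^2(g^\star)}$ that is uniform in $h$ (their \eqref{eq:boundA}); this norm is then expanded as a quadratic form in the convolved ratios $\kernel_h\star\bigl[(\check\psi^{(h)}_{k,j}-\psi^\star_{k,j})/\psi^\star_{k,j}\bigr]$, whose terms are bounded below (the ratios are $\ge-1$ and the conditional weights $\bLambda^{(h)}_{j,j'}$ lie in $[0,1]$), so an unbounded ratio would force this norm to blow up as $h\to0$, a contradiction. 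What each approach buys: yours is shorter, gives an explicit constant, and requires no smallness of $h$; but it hinges on the pointwise lower bound $c_0$, i.e.\ precisely on the compactness of $\mathcal{X}_j$. The paper's information-theoretic argument is structured to avoid that lower bound, which is why the authors can carry it over (together with Assumption~\ref{ass:poly}) to the real-line extension of Theorem~\ref{thm:normabis}, where your $c_0$ no longer exists. Finally, note that both proofs take for granted that the projection \eqref{eq:condproj} is attained by some $\check\bpsi^{(h)}\in\Psi_K(\mathcal{X})$ — the paper's own proof invokes this membership exactly as you do — so your third paragraph, while prudent, addresses a gap in the common setup rather than a point the paper actually resolves.
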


\begin{proof}[Proof of Lemma~\ref{lem:control_sup_normdev}]
 Using the definition of $\beffscore_{\bpi^\star,\bpsi^\star}^{(h)}$ given by \eqref{eq:effscore}, and since the projection is $g^\star$-orthogonal, we have
$$
\| \score_{\bpi^\star,\bpsi^\star}^{(h)}    \|_{L^2(g^\star)}^2=\|\beffscore_{\bpi^\star,\bpsi^\star}^{(h)}  \|_{L^2(g^\star)}^2 + \| \boldsymbol{1}_{K-1}\noisy_{\bpi^\star,\bpsi^\star}^{(h)}[\check\bpsi^{(h)} - \bpsi^\star]    \|_{L^2(g^\star)}^2 ,
$$
where $\check\bpsi^{(h)}$ is defined by
$$
\| \score_{\bpi^\star,\bpsi^\star}^{(h)} - \boldsymbol{1}_{K-1}\noisy_{\bpi^\star,\bpsi^\star}^{(h)}[\check\bpsi^{(h)} - \bpsi^\star]    \|_{L^2(g^\star)}^2 = \min_{\bpsi\in\Psi_K(\mathcal{X})} \|  \score_{\bpi^\star,\bpsi^\star}^{(h)} - \boldsymbol{1}_{K-1}\noisy_{\bpi^\star,\bpsi^\star}^{(h)}[\bpsi - \bpsi^\star] \|_{L^2(g^\star)}^2,
$$
 $\boldsymbol{1}_{K-1}$ begin the vector composed of $K-1$ ones.
Since $0\leq \score_{\bpi^\star,\bpsi^\star,k}^{(h)}(\bx)\leq 1/\pi_k^\star$ there exits a positive constant $C$ such that
$$
\sup_{h>0} \| \score_{\bpi^\star,\bpsi^\star}^{(h)}    \|_{L^2(g^\star)}^2 \leq C.
$$
Hence, the first equation implies that
\begin{equation} \label{eq:boundA}
\| \noisy_{\bpi^\star,\bpsi^\star}^{(h)}[\check\bpsi^{(h)} - \bpsi^\star]    \|_{L^2(g^\star)}^2 \leq  C.    
\end{equation}
%Noting that $\smooth_j \psi_{k,j}^\star \in\Psi(\mathcal{X}_j)$, the smoothed version of the true infinite dimensional parameter $\smooth \bpsi^\star =(\smooth \bpsi_1^\star,\ldots,\smooth \bpsi_K^\star)$ is a valid candidate, so
%$$
%\| \score_{\bpi^\star,\bpsi^\star}^{(h)} - \boldsymbol{1}_{K-1}\noisy_{\bpi^\star,\bpsi^\star}^{(h)}[\check\bpsi^{(h)} - \bpsi^\star]    \|_{L^2(g^\star)}^2 \leq  \|  \score_{\bpi^\star,\bpsi^\star}^{(h)} - \boldsymbol{1}_{K-1}\noisy_{\bpi^\star,\bpsi^\star}^{(h)}[\smooth \bpsi^\star- \bpsi^\star] \|_{L^2(g^\star)}^2.$$
For any $\bpsi\in\Psi_K(\mathcal{X})$, defined $\bnu_{\bpsi,j}^{(h)}=(\nu_{\bpsi,j,1}^{(h)},\ldots,\nu_{\bpsi,j,K}^{(h)})$ the $K$ dimensional vector with
$$
\nu_{\bpsi,j,k}^{(h)}(u)= \left[\kernel_h \star \frac{\psi_{k,j} - \psi_{k,j}^\star}{\psi_{k,j}^\star}\right](u).
$$
%{\bf I think that the notation here is very difficult to follow. Earlier, we introduced the notation \begin{equation}
%\zeta_{\bpsi,\bar\bpsi,k}^{(h)}(\bx)=\sum_{j=1}^J \left[\kernel_h \star \frac{\bar\psi_{k,j}-\psi_{k,j}}{\psi_{k,j}}\right](x_j) . 
%\end{equation}
%Can't we reuse it here?
%}
For any $\bpsi\in\Psi(\mathcal{X})$, we have
$$
  \|\noisy_{\bpi^\star,\bpsi^\star}^{(h)}[\bpsi - \bpsi^\star]    \|_{L^2(g^\star)}^2=\sum_{k,k',j,j'}\int_\mathcal{X}\omega_{\bpi^\star,\bpsi^\star,k}^{(h)}(\bx)\omega_{\bpi^\star,\bpsi^\star,k'}^{(h)}(\bx)\nu_{\bpsi,j,k}^{(h)}(x_j)\nu_{\bpsi,j',k'}^{(h)}(x_{j'}) g^\star(\bx)d\bx.
$$
Let $ g^\star_{j,j'}(x_j,x_{j'})$ denote the marginal density of $(X_j,X_{j'})$ defined as the integral of $g^\star$ over all the components of $\bX$ but components $j$ and $j'$ and $\Lambda_{j,j',k,k'}^{(h)}$ be defined by
with
$$
\Lambda_{j,j',k,k'}^{(h)}(x_j,x_{j'})=  \Eg\left[\omega_{\bpi^\star,\bpsi^\star,k}^{(h)}(\bX)\omega_{\bpi^\star,\bpsi^\star,k'}^{(h)}(\bX)  \mid X_j=x_j, X_{j'}=x_{j'}\right]
$$
Hence, we have
 for any $\bpsi\in\Psi_K(\mathcal{X})$
$$
\|\noisy_{\bpi^\star,\bpsi^\star}^{(h)}[\bpsi - \bpsi^\star]    \|_{L^2(g^\star)}^2= \sum_{j=1}^J \sum_{j'=1}^J \sum_{k=1}^K \sum_{k'=1}^K \bG_{\bpsi,j,j,k,k'}
$$
where
$$
\bG_{\bpsi,j,j,k,k'}=\int_{\mathcal{X}_j \times\mathcal{X}_{j'}} \nu_{\bpsi,j,k}^{(h)}(x_j) \bLambda_{j,j',k,k'}^{(h)}(x_j,x_{j'})\nu_{\bpsi,j',k}^{(h)}(x_{j'}) g^\star_{j,j'}(x_j,x_{j'}) dx_j dx_{j'}.
$$
Note that, for any $(j,j')$ the elements of $\bLambda_{j,j'}^{(h)}$ are positive and bounded from above by $1$ since the weights $\omega_{\bpi^\star,\bpsi^\star,k}^{(h)}(\bX)$ are between 0 and 1. In addition,  we have
$$
\inf_{\bpsi\in\Psi(\mathcal{X}_j)}\min_{k,j} \nu_{\bpsi,k,j}^{(h)}(u) \geq -1,
$$
leading that there exists a positive constant $C$ such that for any 
$$
\inf_{\bpsi\in\Psi(\mathcal{X}_j)}\min_{k,k',j,j'} \bG_{\bpsi,j,j,k,k'} \geq -C.
$$
Now suppose that, there exists at least one couple $(k,j)$ such that $(\check{\psi}^{(h)}_{k,j} - \psi_{k,j}^\star)/\psi_{k,j}^\star$ is not bounded when $h$ is small enough. This means that $(\check{\psi}^{(h)}_{k,j} - \psi_{k,j}^\star) / \psi_{k,j}^\star$ is not upper-bounded since this function is always lower-bounded by -1. Since $\check\psi_{k,j}^{(h)} \in \Psi(\mathcal{X}_j)$ this also implies that $\nu_{\bpsi,j,k}^{(h)}$ is not bounded when $h$ is small enough leading that $\limsup_{h\to 0} \bG_{\bpsi,j,j,k,k'} = \infty$ and hence 
$\limsup_{h\to0} \|\noisy_{\bpi^\star,\bpsi^\star}^{(h)}[\check\bpsi^{(h)} - \bpsi^\star]    \|_{L^2(g^\star)}^2=\infty$ which is in contradiction with \eqref{eq:boundA}.
\end{proof}

\begin{proof}[Proof of Theorem~\ref{thm:norma}]
   Let $\bt$ be the vector defined on the restricted simplex $\mathcal{S}_K^r$. This means that its last element $t_K=1-\sum_{q=1}^{K-1}t_q$. For any $(\bpi,\bpsi)$, consider the map $\bt\mapsto \bpsi_{\bt}(\bpi,\bpsi)$ with  $\bpsi_{\bt}(\bpi,\bpsi)=(\psi_{\bt,1}(\bpi,\bpsi),\ldots,\bpsi_{\bt,K}(\bpi,\bpsi))$ and  $\bpsi_{\bt,k}(\bpi,\bpsi)=(\psi_{\bt,k,1}(\bpi,\bpsi),\ldots,\psi_{\bt,k,K}(\bpi,\bpsi))$ where each $\psi_{\bt,k,j}(\bpi,\bpsi)$ is a univariate density defined as
    \begin{equation}\label{eq:mapingpsi}
    \psi_{\bt,k,j}(\bpi,\bpsi) =\frac{1}{Z_{\bt,\bpi,\bpsi,k,j}}\psi_{k,j} \exp\left( (t_K - \pi_K) \frac{\check\psi_{k,j}^{(h)} -\psi_{k,j}}{\psi_{k,j}}\right),
    \end{equation}
    where $\psi_{k,j} \in \Psi(\mathcal{X}_j)$, $\check{\psi}_{k,j}^{(h)}\in \Psi(\mathcal{X}_j)$ and is defined by \eqref{eq:condproj}, and $Z_{\bt,\bpi,\bpsi,k,j}$ is the normalization constant ensuring that $\psi_{\bt,k,j}(\bpi,\psi)$ integrates to one where
    $$
    Z_{\bt,\bpi,\bpsi,k,j} = \int_{\mathcal{X}_j}\psi_{k,j}(u) \exp\left( (t_K - \pi_K) \frac{\check\psi_{k,j}^{(h)}(u) -\psi_{k,j}(u)}{\psi_{k,j}(u)}\right)du.
    $$
    Noting that if $\psi_{k,j}$ is in a neighborhood of $\psi_{k,j}^\star$ in the sense of Lemma ~\ref{lem:control_sup_normdev}, 
    $
    \left\|\frac{\check\psi_{k,j}^{(h)} -\psi_{k,j}}{\psi_{k,j}}\right\|_\infty     $ is finite  and thus, noting that by construction $   \psi_{\bt,k,j}(\bpi,\bpsi)\geq 0$, $   \psi_{\bt,k,j}(\bpi,\bpsi)$ is a density function. %In addition, we have
%    $$
%    \|\psi_{\bt,k,j}(\bpi,\bpsi)\|_{\infty} \leq \left(\|\psi_{k,j}\|_{\infty} + \exp\left( (t_K - \pi_K)\left(\frac{1}{2}\left\|\frac{\check\psi_{k,j}^{(h)} -\psi_{k,j}^\star}{\psi_{k,j}^\star}\right\|_\infty +    1)\right\|_{L_2}\right) \right)/ Z_{\bt,\bpi,\bpsi,k,j}
%    $$
%    and
%    $$
%    \|\ln \psi_{\bt,k,j}(\bpi,\bpsi)\|_{L_2} \leq \|\ln \psi_{k,j}\|_{L_2} + \left\|\frac{\check\psi_{k,j}^{(h)} -\psi_{k,j}}{\psi_{k,j}}\right\|_{L_2} - \ln Z_{\bt,\bpi,\bpsi,k,j}.
%    $$
    Hence, it can be checked that if $(\bt,\bpi,\bpsi)$ is in a neighborhood of $(\bpi^\star,\bpi^\star,\bpsi^\star)$ denoted by $V$ then $\psi_{\bt,k,j}\in \Psi(\mathcal{X}_j)$.
    For any $k=1,\ldots, K-1$, we have
$$
\frac{\partial}{\partial t_\ell}   \psi_{\bt,k,j}(\bpi,\bpsi) = \psi_{\bt,k,j}(\bpi,\bpsi) \left[ - \frac{\check\psi_{k,j}^{(h)} -\psi_{k,j}}{\psi_{k,j}}-     \frac{    Z_{\bt,\bpi,\bpsi,k,j}'}{    Z_{\bt,\bpi,\bpsi,k,j}}\right],
$$
with 
$$
   Z_{\bt,\bpi,\bpsi,k,j}'=-\int_{\mathcal{X}_j}(\check\psi_{k,j}^{(h)}(u) -\psi_{k,j}(u))\exp\left( (t_K - \pi_K) \frac{\check\psi_{k,j}^{(h)}(u) -\psi_{k,j}(u)}{\psi_{k,j}(u)}\right)du.
$$
Since the derivative of $\frac{\partial}{\partial t_\ell}   \psi_{\bt,k,j}(\bpi,\bpsi)$ is now known, we can define
$$
    \phi^{(h)}_{\bt,\bpi,\bpsi,\ell,k}= - \sum_{j=1}^J \left( \kernel_h \star \frac{\check\psi_{\ell,j}^{(h)}-\psi_{\ell,j}}{ \psi_{\ell,j}}\right) - \sum_{j=1}^J \frac{    Z_{\bt,\bpi,\bpsi,k,j}'}{    Z_{\bt,\bpi,\bpsi,k,j}}.
    $$
This, in its own turn, let us write down an explicit expression for the score function $\dot{\bmap}^{(h)}_k(\bt,\bpi,\bpsi)$ using \eqref{eq:ldot}. In addition, the second order partial derivatives of $\bt\mapsto \psi_{\bt,k,j}(\bpi,\bpsi)$ can also be written down explicitly as 
\begin{multline*}
\frac{\partial^2}{\partial t_{\ell'} \partial t_\ell}   \psi_{\bt,k,j}(\bpi,\bpsi) = \psi_{\bt,k,j}(\bpi,\bpsi) \left(\left[ - \frac{\check\psi_{k,j}^{(h)} -\psi_{k,j}}{\psi_{k,j}}-     \frac{    Z_{\bt,\bpi,\bpsi,k,j}'}{    Z_{\bt,\bpi,\bpsi,k,j}}\right]^2 \right.\\\left. -  \left[ \frac{    Z_{\bt,\bpi,\bpsi,k,j}''}{    Z_{\bt,\bpi,\bpsi,k,j}}- \left(\frac{    Z_{\bt,\bpi,\bpsi,k,j}'}{    Z_{\bt,\bpi,\bpsi,k,j}}\right)^2 \right]\right),
\end{multline*}
    with
    \[
   Z_{\bt,\bpi,\bpsi,k,j}''=\int_{\mathcal{X}_j}(\check\psi_{k,j}^{(h)}(u) -\psi_{k,j}(u))^2\exp\left( (t_K - \pi_K) \frac{\check\psi_{k,j}^{(h)}(u) -\psi_{k,j}(u)}{\psi_{k,j}(u)}\right)du.
\]
This also let us write a closed-form expression for the Hessian of the log-likelihood $\ddot{\bmap}^{(h)}_k(\bt,\bpi,\bpsi)$ using \eqref{eq:lddot}.
We now show that the four conditions of Proposition~\ref{thm:convparam} are satisfied.

\color{black}
\begin{enumerate}
\item  Using \eqref{eq:ldot}, we have
        $$\|\dot{\bmap}^{(h)}_k(\bt,\bpi,\bpsi)\|_{\infty} \leq \|\score_{\bt,\bpsi_{\bt}(\bpi,\bpsi),k}^{(h)}\|_{\infty}  + \|\score_{\bt,\bpsi_{\bt}(\bpi,\bpsi),\ell}^{(h)}\|_\infty \sum_{\ell=1}^K  \left\| \phi^{(h)}_{\bt,\bpi,\bpsi,\ell,k}\right\|_{\infty}.$$
        Note that $\| \score_{\bt,\bpsi_{\bt}(\bpi,\bpsi),k}^{(h)}\|_\infty \leq 1/t_k$. 
        In addition, for any $\bpsi$ and any $\bpi$, we have $Z_{\bpi,\bpi,\bpsi,k,j}=1$ and $Z_{\bpi,\bpi,\bpsi,k,j}'=0$ for any $(k,j)$. Therefore, using  Lemma~\ref{lem:control_sup_normdev}, we have that  $\phi^{(h)}_{\bpi^\star,\bpi^\star,\bpsi^\star,\ell,k}$ is bounded leading, by continuity, that $\phi^{(h)}_{\bpi^\star,\bpi^\star,\bpsi,\ell,k}$ it is bounded in $V$. In addition, for any $(\bt,\bpi,\bpsi)\in V$, we have $t_k>\min_k \pi_k^\star/2$ and thus $t_k$ is bounded away from zero since by assumption any $\pi_k^\star>0$, leading that
$$\sup_{(\bt,\bpi,\bpsi)\in V} \| \score_{\bt,\bpsi_{\bt}(\bpi,\bpsi),k}^{(h)}\|_\infty=O(1).$$
In addition, $\|(\check\psi_{\ell, j}^{(h)}-\psi_{\ell , j} )/ \psi_{\bt,\ell,j}(\bpi,\bpsi)\|_\infty$ is bounded since, as elements of $\Psi(\mathcal{X}_j)$, $\check\psi_{\ell, j}^{(h)}$ and $\psi_{\ell , j}$ are upperbounded and bounded away from zero. Therefore, $\dot{\bmap}^{(h)}_k(\bt,\bpi,\bpsi)$ is upperbounded by a constant and hence, there exists a square integrable function that upper-bounds $\dot{\bmap}^{(h)}_k(\bt,\bpi,\bpsi)$  for any $(\bt,\bpi,\bpsi)\in V$. With the same reasoning, we can show that $\ddot{\bmap}^{(h)}_k(\bt,\bpi,\bpsi)$ is upperbounded by a constant and hence, there exists an integrable function that upper-bounds $\ddot{\bmap}^{(h)}_k(\bt,\bpi,\bpsi)$  for any $(\bt,\bpi,\bpsi)\in V$. %Using a similar approach that the one used in the proof of  Lemma~\ref{lem:deviation}, we have $\sup_{\psi_j \in\Psi(\mathcal{X}_j))}\|(\smooth_j \psi_j)'\|_\infty$ is bounded. Combining this result with the fact that $c_1<f^{(h)}_{\bpi,\bpsi}$, we have that any partial derivative of $\score_{\bt,\bpsi_{\bt}(\bpi,\bpsi),k}^{(h)}$ is bounded uniformly on $h$.  Using the Young's inequality, we have
%     $$
%     \| (\phi^{(h)}_{\bt,\bpi,\bpsi,\ell})' \|_\infty \leq \|\kernel_h\|_{L_2} \left\|\left(\frac{\check\psi_{\ell, j}^{(h)}-\psi_{\ell , j} }{ \psi_{\bt,\ell,j}(\bpi,\bpsi)}\right)'\right\|_{L_2}.
%     $$
%     We have $ \|\kernel_h\|_{L_2} =h^{-1/2} \|\kernel\|_{L_2}$ and $ \|\kernel\|_{L_2}<\infty$. In addition, since $\check\psi_{\ell, j}^{(h)}$, $\psi_{\ell , j} $ and $\psi_{\bt,\ell,j}(\bpi,\bpsi)$ belong to $\Psi(\mathcal{X}_j)$, they are greater that $c_1$ and their derivative is square integrable leading that $\left\|\left(\frac{\check\psi_{\ell, j}^{(h)}-\psi_{\ell , j} }{ \psi_{\bt,\ell,j}(\bpi,\bpsi)}\right)'\right\|_{L_2}$ is bounded and thus Condition~\ref{C0} holds true.
     \item The previous result implies that $\dot{\bmap}^{(h)}_k(\bt,\bpi,\bpsi) $ belongs to a Sobolev space $\mathcal{W}^{1,2,r}(\mathcal{X})$ defined by \eqref{eq:sobolev} where the  radius $r$ has an order $h^{-1}$. Hence, considering the space
     $$\mathcal{E}_{1,h}=\{ \dot\bmap^{(h)}(\bt,\bpi,\bpsi): (\bt,\bpi,\bpsi)\in V\},$$
     we have that $\mathcal{E}_{1,h}$ is included to a Sobolev space $\mathcal{W}^{1,2,r}(\mathcal{X})$ where the radius $r$ has an order $h^{-1}$ (see proof of Lemma~\ref{lem:deviation}), leading that
     $$
     \Eg \left[\sup_{e^{(h)} \in \mathcal{E}_{1,h}}\left|\Gn e^{(h)} \right|  \right]=o(h^{-1/2}).
     $$
     Let $\mathcal{D}_{1,n,r}$ be the class of functions defined by
     $$\mathcal{D}_{1,n,r}=\{n^{r-1/2}\dot\bmap^{(h)}(\bt,\bpi,\bpsi): (\bt,\bpi,\bpsi)\in V\},$$
     then we have
          $$
     \Eg \left[\sup_{d^{(n,h)} \in \mathcal{D}_{1,n,r}}\left|\Gn d^{(n,h)}  \right|  \right]=o(h^{-1/2}n^{r-1/2}).
     $$
     Since by Assumptions~\ref{ass:band1}, we have $h^{-1/2}=o(n^{1/2 - r})$, we have
               $$
     \Eg \left[\sup_{d^{(n,h)} \in \mathcal{D}_{1,n,r}}\left|\Gn d^{(n,h)}  \right|  \right]=o(1),
     $$
     leading that  $\mathcal{D}_{n,r}$ is $g^\star$-Donsker. With the same reasoning, we can show that $\ddot{\bmap}^{(h)}_k(\bt,\bpi,\bpsi) $ belongs to a Sobolev space  $\mathcal{W}^{1,2,r}(\mathcal{X})$ where the radius $r$ has an order $h^{-1}$. Hence, considering the space
     $$\mathcal{E}_{2,h}=\{ \ddot\bmap^{(h)}(\bt,\bpi,\bpsi): (\bt,\bpi,\bpsi)\in V\},$$
     we have that $\mathcal{E}_{2,h}$ is a subset of a Sobolev space $\mathcal{W}^{1,2,r}(\mathcal{X})$ where the radius $r$ has an order $h^{-1}$, leading that
     $$
     \Eg \left[\sup_{e^{(h)} \in \mathcal{E}_{2,h}}\left|\Gn e^{(h)} \right|  \right]=o(h^{-1/2}).
     $$
     Let $\mathcal{D}_{2,n,r}$ be the class of functions defined by
     $$\mathcal{D}_{2,n,r}=\{n^{r-1/2}\ddot\bmap^{(h)}(\bt,\bpi,\bpsi): (\bt,\bpi,\bpsi)\in V\},$$
     then we have
          $$
     \Eg \left[\sup_{d^{(n,h)} \in \mathcal{D}_{2,n,r}}\left|\Gn d^{(n,h)}  \right|  \right]=o(h^{-1/2}n^{r-1/2}).
     $$
     Since by Assumptions~\ref{ass:band1}, we have $h^{-1/2}=o(n^{1/2 - r})$, we have
               $$
     \Eg \left[\sup_{d^{(n,h)} \in \mathcal{D}_{2,n,r}}\left|\Pn d^{(n,h)} - \Eg d^{(n,h)}  \right|  \right]=o(1),
     $$
     implying that $\{\ddot\bmap(\bt,\bpi,\bpsi): (t,\bpi,\bpsi)\in V\}$ is $g^\star$-Glivenko-Cantelli and thus that Condition~\ref{C1} of Proposition~\ref{thm:convparam} holds true.
  \item   Note that for any $(\bpi,\bpsi)$, we have using \eqref{eq:mapingpsi} that $\psi_{\bpi,k,j}(\bpi,\bpsi) = \psi_{k,j}$ and hence $\bpsi_{\bpi}(\bpi,\bpsi)=\bpsi$ leading that Condition \ref{C2} of Proposition~\ref{thm:convparam} holds true. 
  \item In addition, since $\bpsi_{\bpi^\star}(\bpi^\star,\bpsi^\star)=\bpsi^\star$ and $\phi^{(h)}_{\bpi^\star,\bpi^\star,\bpsi^\star,\ell}=\zeta_{\bpsi^\star,\check\bpsi^{(h)},h,k}$ and $\zeta_{\bpsi^\star,\check\bpsi^{(h)},h,k}$ having been defined in \eqref{eq:zeta}. Then,
    $$\sum_{k=1}^K \pi^\star_k \score_{\bpi^\star,\bpsi_{\bt}(\bpi^\star,\bpsi^\star),k}^{(h)} \phi^{(h)}_{\bpi^\star,\bpi^\star,\bpsi^\star,k}= \noisy_{\bpi^\star,\bpsi^\star}^{(h)}[\check\bpsi^{(h)}- \bpsi^\star].$$ 
    Hence,  we have 
\begin{equation}\label{eq:equality}
\dot{\bmap}^{(h)}(\bpi^\star,\bpi^\star,\bpsi^\star)  = \beffscore_{\bpi^\star,\bpsi^\star}^{(h)}.
\end{equation}
In addition, we have $\beffscore_{\bpi^\star,\bpsi^\star,k}^{(h)}$ is a continuous function of $h$ and since $\beffscore_{\bpi^\star,\bpsi^\star,k}=\lim_{h\to 0} \beffscore_{\bpi^\star,\bpsi^\star,k}^{(h)}$, leading that Condition \ref{C3}  of Proposition~\ref{thm:convparam} holds true. 
\item For any random sequence $\tilde\bpi^{(n)}$ that converges in probability to $\bpi^\star$, we have that $\tilde\bpi^{(n)}$ belongs to $\mathcal{B}(\bpi^\star)$ with high-probability. Hence, since Assumptions~\ref{ass:controlvariancetrue} and \ref{ass:controlvariancekernel} are supposed to hold true, we have  by Theorem~\ref{thm:rateL1}, 
$$
 \sum_{k=1}^K \sum_{j=1}^J \| \psi^\star_{k,j} - \widehat{\psi}^{(h,n,\tilde\bpi^{(n)})}_{k,j}\|^2_1 = O_\mathbb{P}( n^{-1/2}h^{-1/2}+h^2+\|\tilde\bpi^{(n)} - \bpi^\star\|_1),
$$ and thus  and that Assumptions~\ref{ass:band1}, then 
$$
 \sum_{k=1}^K \sum_{j=1}^J \| \psi^\star_{k,j} - \widehat{\psi}^{(h,n,\tilde\bpi^{(n)})}_{k,j}\|^2_1 = O_\mathbb{P}(  \|\tilde\bpi^{(n)} - \bpi^\star\|_1) + o_\mathbb{P}(n^{-r}).
 $$
 
Since $\tilde\bpi^{(n)}$ converges in probability to $\bpi^\star$, this implies that $ \sum_{k=1}^K \sum_{j=1}^J \| \psi^\star_{k,j} - \widehat{\psi}^{(h,n,\tilde\bpi^{(n)})}_{k,j}\|^2_1 = o_\mathbb{P}(1)$, leading that
 $$
         \widehat{\bpsi}^{(h,n,\tilde\bpi^{(n)})} \xrightarrow{p} \bpsi^\star.
 $$
 Because $\dot{\bmap}^{(h)}(\bpi,\bpi,\bpsi)$ is the score function at model $f^{(h)}_{\bpi,\bpsi}$, we have
 $$
 \forall (\bpi,\bpsi)\in\Theta,\, \int f^{(h)}_{\bpi,\bpsi}(\bx) \dot{\bmap}^{(h)}(\bpi,\bpi,\bpsi)(\bx)d\bx = \boldsymbol{0}_K,
 $$
leading that
\begin{equation}\label{eq:centeredscore}
     \forall (\bpi,\bpsi)\in\Theta,\, \mathbb{E}_{g_{\bpi,\bpsi}} \left[\frac{f^{(h)}_{\bpi,\bpsi}(\bX_1)}{g_{\bpi,\bpsi}(\bX_1)} \dot{\bmap}^{(h)}(\bpi,\bpi,\bpsi)(\bX_1)\right] = \boldsymbol{0}_K.
\end{equation}
In addition, recall that by definition
$$
\forall \bpsi \in \Psi_K(\mathcal{X}),\, \noisy_{\bpi^\star,\bpsi^\star}^{(h)}[\bpsi-\bpsi^\star] =\frac{\partial}{\partial t} \ln f^{(h)}_{\bpi^\star,\bpsi_t} \Big|_{t=0}.
$$
In addition, as stated by \eqref{eq:equality}, we have $\dot{\bmap}^{(h)}(\bpi^\star,\bpi^\star,\bpsi^\star)  = \beffscore_{\bpi^\star,\bpsi^\star}^{(h)}$,  then using \eqref{eq:condproj} at $(\bpi,\bpsi)=(\bpi^\star,\bpsi^\star)$ leads to
\begin{equation}\label{eq:proj}
\forall  \bpsi, \forall k\in\{1,\ldots,K\},\, \Eg\left[\effscore_{\bpi^\star,\bpsi^\star,k}^{(h)}\noisy_{\bpi^\star,\bpsi^\star}^{(h)}[\bpsi-\bpsi^\star](\bX_1) \right]=0.
\end{equation}
Using \eqref{eq:centeredscore} and \eqref{eq:proj} provides
$$
    \Eg \left[\dot{\bmap}^{(h)}_k(\bpi^\star,\bpi^\star,\bpsi)(\bX_1)\right] = \Delta_{1,\bpi^\star,\bpsi^\star,\bpsi,k} + \Delta_{2,\bpi^\star,\bpsi^\star,\bpsi,k}   ,
$$
with 
$$
\Delta_{1,\bpi^\star,\bpsi^\star,\bpsi,k}=
    \Eg \left[\effscore_{\bpi^\star,\bpsi^\star,k}^{(h)}(\bX_1)\left(\noisy_{\bpi^\star,\bpsi^\star}^{(h)}[\bpsi-\bpsi^\star](\bX_1)-\frac{f^{(h)}_{\bpi^\star,\bpsi}(\bX_1)-f^{(h)}_{\bpi^\star,\bpsi^\star}(\bX_1)}{f^{(h)}_{\bpi^\star,\bpsi^\star}(\bX_1)} \right)\right]
$$
and
$$
\Delta_{2,\bpi^\star,\bpsi^\star,\bpsi,k}=\Eg \left[\frac{f^{(h)}_{\bpi^\star,\bpsi^\star}(\bX_1)-f^{(h)}_{\bpi^\star,\bpsi}(\bX_1)}{f^{(h)}_{\bpi^\star,\bpsi^\star}(\bX_1)} \left(\dot{\bmap}^{(h)}_k(\bpi^\star,\bpi^\star,\bpsi)(\bX_1)-\effscore_{\bpi^\star,\bpsi^\star,k}^{(h)}(\bX_1)\right)\right] .
$$
From Lemma~\ref{lem:Delta}, we have 
$$
\Delta_{1,\bpi^\star,\bpsi^\star,\bpsi,k} =  \sum_{k=1}^K \sum_{j=1}^J O\left(\left\| \psi_{k,j} - \psi_{k,j}^\star\right\|_{L_1}^2\right) + O(h^2).
$$
and
$$
\Delta_{2,\bpi^\star,\bpsi^\star,\bpsi,k} =  \sum_{k=1}^K \sum_{j=1}^J O\left(\left\| \psi_{k,j} - \psi_{k,j}^\star\right\|_{L_1}^2\right) + O(h^2).
$$
Since Assumption~\ref{ass:band1} ensures that $n^{-1/2}h^{-1/2}=o(n^{-r})$ and $h^2=o(n^{-r})$, using Theorem~\ref{thm:rateL1}, we have
$$
\Delta_{1,\bpi^\star,\bpsi^\star,\widehat{\bpsi}^{(h,n,\tilde\bpi^{(n)})}}^{(h)} = O_{\mathbb{P}}( \|\tilde\bpi^{(n)} - \bpi^\star\|) + o_\mathbb{P}(n^{-r})
$$
and
$$
\Delta_{2,\bpi^\star,\bpsi^\star,\widehat{\bpsi}^{(h,n,\tilde\bpi^{(n)})}}^{(h)} = O_{\mathbb{P}}( \|\tilde\bpi^{(n)} - \bpi^\star\|) + o_\mathbb{P}(n^{-r}).
$$
Hence, condition \ref{C4} is satisfied. 
\end{enumerate}
\color{black}

Since all the conditions of Proposition~\ref{thm:convparam} are satisfied, then  for any random sequence $\tilde\bpi^{(n)} \xrightarrow{p}  \bpi^\star$,
        \begin{multline}\label{eq:condVV}
\profilenh(\bpi^\star)=\profilenh(\tilde\bpi^{(n)}) + (\tilde\bpi^{(n)} -  \bpi^\star)^\top \Pn \beffscores \\ - \frac{1}{2}(\tilde\bpi^{(n)} -  \bpi^\star)^\top \befffishers(\tilde\bpi^{(n)} -  \bpi^\star) + o_\mathbb{P}([ \|\tilde\bpi^{(n)} -  \bpi^\star\| + n^{-r}  ]^2).
\end{multline}
    Let $\bUp= n^{-r}  \Pn\beffscores $ and $\bup= n^{-r}  (\hpi -  \bpi^{\star})$. Applying \eqref{eq:condVV} with $\tilde \bpi^{(n)}=\hpi$ implies that
    $$
   n^{2r}  \profilenh(\bpi^\star)= n^{2r}  \profilenh(\hpi) + \buptop\bUp - \frac{1}{2} \buptop \befffishers \bup + o_\mathbb{P}([\|\bup\|_2+1]^2).
    $$
Applying \eqref{eq:condVV} with $\tilde \bpi^{(n)}=\bpi^\star +  n^{-r}  \befffishersinv \bUp$ implies that
    $$
  n^{2r}   \profilenh(\bpi^\star)= n^{2r}  \profilenh\left(\bpi^\star +  n^{-r}  \befffishersinv \bUp\right) + \frac{1}{2} \bUptop \befffishersinv \bUp  + o_\mathbb{P}(1).
    $$
Taking the difference of the previous two equations, we have
\begin{multline*}
\buptop\bUp - \frac{1}{2} \buptop \befffishers \bup-\frac{1}{2} \bUptop \befffishersinv \bUp  + o_\mathbb{P}([\|\bup\|_2+1]^2) \\= n^{2r}  \profilenh\left(\bpi^\star +  n^{-r}  \befffishersinv \bUp\right) - n^{2r}  \profilenh(\hpi)     .
\end{multline*}
Note that we have 
\begin{multline*}
    -\frac{1}{2}\left(\bup - \befffishersinv \bUp  \right)^\top\befffishers \left(\bup - \befffishersinv \bUp  \right)\\=\buptop\bUp - \frac{1}{2} \buptop \befffishers \bup-\frac{1}{2} \bUptop \befffishersinv \bUp . 
\end{multline*}
Combining this results with the fact that by definition $\hpi$ is a global minimizer of $\profilenh$ leads
$$
-\frac{1}{2}\left(\bup - \befffishersinv \bUp  \right)^\top\befffishers \left(\bup - \befffishersinv \bUp  \right)+ o_\mathbb{P}([\|\bup\|_2+1]^2)\geq 0.
$$
Since $\befffishers$ is invertible by Lemma \eqref{lem:invert}, there exists a strictly positive constant $c$ such that 
$$
\frac{1}{2}\left(\bup - \befffishersinv \bUp  \right)^\top\befffishers \left(\bup - \befffishersinv \bUp  \right) \geq c \|\bup - \befffishersinv \bUp  \|_2^2.$$
Hence,
$$
o_\mathbb{P}([\|\bup\|_2+1]^2)\geq c \|\bup - \befffishersinv \bUp  \|_2^2
$$
leading that
\begin{equation}\label{eq:ratenorm}
\|\bup - \befffishersinv \bUp  \|_2=o_\mathbb{P}(\|\bup\|_2+1).
\end{equation}
Central limit theorem combined with invertibility of $\bSigma$ implies that $$\|\befffishersinv \bUp\|_2 =O_\mathbb{P}(n^{r-1/2}).$$
To establish the stochastic order of $\|\bup\|_2$,  suppose that $\|\bup\|_2$ diverges in probability. This leads that $\|\befffishersinv \bUp\|$ is stochastically negligible with respect to $\|\bup\|_2$, \emph{i.e.,} $\|\befffishersinv \bUp\|=o_\mathbb{P}(\|\bup\|_2)$. Then, using reverse triangular inequality, we have
$\|\bup\|_2 |1-o_\mathbb{P}(1)|\leq \|\bup\|_2 o_\mathbb{P}(1+1/\|\bup\|_2)$. This implies that $|1-o_\mathbb{P}(1)|\leq o_\mathbb{P}(1)$ which is impossible. Therefore, 
$$
\|\bup\|_2=O_\mathbb{P}(1),
$$
leading that the 
$$\|\hpi - \bpi^\star\|_2=O_\mathbb{P}(n^{-r}).$$ 
%Using this order in the right-hand side of \eqref{eq:ratenorm} implies that
%$$
%\|\bup - \befffishersinv \bUp  \|_2=o_\mathbb{P}(1).
%$$
%\color{red} 
%Central limit theorem implies that $n^{1/2-r}\befffisher^{\star -1/2}\bUp $ converges in distribution to a standard $(K-1)$-variate Gaussian distribution%, then $\befffisher^{\star -1/2}\bu$ converges in distribution to a  centered multivariate Gaussian distribution and so
%$$
%\bup\xrightarrow{d} \mathcal{N}_K(\boldsymbol{0}_K, \befffishersinv).
%$$

\end{proof}

\begin{lemma}\label{lem:Delta}
Under the assumptions of Theorem~\ref{thm:norma}, we have
$$
\Delta_{1,\bpi^\star,\bpsi^\star,\bpsi,k} =  \sum_{k=1}^K \sum_{j=1}^J O\left(\left\| \psi_{k,j} - \psi_{k,j}^\star\right\|_{L_1}^2\right) + O(h^2)
$$
and
$$
\Delta_{2,\bpi^\star,\bpsi^\star,\bpsi,k} =  \sum_{k=1}^K \sum_{j=1}^J O\left(\left\| \psi_{k,j} - \psi_{k,j}^\star\right\|_{L_1}^2\right) + O(h^2),
$$
with 
$$ 
\Delta_{1,\bpi^\star,\bpsi^\star,\bpsi,k}=-\Eg \left[\effscore_{\bpi^\star,\bpsi^\star,k}^{(h)}(\bX_1)\kappa_{\bpsi,1}^{(h)}(\bX_1)\right]
$$
and
$$ 
\Delta_{2,\bpi^\star,\bpsi^\star,\bpsi,k}=- \Eg \left[\kappa_{\bpsi,2}^{(h)}(\bX_1)\kappa_{\bpsi,3,k}^{(h)}(\bX_1)\right],
$$
where $
\kappa_{\bpsi,1}^{(h)}=\kappa_{\bpsi,2}^{(h)} - \noisy_{\bpi^\star,\bpsi^\star}^{(h)}[\bpsi - \bpsi^\star]
$, 
$
\kappa_{\bpsi,2}^{(h)}=[f^{(h)}_{\bpi^\star,\bpsi}-f^{(h)}_{\bpi^\star,\bpsi^\star}]/f^{(h)}_{\bpi^\star,\bpsi^\star}
$ and
$
\kappa_{\bpsi,3,k}^{(h)}=\dot{\bmap}^{(h)}_k(\bpi^\star,\bpi^\star,\bpsi)-\dot{\bmap}^{(h)}_k(\bpi^\star,\bpi^\star,\bpsi^\star)
$.
\end{lemma}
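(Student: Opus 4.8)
The plan is to read both $\Delta_1$ and $\Delta_2$ as second-order remainders in the deviation of $\bpsi$ from $\bpsi^\star$, quantified through the relative increments $\delta_{k,j}:=(\psi_{k,j}-\psi^\star_{k,j})/\psi^\star_{k,j}$. I will use throughout that every density in $\Psi(\mathcal{X}_j)$ is bounded above by $C_1$ and, by the constraints defining $\Psi(\mathcal{X}_j)$ together with compactness of $\mathcal{X}_j$, uniformly bounded below by a positive constant; hence each $\delta_{k,j}$ is uniformly bounded, is Lipschitz with a uniform constant $L$ (because $\psi_{k,j},\psi^\star_{k,j}$ have uniformly bounded derivatives), and obeys the interpolation bound $\|\delta_{k,j}\|_\infty^2\lesssim L\,\|\delta_{k,j}\|_1$ on the compact $\mathcal{X}_j$, while $\|\delta_{k,j}\|_1\lesssim\|\psi_{k,j}-\psi^\star_{k,j}\|_1$. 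I will also use repeatedly the self-adjointness of the symmetric kernel, $\int(\kernel_h\star\delta)(x)G(x)\,dx=\int\delta(y)(\kernel_h\star G)(y)\,dy$, together with $\|\kernel_h\star G\|_\infty\le\|G\|_\infty$ and $\|\kernel_h\star\delta\|_1\le\|\delta\|_1$; the uniform boundedness of $\effscore^{(h)}_{\bpi^\star,\bpsi^\star,k}$ and of the smoothed posteriors $\omega^{(h)}_{\bpi^\star,\bpsi^\star,k}$; and the projection identity \eqref{eq:proj}, $\Eg[\effscore^{(h)}_{\bpi^\star,\bpsi^\star,k}\,\noisy^{(h)}_{\bpi^\star,\bpsi^\star}[\bpsi-\bpsi^\star]]=0$.

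For $\Delta_1$, note that $\kappa^{(h)}_{\bpsi,1}$ is by construction the quadratic remainder of $\kappa^{(h)}_{\bpsi,2}$ about $\bpsi^\star$. Writing $\smooth\psi_k=\smooth\psi^\star_k\exp(\sum_j\kernel_h\star\eta_{k,j})$ with $\eta_{k,j}=\ln(1+\delta_{k,j})$, one has $\kappa^{(h)}_{\bpsi,2}=\sum_k\omega^{(h)}_{\bpi^\star,\bpsi^\star,k}(e^{\Xi_k}-1)$ with $\Xi_k=\sum_j\kernel_h\star\eta_{k,j}$, whose first-order part is exactly $\noisy^{(h)}_{\bpi^\star,\bpsi^\star}[\bpsi-\bpsi^\star]=\sum_k\omega^{(h)}_{\bpi^\star,\bpsi^\star,k}\sum_j\kernel_h\star\delta_{k,j}$. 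Expanding $e^{\Xi_k}-1$ to second order and using $\eta_{k,j}=\delta_{k,j}-\tfrac12\delta_{k,j}^2+\cdots$ shows that the quadratic part of $\kappa^{(h)}_{\bpsi,1}$ is $\sum_k\omega^{(h)}_{\bpi^\star,\bpsi^\star,k}\tfrac12\big[(\sum_j\kernel_h\star\delta_{k,j})^2-\sum_j\kernel_h\star\delta_{k,j}^2\big]$, up to a cubic-and-higher remainder with $|\delta|^3$-type integrands. Pairing with the bounded $\effscore^{(h)}_{\bpi^\star,\bpsi^\star,k}$ and marginalising to the active coordinates, I classify the contributions into: (a) distinct-coordinate products $(\kernel_h\star\delta_{k,j})(\kernel_h\star\delta_{k,j'})$, $j\ne j'$, bounded by $\|\delta_{k,j}\|_1\|\delta_{k,j'}\|_1\le\tfrac12(\|\delta_{k,j}\|_1^2+\|\delta_{k,j'}\|_1^2)$ after moving one kernel onto the bounded marginal; (b) the same-coordinate pieces, which occur only in the local-variance combination $(\kernel_h\star\delta_{k,j})^2-\kernel_h\star\delta_{k,j}^2=-\tfrac12\iint\kernel_h(x-y)\kernel_h(x-z)(\delta_{k,j}(y)-\delta_{k,j}(z))^2\,dy\,dz$, which the Lipschitz bound and the finite second moment $\int v^2\kernel(v)\,dv$ render $O(h^2)$; and (c) cubic-and-higher terms, bounded through $\int|\delta_{k,j}|^3\le\|\delta_{k,j}\|_\infty^2\|\delta_{k,j}\|_1\lesssim\|\delta_{k,j}\|_1^2$. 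Summing over $(k,j)$ and converting $\delta$ to $\psi-\psi^\star$ gives the claim for $\Delta_1$.

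For $\Delta_2$ the same toolkit applies after two reductions. Since $\|g^\star-f^{(h)}_{\bpi^\star,\bpsi^\star}\|_\infty=O(h^2)$ by Lemma~\ref{lem:taylor} and both $\kappa^{(h)}_{\bpsi,2}$ and $\kappa^{(h)}_{\bpsi,3,k}$ are uniformly bounded, replacing $\Eg$ by integration against $f^{(h)}_{\bpi^\star,\bpsi^\star}$ costs only $O(h^2)$. Next, splitting $\kappa^{(h)}_{\bpsi,2}=\noisy^{(h)}_{\bpi^\star,\bpsi^\star}[\bpsi-\bpsi^\star]+\kappa^{(h)}_{\bpsi,1}$ and using $\dot\bmap^{(h)}_k(\bpi^\star,\bpi^\star,\bpsi^\star)=\effscore^{(h)}_{\bpi^\star,\bpsi^\star,k}$ from \eqref{eq:equality} together with \eqref{eq:proj} annihilates the first-order times ``efficient'' part of the product: the contribution $\Eg[\noisy^{(h)}_{\bpi^\star,\bpsi^\star}[\bpsi-\bpsi^\star]\,\effscore^{(h)}_{\bpi^\star,\bpsi^\star,k}]$ vanishes. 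What remains is the pairing of the linear nuisance score with the score increment $\dot\bmap^{(h)}_k(\bpi^\star,\bpi^\star,\bpsi)-\dot\bmap^{(h)}_k(\bpi^\star,\bpi^\star,\bpsi^\star)$, plus the product $\kappa^{(h)}_{\bpsi,1}\kappa^{(h)}_{\bpsi,3,k}$ (quadratic times linear, hence absorbed by (c)). Using the explicit first and second $\bt$-derivatives of the least-favourable submodel \eqref{eq:mapingpsi}, in particular its normalisation terms $Z_{\bt,\bpi,\bpsi,k,j}'$ and $Z_{\bt,\bpi,\bpsi,k,j}''$, the surviving bilinear form is expressed once more through convolutions of the $\delta_{k,j}$, and its distinct-coordinate, same-coordinate local-variance, and higher-order parts are bounded exactly as in (a)--(c).

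The main obstacle is step (b) for $\Delta_2$: one must check that the genuinely quadratic same-coordinate contributions really assemble into the local-variance combination $(\kernel_h\star\delta_{k,j})^2-\kernel_h\star\delta_{k,j}^2$, so that smoothness and the second-order kernel yield $O(h^2)$, rather than leaving an isolated $\int G\,\delta_{k,j}^2$ term. Such a term is controlled only by $\|\delta_{k,j}\|_\infty\|\delta_{k,j}\|_1\lesssim\|\delta_{k,j}\|_1^{3/2}$, which is strictly weaker than the claimed $\|\delta_{k,j}\|_1^2$ and would be too crude for the application of Lemma~\ref{lem:Delta} inside the proof of Theorem~\ref{thm:norma}. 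Securing this cancellation is exactly where the normalisation of the submodel \eqref{eq:mapingpsi} and the orthogonality of the efficient score to the tangent cone must be used in tandem; this bookkeeping, rather than any single estimate, is the delicate part of the proof.
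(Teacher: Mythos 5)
Your treatment of $\Delta_{1,\bpi^\star,\bpsi^\star,\bpsi,k}$ is sound and is essentially the paper's argument in different clothing. The paper bounds $|\Delta_{1,\bpi^\star,\bpsi^\star,\bpsi,k}|\le\|\effscore_{\bpi^\star,\bpsi^\star,k}^{(h)}\|_\infty\,\|\kappa_{\bpsi,1}^{(h)}\|_{L_1(g^\star)}$ (the uniform boundedness of the efficient score coming from Lemma~\ref{lem:control_sup_normdev}), and then expands $\bpsi\mapsto f^{(h)}_{\bpi^\star,\bpsi}$ to second order with an integral-form Gateaux remainder along $\bpsi^\star+t(\bpsi-\bpsi^\star)$; the second derivative splits into exactly your classes (a) and (b): the distinct-coordinate products $m_{a,\delta_{\bpsi},k,\ell,\ell'}$ are controlled by products of $L_1$-norms as in \eqref{eq:ma}, and the same-coordinate pieces $m_{b,\delta_{\bpsi},k,\ell}$, which appear as kernel variances, are $O(h^2)$ as in \eqref{eq:mb} (the paper obtains this from sub-Gaussianity of $\kernel$ and $\|(\ln\psi^\star_{k,\ell})''\|_\infty\le C_3$ rather than from your Lipschitz-plus-second-moment argument, but the two are interchangeable on a compact domain). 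Your use of relative increments and of the exponential-of-convolution expansion, instead of the paper's absolute increments and linear-path Taylor expansion, is an immaterial difference, and your cubic control via $\|\delta\|_\infty^2\lesssim\|\delta\|_1$ matches in strength what the paper needs.

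The gap is in $\Delta_{2,\bpi^\star,\bpsi^\star,\bpsi,k}$, and you name it yourself. The paper proves the second display without any appeal to the orthogonality relation \eqref{eq:proj}: it applies Cauchy--Schwarz, $|\Delta_{2,\bpi^\star,\bpsi^\star,\bpsi,k}|\le\|\kappa_{\bpsi,2}^{(h)}\|_{L_2(g^\star)}\|\kappa_{\bpsi,3,k}^{(h)}\|_{L_2(g^\star)}$, and controls each factor separately as a first-order Taylor remainder, reducing both $L_2$-norms to the cross-term quantity \eqref{eq:ma}. Your alternative route --- splitting $\kappa_{\bpsi,2}^{(h)}=\noisy_{\bpi^\star,\bpsi^\star}^{(h)}[\bpsi-\bpsi^\star]+\kappa_{\bpsi,1}^{(h)}$, killing $\Eg[\noisy_{\bpi^\star,\bpsi^\star}^{(h)}[\bpsi-\bpsi^\star](\bX_1)\,\effscore_{\bpi^\star,\bpsi^\star,k}^{(h)}(\bX_1)]$ by \eqref{eq:proj} and \eqref{eq:equality}, and absorbing $\Eg[\kappa_{\bpsi,1}^{(h)}\kappa_{\bpsi,3,k}^{(h)}]$ via boundedness of $\kappa_{\bpsi,3,k}^{(h)}$ --- is fine as far as it goes, but it terminates in the unresolved bilinear term $\Eg[\noisy_{\bpi^\star,\bpsi^\star}^{(h)}[\bpsi-\bpsi^\star](\bX_1)\,\kappa_{\bpsi,3,k}^{(h)}(\bX_1)]$, whose same-coordinate (diagonal) contributions you explicitly do not control. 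By your own diagnosis, absent the conjectured cancellation these contributions are only $O(\max_{k,\ell}\|\psi_{k,\ell}-\psi^\star_{k,\ell}\|_{L_1}^{3/2})$, which is too weak both for the statement and for its use inside the proof of Theorem~\ref{thm:norma}; asserting that the submodel normalisation and the orthogonality of the efficient score ``must be used in tandem'' is a plan, not a proof, so the second bound is not established in your proposal. I note for completeness that your worry is substantive rather than pedantic: in the paper's own proof, the passage from $\int(\prod_j\smooth_j\psi^\star_{k,j})[\chi_{1,\bpsi^\star,\delta_{\bpsi},k'}^{(h)}]^2$ to the off-diagonal terms $m_{a,\delta_{\bpsi},k,\ell,\ell'}$ alone is exactly the step your objection points at, since the diagonal squares $(\kernel_h\star(\delta_{\bpsi,k,\ell}/\psi^\star_{k,\ell}))^2$ are positive squares and not of variance type; completing your argument would therefore require supplying that diagonal estimate explicitly, which neither your proposal nor a simple citation of the paper's displayed inequalities provides.
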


\begin{proof}[Proof of Lemma~\ref{lem:Delta}]
We have
$$
 | \Delta_{1,\bpi^\star,\bpsi^\star,\bpsi,k}| \leq   \left\|\effscore_{\bpi^\star,\bpsi^\star,k}^{(h)}\right\|_\infty   \left\|\kappa_{\bpsi,1}^{(h)}  \right\|_{L_1(g^\star)}.$$
Using the definition of the efficient score function with smoothing given by \eqref{eq:effscore} as well as the definition of the nuisance score function with smoothing, we have
$$
\effscore_{\bpi^\star,\bpsi^\star,k}^{(h)} = \score_{\bpi^\star,\bpsi^\star,k}^{(h)}  - \sum_{k'=1}^K \pi^\star_{k'} \score_{\bpi^\star,\bpsi^\star,k'}^{(h)} \zeta^{(h)}_{\bpsi^\star,\check\bpsi^{(h)},k'} .
$$
where $\check\bpsi^{(h)}\in\Psi(\mathcal{X})$ satisfies \eqref{eq:condproj}.
For any $\bpsi$, we have from \eqref{eq:naive} that $-1/\pi_K^\star \leq \score_{\bpi^\star,\bpsi,k}^{(h)}\leq 1/\pi^\star_k$. 
In addition, using the definition of $\Psi(\mathcal{X})$, $\zeta^{(h)}_{\bpsi^\star,\check\bpsi^{(h)},k}$ is bounded uniformly in $h$ due to Lemma ~\ref{lem:control_sup_normdev}.
Therefore,  there exists a positive constant $C$ such that  $\| \effscore_{\bpi^\star,\bpsi^\star,k}^{(h)} \|_\infty \leq C$,
leading that
\begin{equation}\label{eq:maj1}
| \Delta_{1,\bpi^\star,\bpsi^\star,\bpsi,k} | \leq C \left\|\kappa_{\bpsi,1}^{(h)}  \right\|_{L_1(g^\star)} .
\end{equation}
By Cauchy-Schwarz inequality, we have
\begin{equation}\label{eq:maj2}
 | \Delta_{2,\bpi^\star,\bpsi^\star,\bpsi,k}| \leq   \left\|\kappa_{\bpsi,2}^{(h)}  \right\|_{L_2(g^\star)}   \left\|\kappa_{\bpsi,3,k}^{(h)}  \right\|_{L_2(g^\star)}.
\end{equation}
    To control $\Delta_{1,\bpi^\star,\bpsi^\star,\bpsi,k}$ and $\Delta_{2,\bpi^\star,\bpsi^\star,\bpsi,k}$, it suffices to control $L_p(g^\star)$-norms of $\kappa_{\bpsi,1}^{(h)}$, $\kappa_{\bpsi,2}^{(h)}$ and $\kappa_{\bpsi,3,k}^{(h)}$. These controls can be done by noting that these three terms can be defined as the remainder with integral form of Taylor expansions using Gateaux derivatives. 
     To give the expressions of these Taylor expansions, we denote by $f_{\bpi^\star,\bpsi}^{(h)'}[\delta]$ and $f_{\bpi^\star,\bpsi}^{(h)''}[\delta][\delta]$, the first and second order derivatives of $\bpsi\mapsto f_{\bpi,\bpsi}^{(h)}$ in direction $\delta$ at $\bpsi$. 
Hence, we have
$$
 f_{\bpi^\star,\bpsi}^{(h)'} [\delta] = \sum_{k=1}^K \pi_k^\star  \left(\prod_{j=1}^J \smooth_j \psi_{k,j} \right)\chi_{1,\bpsi,\delta,k}^{(h)},
$$
and
$$
 f_{\bpi^\star,\bpsi}^{(h)''} [\delta][\delta] = \sum_{k=1}^K  \pi_k^\star \left( \prod_{j=1}^J \smooth_j \psi_{k,j}\right)\left[(\chi_{1,\bpsi,\delta,k}^{(h)})^2 - \chi_{2,\bpsi,\delta,k}^{(h)}\right] ,
$$
with
$$
\chi_{u,\bpsi,\delta,k}^{(h)}=\sum_{\ell=1}^J \kernel_h \star \left(\frac{\delta_{k,\ell} }{\psi_{k,\ell}  }\right)^u,
$$
Noting that any function $q$, we have $(\kernel_h \star q)(x_j)=\mathbb{E}_\kernel\left[q(x_j + Vh)\right]$, we have
\begin{multline*}
(\chi_{1,\bpsi,\delta,k}^{(h)})^2(\bx) - \chi_{2,\bpsi,\delta,k}^{(h)}(\bx)  =\\ \sum_{\ell=1}^J \sum_{\ell'\neq \ell} \mathbb{E}_\kernel\left[  \frac{\delta_{k,\ell}(x_\ell + Vh) }{\psi_{k,\ell}(x_\ell + Vh)  }\right]\mathbb{E}_\kernel\left[\frac{\delta_{k,\ell'}(x_{\ell'} + Vh) }{\psi_{k,\ell'}(x_{\ell'} + Vh)  }\right]  - \sum_{\ell=1}^J \text{Var}_\kernel\left[ \frac{\delta_{k,\ell}(x_\ell + Vh) }{\psi_{k,\ell}(x_\ell + Vh)  }\right].
\end{multline*}
Hence, we have
\begin{multline*}
f_{\bpi^\star,\bpsi}^{(h)''} [\delta][\delta](\bx) = \sum_{k=1}^K  \sum_{\ell=1}^J \sum_{\ell'\neq \ell} 
\pi_k^\star \epsilon_{a,\bpsi,\delta,k,\ell}^{(h)}(x_\ell)\epsilon_{a,\bpsi,\delta,k,\ell'}^{(h)}(x_{\ell'})  \left( \prod_{j\notin\{\ell,\ell'\}} \smooth_j \psi_{k,j}(x_j)\right) \\
- \sum_{k=1}^K  \sum_{\ell=1}^J \pi_k^\star \epsilon_{b,\bpsi,\delta,k,\ell}^{(h)}(x_\ell)\left( \prod_{j \neq \ell } \smooth_j \psi_{k,j}(x_j)\right)
\end{multline*}
with
\begin{equation}\label{eq:epsilona}
\epsilon_{a,\bpsi,\delta,k,\ell}^{(h)}(u) = \smooth_\ell \psi_{k,\ell} (u)\mathbb{E}_\kernel\left[  \frac{\delta_{k,\ell}(u + Vh) }{\psi_{k,\ell}(u + Vh)  }\right]
\end{equation}
and
\begin{equation}\label{eq:epsilonb}
\epsilon_{b,\bpsi,\delta,k,\ell}^{(h)}(u) = \smooth_\ell \psi_{k,\ell} (u)\text{Var}_\kernel\left[  \frac{\delta_{k,\ell}(u + Vh) }{\psi_{k,\ell}(u + Vh)  }\right].
\end{equation}
We consider the direction $\delta_{\bpsi}:=\bpsi - \bpsi^\star$ defined such that for each element $(k,j)$ we have $\delta_{\bpsi,k,j}=\psi_{k,j} - \psi_{k,j}^\star$ and the parameter  defined for $t\in[0,1]$ by $\bpsi_t:=\bpsi^\star + t\delta_{\bpsi}$,  leading that each element is defined by $\psi_{t,k,j}=\psi_{k,j}^\star + t\delta_{\bpsi,k,j}$. Note that using the definition of the naive score function with smoothing given by \eqref{eq:naive} as well as the definition of the nuisance score function with smoothing, we have $$\frac{f_{\bpi^\star,\bpsi^\star}^{(h)'}[\delta_{\bpsi}]}{f_{\bpi^\star,\bpsi^\star}^{(h)}}= \noisy_{\bpi^\star,\bpsi^\star}^{(h)}[\delta_{\bpsi}].$$
A Taylor expansion at order 2 of $\bpsi \mapsto f^{(h)}_{\bpi^\star,\bpsi} $ around $\bpsi = \bpsi^\star$ in direction $\delta_{\bpsi}$ implies that
$$
f^{(h)}_{\bpi^\star,\bpsi}=f^{(h)}_{\bpi^\star,\bpsi^\star} + f^{(h)'}_{\bpi^\star,\bpsi^\star} [\delta_{\bpsi}] +f^{(h)''}_{\bpi^\star,\bpsi^\star} [\delta_{\bpsi}] [\delta_{\bpsi}] + f_{\bpi^\star,\bpsi^\star}^{(h)}r_{1,\bpsi^\star,\bpsi},
$$
where $r_{1,\bpsi^\star,\bpsi}=\int_0^1 (1-t)  f_{\bpi^\star,\bpsi_t}^{(h)''}[\delta_{\bpsi}][\delta_{\bpsi}] /f_{\bpi^\star,\bpsi^\star}^{(h)} dt - f^{(h)''}_{\bpi^\star,\bpsi^\star} [\delta_{\bpsi}] [\delta_{\bpsi}]/f_{\bpi^\star,\bpsi^\star}^{(h)}$. 
 Dividing both sides of the previous equation by $f^{(h)}_{\bpi^\star,\bpsi^\star}$ implies that
$$
\kappa_{\bpsi,2}^{(h)} = \noisy_{\bpi^\star,\bpsi^\star}^{(h)}[\delta_{\bpsi}] +   \frac{f_{\bpi^\star,\bpsi^\star}^{(h)''}}{f_{\bpi^\star,\bpsi^\star}^{(h)}} + r_{1,\bpsi^\star,\bpsi}.
$$
Hence, using the definition of $\kappa_{\bpsi,1}^{(h)}$, the previous equation implies $\kappa_{\bpsi,1}^{(h)}$ is the remainder with integral form of a Taylor expansion at order 2 of $\bpsi \mapsto f^{(h)}_{\bpi^\star,\bpsi}/f^{(h)}_{\bpi^\star,\bpsi^\star}$ around $\bpsi = \bpsi^\star$ in direction $\delta_{\bpsi}$,  such that
$$\kappa_{\bpsi,1}^{(h)} =  \frac{f_{\bpi^\star,\bpsi^\star}^{(h)''}}{f_{\bpi^\star,\bpsi^\star}^{(h)}} + r_{1,\bpsi^\star,\bpsi}.$$

%    Indeed, $\kappa_{\bpsi,1}^{(h)}$ corresponds of the remainder with integral form of the Taylor expansion at order 2  of $\bpsi \mapsto f^{(h)}_{\bpi^\star,\bpsi}/f^{(h)}_{\bpi^\star,\bpsi^\star}$ around $\bpsi = \bpsi^\star$ in direction $\delta_{\bpsi} $ evaluated at $\bpsi=\bpsi_t$.
\paragraph*{Controlling the $L_1(g^\star)$-norm of $\kappa_{\bpsi,1}^{(h)}$}
Since     $f_{\bpi^\star,\bpsi^\star}^{(h)''}$ is a continuous function of $\bpsi$, we have
$
\|r_{1,\bpsi^\star,\bpsi}\|_{L^1(g^\star)} = o\left( \|  f_{\bpi^\star,\bpsi^\star}^{(h)''}/f_{\bpi^\star,\bpsi^\star}^{(h)}  \|_{L^1(g^\star)} \right)
$, hence we have
$$
\|\kappa_{\bpsi,1}^{(h)}\|_{L_1(g^\star)}  = (1+o(1)) \int_{\mathcal{X}} \left|f_{\bpi^\star,\bpsi^\star}^{(h)''}[\delta_{\bpsi}][\delta_{\bpsi}](\bx) \right|\frac{g^\star(\bx)}{f_{\bpi^\star,\bpsi^\star}^{(h)}(\bx)}d\bx.
$$
We have
\begin{align*}
    \frac{g^\star(\bx)}{f_{\bpi^\star,\bpsi^\star}^{(h)}(\bx)} & = \sum_{k=1}^K \frac{\pi_k^\star \prod_{j=1}^J \psi_{k,j}^\star}{ \sum_{\ell=1}^K \pi_\ell^\star \prod_{j=1}^J \smooth_j \psi_{\ell,j}^\star} \\
    & \leq  \sum_{k=1}^K\frac{\pi_k^\star \prod_{j=1}^J \psi_{k,j}^\star}{  \pi_k^\star \prod_{j=1}^J \smooth_j \psi_{k,j}^\star} \\
    & = \sum_{k=1}^K \exp\left(-\frac{h^2 \nu_{\kernel,2}}{2} \sum_{j=1}^J [\ln \psi_{k,j}^\star ]'' + o(h^2)\right).
\end{align*}
Hence, there exits a positive constant $C^\star$ such that
  $   \frac{g^\star(\bx)}{f_{\bpi^\star,\bpsi^\star}^{(h)}(\bx)}  \leq 1+C^\star h^2$,
leading, since $h=o(1)$, that
$
\|\kappa_{\bpsi,1}^{(h)}\|_{L_1(g^\star)}  =(1+o(1)) \int_{\mathcal{X}} \left|f_{\bpi^\star,\bpsi^\star}^{(h)''}[\delta_{\bpsi}][\delta_{\bpsi}](\bx) \right|d\bx.
$
Using the definition of $f_{\bpi^\star,\bpsi}^{(h)''} [\delta][\delta]$, we have
$$
|f_{\bpi^\star,\bpsi^\star}^{(h)''} [\delta_{\bpsi}][\delta_{\bpsi}](\bx) |\leq \sum_{k=1}^K  \sum_{\ell=1}^J \sum_{\ell'\neq \ell} 
|m_{a,\delta_{\bpsi},k,\ell,\ell'}(\bx)| + \sum_{k=1}^K  \sum_{\ell=1}^J |m_{b,\delta_{\bpsi},k,\ell}(\bx)|,
$$
where
$
m_{a,\delta_{\bpsi},k,\ell,\ell'}(\bx)= \epsilon_{a,\bpsi^\star,\delta_{\bpsi},k,\ell}^{(h)}(x_\ell)\epsilon_{a,\bpsi^\star,\delta_{\bpsi},k,\ell'}^{(h)}(x_{\ell'})  \left( \prod_{j\notin\{\ell,\ell'\}} \smooth_j \psi_{k,j}^\star(x_j)\right)
$ 
and
$
m_{b,\delta_{\bpsi},k,\ell,}(\bx)= \epsilon_{b,\bpsi^\star,\delta_{\bpsi},k,\ell}^{(h)}(x_\ell)  \left( \prod_{j\neq \ell} \smooth_j \psi_{k,j}^\star(x_j)\right).
$
Hence, we have
$$
\|\kappa_{\bpsi,1}^{(h)}\|_{L_1(g^\star)}  \lesssim \sum_{k=1}^K  \sum_{\ell=1}^J \sum_{\ell'\neq \ell} 
\|m_{a,\delta_{\bpsi},k,\ell,\ell'}\|_{L^1} + \sum_{k=1}^K  \sum_{\ell=1}^J \|m_{b,\delta_{\bpsi},k,\ell}\|_{L^1}.
$$

We now need to control the integrals of the absolute values of $m_{a,\delta_{\bpsi},k,\ell,\ell'}(\bx)$ and $m_{b,\delta_{\bpsi},k,\ell}(\bx)$. To do so, we need to investigate $\epsilon_{a,\bpsi^\star,\delta_{\bpsi},k,\ell}^{(h)}$ and $\epsilon_{b,\bpsi^\star,\delta_{\bpsi},k,\ell}^{(h)}$. Using the definition of $\epsilon_{a,\bpsi^\star,\delta_{\bpsi},k,\ell}^{(h)}$, we have
$
\|\epsilon_{a,\bpsi^\star,\delta_{\bpsi},k,\ell}^{(h)}\|_{L^1} \leq \int_{\mathcal{X}_j^2} \kernel(v) \left| \frac{\smooth_\ell \psi^\star_{k,\ell}(u)}{\psi^\star_{k,\ell}(u+vh)}\right| |\psi_{k,\ell}(u)-\psi^\star_{k,\ell}(u)|dudv
$. Hence, using the variable change $t=u+vh$, we have
$$
\|\epsilon_{a,\bpsi^\star,\delta_{\bpsi},k,\ell}^{(h)}\|_{L^1} \leq \int_{\mathcal{X}_j^2} \kernel(v) \left| \frac{\smooth_\ell \psi^\star_{k,\ell}(t-vh)}{\psi^\star_{k,\ell}(t)}\right| |\psi_{k,\ell}(t)-\psi^\star_{k,\ell}(t)|dtdv
$$
Using a Taylor expansion of $\smooth_\ell \psi^\star_{k,\ell}$ around $s$ and noting that the second order derivative de $\ln \psi^\star$ is bounded by $C_3$, we have
\begin{equation}\label{eq:Taylorratio}
\frac{\smooth_\ell \psi^\star_{k,\ell}(t-vh)}{\psi^\star_{k,\ell}(t)}  = \exp\left( -vh [\ln \psi^\star_{k,\ell}]'(t) + \frac{h^2}{2} \rho(v,t)\right),
\end{equation}
where $\|\rho(v,\cdot)\|_\infty\leq C_3 M_\kernel(v)$ where $M_\kernel(v) = \int_{\mathcal{X}_j} \kernel(w) (v+w)^2dw$. Hence, we have
$$
\|\epsilon_{a,\bpsi^\star,\delta_{\bpsi},k,\ell}^{(h)}\|_{L^1} \leq \int_{\mathcal{X}_j^2} \kernel(v) \exp\left( -vh [\ln \psi^\star_{k,\ell}]'(t) + \frac{h^2}{2}C_3 M_\kernel(v)\right) |\psi_{k,\ell}(t)-\psi^\star_{k,\ell}(t)|dtdv
$$
Due to the assumptions made on the kernel, we have $\int_{\mathcal{X}_j} \kernel(v) \exp\left(h^2C_3 M_\kernel(v)\right)dv=O(1)$ and 
that if $s$ are small enough, then $\mathbb{E}_\kernel[\exp(Vs)]\leq 1+O(s^2)$ leading that
$$
\int_{\mathcal{X}_j} \kernel(v) \exp\left(-2vh [\ln \psi^\star_{k,\ell}]'(t)\right)  \leq 1 + O((h[\ln \psi^\star_{k,\ell}]'(t))^2).
$$
Hence, Cauchy-Schwarz inequality implies that 
$$
\|\epsilon_{a,\bpsi^\star,\delta_{\bpsi},k,\ell}^{(h)}\|_{L^1} \lesssim  \int_{\mathcal{X}_j} (1+h|[\ln \psi^\star_{k,\ell}]'(t))|) |\psi_{k,\ell}(t)-\psi^\star_{k,\ell}(t)|dt.
$$
Using the definition of $\Psi(\mathcal{X}_j)$, the integral in the previous equation is upperbounded by $2C_2  \int_{\mathcal{X}_j} |[\ln \psi^\star_{k,\ell}]'(t))|dt$ and that $\int_{\mathcal{X}_j} |[\ln \psi^\star_{k,\ell}]'(t))|dt$ is finite, leading that
$$
\|\epsilon_{a,\bpsi^\star,\delta_{\bpsi},k,\ell}^{(h)}\|_{L^1} = O(\|\delta_{\bpsi,k,\ell}\|_{L^1}).
$$
Noting that $ \| \delta_{\bpsi,k,\ell}\|_{L_1}\| \delta_{\bpsi,k,\ell'}\|_{L_1}\leq  \| \delta_{\bpsi,k,\ell}\|_{L_1}^2+  \| \delta_{\bpsi,k,\ell'}\|_{L_1}^2 $, we have
\begin{equation}\label{eq:ma}
  \int_{\mathcal{X}} |m_{a,\delta_{\bpsi},k,\ell,\ell'}(\bx)| d\bx=     \sum_{k=1}^K  \sum_{\ell=1}^JO(\| \delta_{\bpsi,k,\ell}\|_{L_1}^2).
\end{equation}
Using the definition of $\epsilon_{b,\bpsi,\delta,k,\ell}^{(h)}$ given in \eqref{eq:epsilonb}, we have
$$
\epsilon_{b,\bpsi^\star,\delta_{\bpsi},k,\ell}^{(h)}(u) = [\smooth_\ell \psi_{k,\ell}^\star (u)]^{1/2}\text{Var}_\kernel\left[  \frac{\smooth_\ell \psi_{k,\ell}^\star (u)}{\psi_{k,\ell}^\star(u + Vh) } \delta_{\bpsi,k,\ell}(u + Vh) \right].
$$
Since the elements of $\Psi(\mathcal{X}_j)$ are bounded, then 
$$
\epsilon_{b,\bpsi^\star,\delta_{\bpsi},k,\ell}^{(h)}(u) \lesssim
\text{Var}_\kernel\left[ \exp\left( -vh [\ln \psi^\star_{k,\ell}]'(u)\right)\right].$$
In addition, since the kernel is Gaussian or sub-Gaussian, we have that if   $s$ is small enough, then $\text{Var}_\kernel[\exp(Vs)]\leq O(s^2)$ leading that
$$
\epsilon_{b,\bpsi^\star,\delta_{\bpsi},k,\ell}^{(h)}(u) \lesssim
\left(h [\ln \psi^\star_{k,\ell}]'(u)\right)^2.$$
Since the integral of $\left([\ln \psi^\star_{k,\ell}]'(u)\right)^2$ is finite by definition of  $\Psi(\mathcal{X}_j)$, then we have $\|\epsilon_{b,\bpsi^\star,\delta_{\bpsi},k,\ell}^{(h)}\|_{L^1}=O(h^2)$, 
leading that
\begin{equation}\label{eq:mb}
  \int_{\mathcal{X}} |m_{b,\delta_{\bpsi},k,\ell}(\bx)| d\bx=   O(h^2).
\end{equation}
Combining \eqref{eq:ma} and \eqref{eq:mb} gives
$$
\|\kappa_{\bpsi,1}^{(h)}\|_{L_1(g^\star)}  =   \sum_{k=1}^K  \sum_{\ell=1}^JO(\| \delta_{\bpsi,k,\ell}\|_{L_1}^2) +  O(h^2).
$$

\paragraph*{Controlling the $L_2(g^\star)$-norm of $\kappa_{\bpsi,2}^{(h)}$}
A Taylor expansion considering a remainder with integral form implies that 
$
 f_{\bpi^\star,\bpsi }^{(h)}  -  f_{\bpi^\star,\bpsi^\star}^{(h)} =  f_{\bpi^\star,\bpsi^\star}^{(h)'} [\delta_{\bpsi}]  +  f_{\bpi^\star,\bpsi }^{(h)} r_{2,\bpsi^\star,\bpsi}  
$
where 
$
 r_{2,\bpsi^\star,\bpsi} = \frac{1}{f_{\bpi^\star,\bpsi }^{(h)}} \left[ \int_{0}^1   f_{\bpi^\star,\bpsi_t}^{(h)'} [\delta_{\bpsi}] dt -  f_{\bpi^\star,\bpsi^\star}^{(h)'} [\delta_{\bpsi}] \right].
$
Divinding both sides of the previous equation  by $f_{\bpi^\star,\bpsi^\star}^{(h)}$ and using the definition of  $\kappa_{\bpsi,2}^{(h)}$ imply
$$
\kappa_{\bpsi,2}^{(h)} = \frac{ f_{\bpi^\star,\bpsi^\star}^{(h)'} [\delta_{\bpsi}]}{f_{\bpi^\star,\bpsi^\star}^{(h)} } +  r_{2,\bpsi^\star,\bpsi} .
$$
Since     $f_{\bpi^\star,\bpsi^\star}^{(h)'}$ is a continuous function of $\bpsi$ and that $ \|\frac{g^\star}{f_{\bpi^\star,\bpsi^\star}^{(h)}}\|_\infty=O(1)$, then we have
$$
\|\kappa_{\bpsi,2}^{(h)}\|_{L_2(g^\star)}^2   \lesssim  \int_{\mathcal{X}}  \left|f_{\bpi^\star,\bpsi^\star}^{(h)'} [\delta_{\bpsi}](\bx)\right|\frac{|f_{\bpi^\star,\bpsi^\star}^{(h)'} [\delta_{\bpsi}](\bx)|}{f_{\bpi^\star,\bpsi^\star}^{(h)}(\bx)} d\bx.
$$
Note that 
$$
\frac{|f_{\bpi^\star,\bpsi_t}^{(h)'} [\delta_{\bpsi}](\bx)|}{f_{\bpi^\star,\bpsi^\star}^{(h)}(\bx)} \leq \sum_{k=1}^K |\chi_{1,\bpsi^\star,\delta_{\bpsi},k}^{(h)}(\bx)|.
$$
Hence, there exists a positive constant $C$ such that
$$
\|\kappa_{\bpsi,2}^{(h)}\|_{L_2(g^\star)}^2   \lesssim   \int_{\mathcal{X}} \sum_{k=1}^K  \sum_{k'=1}^K \pi_k^\star  \left(\prod_{j=1}^J \smooth_j \psi_{k,j}^\star (x_j)\right)[\chi_{1,\bpsi^\star,\delta_{\bpsi},k'}^{(h)}(\bx)]^2d \bx
$$
Hence, we have $
\|\kappa_{\bpsi,2}^{(h)}\|_{L_2(g^\star)}^2   \lesssim   \sum_{k=1}^K  \sum_{\ell=1}^J \sum_{\ell'\neq \ell} 
\int_{\mathcal{X}} |m_{a,\delta_{\bpsi},k,\ell,\ell'}(\bx)|  d\bx
$, then using \eqref{eq:ma}, we have
$$
\|\kappa_{\bpsi,2}^{(h)}\|_{L_2(g^\star)}^2    = \sum_{k=1}^K  \sum_{\ell=1}^JO(\| \delta_{\bpsi,k,\ell}\|_{L_1}^2).
$$

\paragraph*{Controlling the $L_2(g^\star)$-norm of $\kappa_{\bpsi,3}^{(h)}$}
Using the definition of $ \dot{\bmap}^{(h)}_k$, we have for any $k$
$$
\dot{\bmap}^{(h)}_k(\bpi^\star,\bpi^\star,\bpsi)  = \score_{\bpi^\star,\bpsi,k}^{(h)}  - \sum_{k'=1}^K \pi_{k'}^\star \score_{\bpi^\star,\bpsi,k'}^{(h)} \phi^{(h)}_{\bpi^\star,\bpi^\star,\bpsi,k'} .
$$
Let  $\dot{\bmap}^{(h)'}_k(\bpi^\star,\bpi^\star,\bpsi) [\delta] $ be the derivative of $\bpsi\mapsto\dot{\bmap}^{(h)}_k(\bpi^\star,\bpi^\star,\bpsi)$ in direction $\delta$ at $\bpsi$. We have
$$
\dot{\bmap}^{(h)'}_k(\bpi^\star,\bpi^\star,\bpsi ) [\delta]=    \score_{\bpi^\star,\bpsi,k}^{(h)'}  [\delta] -  \sum_{k'=1}^K \pi_{k'}^\star\left(  \score_{\bpi^\star,\bpsi,k'}^{(h)'} [\delta] \phi^{(h)}_{\bpi^\star,\bpi^\star,\bpsi,k'} +\score_{\bpi^\star,\bpsi,k'}^{(h)}    \phi^{(h)'}_{\bpi^\star,\bpi^\star,\bpsi,k'}[\delta]\right),
$$
where $ \score_{\bpi^\star,\bpsi, k}^{(h)'}  [\delta]$ and $   \phi^{(h)'}_{\bpi^\star,\bpi^\star,\bpsi,k}[\delta]$ are the partial derivatives of $\bpsi\mapsto \score_{\bpi^\star,\bpsi,k}^{(h)} $ and $\bpsi\mapsto \phi^{(h)}_{\bpi^\star,\bpi^\star,\bpsi  ,k}$ in direction $\delta$ evaluated at $\bpsi$. Hence, we have
$$
   \score_{\bpi^\star,\bpsi,k}^{(h)'}  [\delta] =
\score_{\bpi^\star,\bpsi,k}^{(h)} \left(\chi_{1,\bpsi,\delta,k}^{(h)} -  \sum_{k'=1}^K \pi_{k'}^\star\score_{\bpi^\star,\bpsi,k'}^{(h)} \chi_{1,\bpsi,\delta,k'}^{(h)} \right)
$$
and
$$  \phi^{(h)'}_{\bpi^\star,\bpi^\star,\bpsi,k}[\delta] = 
\chi_{1,\bpsi,\delta,k}^{(h)} - \sum_{j=1}^J \kernel_h \star  
\frac{(\psi_{k,j}  - \check\psi_{k,j})\delta_{k,j}}{\psi_{k,j}^2}.
$$
Hence, using Lemma~\ref{lem:control_sup_normdev}, we have
\begin{equation}\label{eq:phiprime}
| \phi^{(h)'}_{\bpi^\star,\bpi^\star,\bpsi^\star,k}[\delta_{\bpsi}]|\lesssim   |\chi_{1,\bpsi^\star,\delta_{\bpsi},k}^{(h)}|.
\end{equation}
From the definition of $\kappa_{\bpsi,3,k}^{(h)}$, a Taylor expansion considering a remainder with integral form implies that 
$$
\kappa_{\bpsi,3,k}^{(h)} =  \dot{\bmap}^{(h)'}_k(\bpi^\star,\bpi^\star,\bpsi^\star ) [\delta_{\bpsi}] + r_{3,\bpsi^\star,\bpsi},$$
where the reminder term is equal to
$$
r_{3,\bpsi^\star,\bpsi}=\int_{0}^1  \left( \dot{\bmap}^{(h)'}_k(\bpi^\star,\bpi^\star,\bpsi_t ) [\delta_{\bpsi}] -  \dot{\bmap}^{(h)'}_k(\bpi^\star,\bpi^\star,\bpsi^\star ) [\delta_{\bpsi}]\right) dt.
$$
Since $\score_{\bpi^\star,\bpsi,k}^{(h)}  [\delta_{\bpsi}]$, $   \score_{\bpi^\star,\bpsi,k}^{(h)'}  [\delta_{\bpsi}]$, $\phi^{(h)}_{\bpi^\star,\bpi^\star,\bpsi,k}$ and $\phi^{(h)'}_{\bpi^\star,\bpi^\star,\bpsi,k}$ are continuous function of $\bpsi$ for any $k$, we have that  $\dot{\bmap}^{(h)'}_k(\bpi^\star,\bpi^\star,\bpsi)$ is a continuous function of $\bpsi$, leading that
$$
\|r_{3,\bpsi^\star,\bpsi}\|_{L^2(g^\star)} = o\left( \|  \dot{\bmap}^{(h)'}_k(\bpi^\star,\bpi^\star,\bpsi^\star ) [\delta_{\bpsi}]  \|_{L^2(g^\star)} \right).
$$
We have
\begin{multline*}
     \left\|   \dot{\bmap}^{(h)'}_k(\bpi^\star,\bpi^\star,\bpsi^\star) ) [ \delta_{\bpsi}] \right\|_{L_2(g^\star)} \leq   \left\|   \score_{\bpi^\star,\bpsi^\star,k}^{(h)'}  [ \delta_{\bpsi}] \right\|_{L_2(g^\star)} +\\ \sum_{k'=1}^K \pi_{k'}^\star\left(    \left\| \score_{\bpi^\star,\bpsi^\star,k'}^{(h)'} [\delta_{\bpsi}] \phi^{(h)}_{\bpi^\star,\bpi^\star,\bpsi^\star,k'} \right\|_{L_2(g^\star)} +   \right. \left. \left\| \score_{\bpi^\star,\bpsi^\star,k'}^{(h)}     \phi^{(h)'}_{\bpi^\star,\bpi^\star,\bpsi^\star,k'}[\delta_{\bpsi}] \right\|_{L_2(g^\star)}\right).
\end{multline*}
We have
\begin{align*}
     \left\|   \score_{\bpi^\star,\bpsi^\star,k}^{(h)'}  [ \delta_{\bpsi}] \right\|_{L_2(g^\star)}  &\leq \sum_{\ell=1}^K  \left\|   \score_{\bpi^\star,\bpsi^\star,k}^{(h)}  [ \delta_{\bpsi}] \chi_{1,\bpsi^\star,\delta_{\bpsi},\ell}^{(h)}\right\|_{L_2(g^\star)} \\
     &\leq \sum_{\ell=1}^K \left[\int_{\mathcal{X}}   \left(\prod_{j=1}^J \smooth_j \psi_{\ell,j}^\star (x_j)\right)^2 \left(\chi_{1,\bpsi^\star,\delta_{\bpsi},\ell}^{(h)}(\bx)\right)^2 d\bx\right]^{1/2}.
\end{align*}
Since $\smooth_j \psi_{\ell,j}^\star $ is  bounded, we have
$
     \left\|   \score_{\bpi^\star,\bpsi^\star,k}^{(h)'}  [ \delta_{\bpsi}] \right\|_{L_2(g^\star)}  \leq
\sum_{k=1}^K  \sum_{\ell=1}^J \sum_{\ell'\neq \ell} 
\int_{\mathcal{X}} |m_{a,\delta_{\bpsi},k,\ell,\ell'}(\bx)|  d\bx$,
leading using \eqref{eq:ma}, we have
$$
   \left\|   \score_{\bpi^\star,\bpsi^\star,k}^{(h)'}  [ \delta_{\bpsi}] \right\|_{L_2(g^\star)}^2    = \sum_{k=1}^K  \sum_{\ell=1}^JO(\| \delta_{\bpsi,k,\ell}\|_{L_1}^2).
$$
Noting that by definition of $\Psi(\mathcal{X})$, $ \phi^{(h)}_{\bpi^\star,\bpi^\star,\bpsi^\star,k'}$ is upperbounded, then 
$$
  \left\| \score_{\bpi^\star,\bpsi^\star,k'}^{(h)'} [\delta_{\bpsi}] \phi^{(h)}_{\bpi^\star,\bpi^\star,\bpsi^\star,k'} \right\|_{L_2(g^\star)}  = \sum_{k=1}^K  \sum_{\ell=1}^JO(\| \delta_{\bpsi,k,\ell}\|_{L_1}^2).
$$
From \eqref{eq:phiprime}, we have
$$
 \left\| \score_{\bpi^\star,\bpsi^\star,k'}^{(h)}     \phi^{(h)'}_{\bpi^\star,\bpi^\star,\bpsi^\star,k'}[\delta_{\bpsi}] \right\|_{L_2(g^\star)} \lesssim  \left\| \score_{\bpi^\star,\bpsi^\star,k'}^{(h)}     \chi_{1,\bpsi^\star,\delta_{\bpsi},\ell}^{(h)} \right\|_{L_2(g^\star)}.
$$
Therefore, with the same argument that thoses used to control $\left\|   \score_{\bpi^\star,\bpsi^\star,k}^{(h)'}  [ \delta_{\bpsi}] \right\|_{L_2(g^\star)} $, we obtain that
$$
 \left\| \score_{\bpi^\star,\bpsi^\star,k'}^{(h)}     \phi^{(h)'}_{\bpi^\star,\bpi^\star,\bpsi^\star,k'}[\delta_{\bpsi}] \right\|_{L_2(g^\star)} = \sum_{k=1}^K  \sum_{\ell=1}^JO(\| \delta_{\bpsi,k,\ell}\|_{L_1}^2).
$$
Hence, we have
$$
\|\kappa_{\bpsi,3}^{(h)}\|^2_{L^2(g^\star)} = \sum_{k=1}^K  \sum_{\ell=1}^JO(\| \delta_{\bpsi,k,\ell}\|_{L_1}^2).
$$
\end{proof}

\section{Extension to the variables defined on the real line}

\begin{proof}[Proof of Theorem~\ref{thm:normabis}]
    With a careful reading of the proof of Lemma~\ref{lem:taylor}, we can see that the compactness of $\mathcal{X}_j$ is not used. Therefore, Lemma~\ref{lem:taylor} still hold true when $\mathcal{X}_j=\mathbb{R}$.
    Lemma~\ref{lem:deviation} uses the argument of compactness of $\mathcal{X}_j$ and thus cannot be used anymore. It is replaced by Lemma~\ref{lem:deviationbis}.
    Hence, we are able to state the consistency of the estimator. Indeed, following the same steps that the proof of Theorem~\ref{thm:consistence}, we have 
$$|\loss^{(0)}(\hpi,\hpsi) - \loss^{(0)}(\bpi^\star,\bpsi^\star)| = o_\mathbb{P}(1).$$
by noting that combing Lemmas~\ref{lem:taylor} and~\ref{lem:deviationbis} provides
$$
\sup_{(\bpi,\bpsi)\in\Theta_K}|\lossnh(\bpi,\bpsi) - \loss^{(0)}(\bpi,\bpsi)| = O_\mathbb{P}(n^{-1/2}h^{-1/4} + h^2).
$$
By the Fréchet–Kolmogorov theorem, the equicontinuity and uniform boundedness of  $\Psi(\mathbb{R})$ in the $L_2(g^\star)$-norm ensure that every sequence in 
 $\Psi(\mathbb{R})$ has a convergent subsequence; hence,  $\Psi(\mathbb{R})$ is sequentially compact in $L_2(g^\star)$. Therefore, the parameter space $\widetilde\Theta_K$ is sequentially compact in $L_2(g^\star)$.  Suppose, for the sake of contradiction, that $(\hpi,\hpsi)$ does not converge to $(\bpi^\star,\bpsi^\star)$  in probability for the $L_2(g^\star)$-norm. As the parameter space $\widetilde{\Theta}_K$ is sequentially compact, one can find a subsequence $(\hat\bpi_{h,n_k},\hat\bpsi_{h,n_k})_k$ which converges in probability for the $L_2(g^\star)$-norm to some $\tilde\btheta=(\widetilde\bpi,\widetilde\bpsi)$ such that $\|\btheta^\star-\tilde\btheta\|_{L_2(g^\star)}\neq 0$. By the continuity of $\loss^{(0)}$, $\loss^{(0)}(\hat\bpi_{h,n_k},\hat\bpsi_{h,n_k})$ converges in probability to $\loss^{(0)}(\widetilde\bpi,\widetilde\bpsi)$. On the other hand, by \eqref{eq:cvprloss}, $\loss^{(0)}(\hat\bpi_{h,n_k},\hat\bpsi_{h,n_k})$ converges in probability to $\loss^{(0)}(\bpi^\star,\bpsi^\star)$. Therefore, we have $\loss^{(0)}(\bpi^\star,\bpsi^\star) = \loss^{(0)}(\widetilde\bpi,\widetilde\bpsi)$. This contradicts the parameter identifiability property ensured by Assumption~\ref{ass:controlvariancetrue}, which implies that $(\bpi^\star,\bpsi^\star)$ is the unique minimizer of $\loss^{(0)}$. Therefore, $(\hpi,\hpsi)$ converges in probability to $(\bpi^\star,\bpsi^\star)$ for the $L_2(g^\star)$-norm.
Now note that Lemmas~\ref{lem:unicity}, \ref{lem:prel} and \ref{lem:inequality} do not use the argument of compactness of $\mathcal{X}_j$ and thus they still hold true when $\mathcal{X}_j=\mathbb{R}$. With a careful reading of the proof of Theorem~\ref{thm:rateL1}, and by replacing the callings of Lemma~\ref{lem:deviation} by the callings of Lemma~\ref{lem:deviationbis}. We have that under Assumptions~\ref{ass:controlvariancetrue} and \ref{ass:controlvariancekernel}, 
$$
\forall \bpi\in\mathcal{B}(\bpi^\star),\; \sum_{k=1}^K \sum_{j=1}^J \| \psi^\star_{k,j} - \widehat{\psi}^{(h,n,\bpi)}_{k,j}\|^2_1 = O_\mathbb{P}( n^{-1/2}h^{-1/2}+h^2+\|\bpi - \bpi^\star\|_1).
$$
Lemma~\ref{lem:invert} is only true for density functions $\psi_{k,j}^\star$ defined on compact sets. Using a quantile transformation as suggested in the Conclusion section of \cite{gassiat2018efficient}, a similar result can be established for some marginal densities whose support is defined on the real line. For example, the result will still be true for marginal densities with tails decaying at the same polynomial order in the same dimension as stated by Assumptions~\ref{ass:poly}. This result cannot be extended, however, to many other marginal densities.   
The other arguments used in the proof of Theorem~\ref{thm:norma} do not use the argument of compactness of $\mathcal{X}_j$. Therefore, under Assumptions~\ref{ass:controlvariancetrue}, \ref{ass:controlvariancekernel} and \ref{ass:band1}, the estimator of the proportions $\hpi$ converges at the rate $n^{-r}$ such that
    $$
    \|\hpi - \bpi^\star\|_1=O_\mathbb{P}(n^{-r}).
    $$
\end{proof}

\begin{lemma} \label{lem:deviationbis}
     Under Assumption~\ref{ass:controlvariancekernel}, the properties of $\widetilde{\Theta}_K$ ensures that
 $$
\sup_{(\bpi,\bpsi) \in \Theta_K} | \lossnh(\bpi,\bpsi) -  \lossh(\bpi,\bpsi)|= O_\mathbb{P}(n^{-1/2}h^{-1/2}).
 $$
\end{lemma}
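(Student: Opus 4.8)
The plan is to reprise the argument of Lemma~\ref{lem:deviation} almost verbatim, isolating the single place where the compactness of $\mathcal{X}_j$ was used and replacing it by an argument adapted to the definition of $\widetilde\Psi(\mathbb{R})$. Recall that in Lemma~\ref{lem:deviation} compactness entered only when passing from the uniform bounds $\|\gamma^{(h)}\|_\infty\le C_1$ and $\|\gamma^{(h)\prime}\|_\infty\lesssim h^{-1}$ on an element $\gamma^{(h)}=\smooth_j\psi$ of the image class $\Gamma^{(h)}(\mathcal{X}_j)$ to the corresponding Lebesgue $L_2$ bounds (by multiplying by the finite length of $\mathcal{X}_j$), which then yielded the inclusion $\Gamma^{(h)}(\mathcal{X}_j)\subseteq\mathcal{W}^{1,2,\tilde C_1+\tilde C_2 h^{-1/2}}(\mathcal{X}_j)$ and the entropy bound $\mathcal{H}(\varepsilon;\Gamma^{(h)}(\mathcal{X}_j),\|\cdot\|_\infty)\lesssim (\varepsilon h)^{-1}$. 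On $\mathbb{R}$ the Lebesgue norm of $\gamma^{(h)}$ is no longer finite, so I would instead work throughout with the $g^\star$-weighted $L_2$ norm, which is the norm naturally governing the empirical process $n^{-1/2}\sum_i(\ln f^{(h)}_{\bpi,\bpsi}(\bX_i)-\Eg\ln f^{(h)}_{\bpi,\bpsi})$ since the observations are drawn from $g^\star$, and which is precisely the norm used by the maximal inequality I will invoke.

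First I would note that $\|\gamma^{(h)}\|_\infty\le C_1$ continues to hold on $\mathbb{R}$, as its proof only uses $\psi\le C_1$ and $\int\kernel_h=1$; consequently $\|\gamma^{(h)}\|_{L_2(g^\star)}\le C_1$ trivially, because $g^\star$ integrates to one. The substantive step is to control $\|\gamma^{(h)\prime}\|_{L_2(g^\star)}$. Writing $\gamma^{(h)\prime}(u)=\gamma^{(h)}(u)\,h^{-1}\int\kernel'(v)\ln\psi(u-vh)\,dv$, applying Cauchy--Schwarz in the measure $|\kernel'(v)|dv$ and interchanging the order of integration, one is led to bound $\int\!\!\int|\kernel'(v)|(\ln\psi(u-vh))^2 g^\star(u)\,dv\,du$, i.e. after the change of variables $t=u-vh$, the quantity $\int|\kernel'(v)|\big(\int(\ln\psi(t))^2 g^\star(t+vh)\,dt\big)dv$. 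Using Assumption~\ref{ass:controlvariancekernel}.4 (the kernel is Gaussian or sub-Gaussian, so $\kernel'$ has rapidly decaying tails and the effective shift $vh$ is small) together with a comparison of $g^\star$ to its small shifts $g^\star(\cdot+vh)$, this reduces to $\|\ln\psi\|_{L_2(g^\star)}^2\le C_2^2$, the constraint built into $\widetilde\Psi(\mathbb{R})$, giving $\|\gamma^{(h)\prime}\|_{L_2(g^\star)}\lesssim h^{-1}$. This places $\Gamma^{(h)}(\mathbb{R})$ inside a $g^\star$-weighted Sobolev ball of radius of order $h^{-1/2}$, whose $L_2(g^\star)$-bracketing entropy obeys $\mathcal{H}(\varepsilon;\Gamma^{(h)}(\mathbb{R}),\|\cdot\|_{L_2(g^\star)})\lesssim (\varepsilon h)^{-1}$ by the weighted analogue of \cite[Theorem 2.7.1]{van1996weak}. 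The same bound transfers to $T_h(\mathbb{R})=\{\ln f^{(h)}_{\bpi,\bpsi}:(\bpi,\bpsi)\in\widetilde\Theta_K\}$ exactly as in Lemma~\ref{lem:deviation}, so that $\int_0^\delta \mathcal{H}^{1/2}(\varepsilon;T_h(\mathbb{R}),\|\cdot\|_{L_2(g^\star)})\,d\varepsilon\lesssim h^{-1/2}\delta$; invoking \cite[Lemma 19.38]{van2000asymptotic} then gives $\Eg\sup_{(\bpi,\bpsi)}|n^{-1/2}\sum_i(\ln f^{(h)}_{\bpi,\bpsi}(\bX_i)-\Eg\ln f^{(h)}_{\bpi,\bpsi})|=O(h^{-1/2})$, and Markov's inequality yields the claimed $O_\mathbb{P}(n^{-1/2}h^{-1/2})$.

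The main obstacle is precisely this weighted derivative bound on the unbounded domain: in the compact case the passage from the $L_\infty$ bound to the $L_2$ bound was a one-line consequence of finite Lebesgue measure, whereas on $\mathbb{R}$ one must reconcile the weight $g^\star(u)$ appearing in the norm with the shifted argument $\ln\psi(u-vh)$, equivalently control the ratio $g^\star(t+vh)/g^\star(t)$ uniformly enough after the change of variables. I expect this to require the sub-Gaussian decay of $\kernel'$ to confine the shift, possibly combined with a truncation of $\mathbb{R}$ into a compact core---where the argument of Lemma~\ref{lem:deviation} applies verbatim and $g^\star$ is bounded below---and a tail region where the uniform bound $\gamma^{(h)}\le C_1$ together with $\int_{|x|>T}g^\star\to 0$ renders the $L_2(g^\star)$-bracketing contribution negligible. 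Everything else in the proof of Lemma~\ref{lem:deviation} is insensitive to whether $\mathcal{X}_j$ is compact and carries over unchanged.
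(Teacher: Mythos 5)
Your overall skeleton (a bracketing-entropy bound for the class $\{\ln f^{(h)}_{\bpi,\bpsi}\}$ in $\|\cdot\|_{L_2(g^\star)}$, then the maximal inequality of \cite[Lemma 19.38]{van2000asymptotic}, then Markov) is the same as the paper's, and you correctly isolated the entropy computation as the only step where compactness matters. But the mechanism you propose for that step contains a genuine gap. You replace the sup-norm derivative bound by a weighted bound $\|\gamma^{(h)\prime}\|_{L_2(g^\star)}\lesssim h^{-1}$ and then invoke a ``weighted analogue of \cite[Theorem 2.7.1]{van1996weak}'' to conclude $\mathcal{H}(\varepsilon;\Gamma^{(h)}(\mathbb{R}),\|\cdot\|_{L_2(g^\star)})\lesssim(\varepsilon h)^{-1}$. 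No such analogue exists, and the claimed bound is false in the generality needed here: when the smoothness and the entropy are measured in the \emph{same} weighted norm, the weight cancels and the compactness of the embedding is lost. Concretely, on a region where $g^\star\approx c$ is small, a function bounded by $C_1$ can oscillate between $\pm C_1$ on roughly $N\asymp h^{-1}/\sqrt{c}$ subintervals at weighted-derivative cost $O(h^{-1})$, producing of order $2^{N}$ functions that are mutually $\varepsilon$-separated in $L_2(g^\star)$ with $\varepsilon\asymp\sqrt{c}$; summing these contributions over the moderate-tail region shows that for densities with polynomial tails the entropy strictly exceeds $(\varepsilon h)^{-1}$, and for Cauchy-type tails (which are admitted by $\widetilde\Psi(\mathbb{R})$, since $(\ln\psi)''$ is bounded there) the resulting entropy integral diverges. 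This is precisely why the paper does \emph{not} pass to weighted smoothness: it keeps the two sup-norm bounds $\|\gamma^{(h)}\|_\infty\le C_1$ and $\|\gamma^{(h)\prime}\|_\infty\lesssim h^{-1}$, which remain valid on $\mathbb{R}$, and replaces the compact-domain Sobolev entropy theorem by the result of \cite{van1994bracketing}, whose purpose is exactly to bound $L_2(Q)$-bracketing entropy of classes that are smooth in the \emph{unweighted} sup norm on an unbounded domain, the decay of $Q=g^\star\,d\bx$ relative to that unweighted smoothness being what renders the entropy finite.

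There is a second unresolved point, which you yourself flag as the ``main obstacle'': your derivation of $\|\gamma^{(h)\prime}\|_{L_2(g^\star)}\lesssim h^{-1}$ requires bounding $\int|\kernel'(v)|\int(\ln\psi(t))^2\,g^\star(t+vh)\,dt\,dv$ by a multiple of $\|\ln\psi\|^2_{L_2(g^\star)}$, i.e.\ controlling the shifted-weight ratio $g^\star(t+vh)/g^\star(t)$. This ratio is not uniformly bounded on $\mathbb{R}$ (for Gaussian-type tails it diverges as $|t|\to\infty$ for any fixed shift), and the sub-Gaussianity of $\kernel$ alone does not repair this without additional assumptions linking the tails of $g^\star$ to the growth of $\ln\psi$ --- assumptions not available in this lemma. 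Your truncation fallback does not rescue the argument either: inside the truncation region you still only control the derivative in the weighted norm, and converting it to a Lebesgue or sup bound costs a factor that blows up as the truncation level must shrink with $\varepsilon$, reproducing the entropy problem above. The repair is to abandon weighted smoothness altogether: establish the sup-norm bounds on $\mathbb{R}$ (which follow from $\psi\le C_1$ and $\|(\ln\psi)''\|_\infty\le C_3$) and let the weight $g^\star$ enter only through the bracketing norm, via the partition-into-cubes argument of \cite{van1994bracketing}, which is the route the paper takes.
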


\begin{proof}[Proof of Lemma~\ref{lem:deviationbis}]
With the same arguments that those used in the proof of Lemma~\ref{lem:deviation}, we have
     $\sup_{\gamma^{(h)} \in \Gamma^{(h)}(\mathbb{R})}     \|\gamma^{(h)}\|_\infty \leq  C_1$
      and
     $\sup_{\gamma^{(h)} \in \Gamma^{(h)}(\mathbb{R})}     \|\gamma^{(h)'}\|_\infty =\bar C_2 h^{-1}$.
     Hence, using \cite{van1994bracketing}, we have
     $
 \mathcal    H(\varepsilon;  \Gamma^{(h)}(\mathbb{R}),\|.\|_{L_2(g^\star)}) \lesssim 1/(\varepsilon h).
     $
     Therefore, the $\varepsilon$-entropy with bracketing of the $J$-dimensional product space $\Gamma^{(h)}(\mathbb{R}^J)=\Gamma^{(h)}(\mathcal{X}_1)\times \ldots \times \Gamma^{(h)}(\mathbb{R})$ is
     $$
 \mathcal    H(\varepsilon;  \Gamma^{(h)}(\mathbb{R}^J),\|.\|_{L_2(g^\star)}) \lesssim \frac{1}{\varepsilon h} .
     $$
Let $\tau^{(h)}_{\bpi,\bpsi}=\ln f^{(h)}_{\bpi,\bpsi}$, considering the space
     $
     \widetilde T_{h}(\mathcal{X})=\{\tau^{(h)}_{\bpi,\bpsi},\, (\bpi, \bpsi)\in\widetilde\Theta_K\},
     $
     we have
$
 \mathcal    H(\varepsilon;  \widetilde T_h(\mathbb{R}^d),\|.\|_{L_2(g^\star)} \lesssim \frac{1}{\varepsilon h}$. 
 Note that the class $\widetilde T_h(\mathbb{R}^d)$ admits an envelop having a finite $L_2(g^\star)$-norm since the elements of $\widetilde T_h(\mathbb{R}^d)$ are bounded and $g^\star$ is strictly positive and bounded. Hence, noting that 
            $     \int_0^\delta H^{1/2}(\varepsilon;  \widetilde T_h(\mathbb{R}^d),\|.\|_{L_2(g^\star)}) d\varepsilon \lesssim h^{-1/2}\delta$,
     then using \cite[Lemma 19.38]{van2000asymptotic}, we have
     $$
     \Eg \left[\sup_{(\bpi,\bpsi)\in\Theta_K(\mathcal{X})} \left|\frac{1}{n^{1/2}} \sum_{i=1}^n \ln f^{(h)}_{\bpi,\bpsi}(\bX_i) - \Eg\ln \ f^{(h)}_{\bpi,\bpsi}(\bX_i) \right|\right]=O(h^{-1/2}).
     $$
     The proof is concluded by noting that for any $(\bpi,\bpsi)$, we have $$ \lossnh(\bpi,\bpsi) -  \lossh(\bpi,\bpsi)=n^{-1/2}\left|\frac{1}{n^{1/2}} \sum_{i=1}^n \ln  f^{(h)}_{\bpi,\bpsi}(\bX_i) - \Eg\ln  f^{(h)}_{\bpi,\bpsi}(\bX_i) \right|,$$
     then by applying Markov's inequality.  
\end{proof}

 \end{appendix}
\end{document}